\newtheorem{theorem}{Theorem}
\newtheorem*{theorem*}{Theorem}
\newtheorem{lemma}{Lemma}
\newtheorem{proposition}{Proposition}
\newtheorem*{proposition*}{Proposition}
\newcommand{\Mk}{\mathcal{M}_k}
\newcommand{\Mkk}{\mathcal{M}_{k-1}}
\newcommand{\Hk}{\mathcal{H}_k}
\newcommand{\Rm}{\mathbb{R}^m}
\newcommand{\C}{\mathbb{C}}
\newcommand{\Clm}{\mathcal{C}l_m}
\newcommand{\scs}{\mathcal{S}}
\newcommand{\Sm}{\mathbb{S}^{m-1}}
\newcommand{\udx}{\langle u,D_x\rangle}
\newcommand{\dudx}{\langle D_u,D_x\rangle}
\newcommand{\fone}{\mathfrak{f}_1}
\newcommand{\Dtwo}{\mathcal{D}_2}
\newcommand{\Deven}{\mathcal{D}_{2j}}
\newcommand{\Dodd}{\mathcal{D}_{2j-1}}
\begin{document}

\title{Construction of Arbitrary Order Conformally Invariant Operators in Higher Spin Spaces}

\author{Chao Ding$^1$\thanks{Electronic address:  {\tt dchao@uark.edu}.}, Raymond Walter$^{1,2}$\thanks{Electronic address:  {\tt rwalter@email.uark.edu}; R.W. acknowledges this material is based upon work supported by the National Science Foundation Graduate Research Fellowship Program under Grant No. DGE-0957325 and the University of Arkansas Graduate School Distinguished Doctoral Fellowship in Mathematics and Physics.}, and John Ryan$^1$\thanks{Electronic address: {\tt jryan@uark.edu.}} \\
\emph{\small $^1$Department of Mathematics, University of Arkansas, Fayetteville, AR 72701, USA} \\ 
\emph{\small $^2$Department of Physics, University of Arkansas, Fayetteville, AR 72701, USA}}
\date{}

\maketitle

\begin{abstract}
This paper completes the construction of arbitrary order conformally invariant differential operators in higher spin spaces. Jan Slov\'{a}k has classified all conformally invariant differential operators on locally conformally flat manifolds. We complete his results in higher spin theory in Euclidean space by giving explicit expressions for arbitrary order conformally invariant differential operators, where by conformally invariant we mean equivariant with respect to the conformal group of $S^m$ acting in Euclidean space $\mathbb{R}^m$. We name these the fermionic operators when the order is odd and the bosonic operators when the order is even. Our approach explicitly uses convolution type operators to construct conformally invariant differential operators. These convolution type operators are examples of Knapp-Stein operators and they can be considered as the inverses of the corresponding differential operators. Intertwining operators of these convolution type operators are provided and intertwining operators of differential operators follow immediately. This reveals that our convolution type operators and differential operators are all conformally invariant. This also gives us a class of conformally invariant convolution type operators in higher spin spaces. Their inverses, when they exist, are conformally invariant pseudo-differential operators. Further we use Stein Weiss gradient operators and representation theory for the Spin group to naturally motivate the construction of Rarita-Schwinger operators.
\end{abstract}
{\bf Keywords:}\quad Fermionic operators, Bosonic operators, Conformal invariance, Fundamental solutions, Intertwining operators, Convolution type operators, Knapp-Stein operators.\\
{\bf AMS subject classification:}\quad Primary 53A30, secondary 20G05, 30G35
\section{Introduction}\hspace*{\fill} 
\par
The \emph{higher spin theory} in Clifford analysis was first introduced with the Rarita-Schwinger operators \cite{B}. This theory considers generalizations of classical Clifford analysis techniques to higher spin spaces \cite{B1, Br1, B, D, E, Li}, focusing on operators acting on functions on $\Rm$ that take values in arbitrary irreducible representations of $Spin(m)$. Generally these are polynomial representations, such as spaces of $k$-homogeneous monogenic or harmonic polynomials ($\Mk$ or $\Hk$) corresponding to particles of half-integer spin or integer spin. Here monogenic functions are solutions to the Euclidean Dirac equation. \\
\par
After the Laplacian was pointed out no longer to be conformally invariant in higher spin space, Eelbode and Roels \cite{E} constructed a second order conformally invariant operator: the (generalized) Maxwell operator acting on $C^{\infty}(\Rm,\mathcal{H}_1)$, where the target space $\mathcal{H}_1$ is a degree-$1$ homogeneous harmonic polynomial space. De Bie and his co-authors \cite{B1} then generalized this Maxwell operator to the case when it acts on $C^{\infty}(\Rm,\mathcal{H}_k)$. This is what they call the higher spin Laplace operator. In \cite{Ding1}, we introduced fermionic operators and bosonic operators as the generalization of $k$-th powers of the Euclidean Dirac operator to higher spin space, considering the special case of target space of degree-$1$ homogeneous polynomials while using similar techniques as in \cite{B1,E}. The connections to mathematical physics emphasized in that work also apply to the present manuscript. We later constructed the 3rd order fermionic and 4th order bosonic operators when the target space is a degree-$k$ homogeneous polynomial space \cite{Ding}. Unfortunately, the generalized symmetry approach we used in \cite{Ding,Ding1} was computationally infeasible for arbitrary higher order conformally invariant operators. \\
\par
The methods we use to construct conformally invariant operators are usually either of the following type. 
\begin{enumerate}
\item Verify some differential operator is conformally invariant under M\"{o}bius transformations with the help of the Iwasawa decomposition of the M\"{o}bius transformation, for instance as in \cite{D}.
\item Show the generalized symmetries of some differential operator generate a conformal Lie algebra, for instance as in \cite{B1,E}. 
\end{enumerate}
\par
This paper uses a method different from these. We start by applying Slov\'{a}k \cite{J} and Sou\v{c}ek's \cite{Vlad} results with arguments of Bure\v{s} et al. \cite{B} to get fundamental solutions of arbitrary order conformally invariant differential operators in higher spin spaces. Then we only need to  construct differential operators with those specific fundamental solutions. In particular, from the fundamental solutions of first and second order conformally invariant differential operators obtained from the preceding argument, we can also find the Rarita-Schwinger operators \cite{B} and higher spin Laplace operators \cite{B1} by verifying they have such fundamental solutions. Arguing by induction, we then complete the work on constructing conformally invariant operators in higher spin spaces by providing explicit forms of arbitrary $j$-th order conformally invariant operators in higher spin spaces with $j>2$.\\
\par
Notably, we discover a new analytic approach to show that a differential operator is conformally invariant. More specifically, we use its fundamental solution to define a convolution type operator and then the fundamental solution can be realized as the inverse of the corresponding differential operator in the sense of such convolution. Hence, if we can show the fundamental solution (as a convolution operator) is conformally invariant, then as the inverse, the corresponding differential operator will also be conformally invariant. Thus the intertwining operators of the fundamental solution (as a convolution operator) are the inverses of the intertwining operators of the differential operators. This idea brings us an infinite class of conformally invariant convolution type operators in higher spin spaces; their inverses, when they exist, are generalized conformally invariant pseudo-differential operators. More details can be found in Section $4.1$. It is worth pointing out that these intertwining operators and convolution type operators are special cases of Knapp-Stein intertwining operators and Knapp-Stein operators in higher spin theory (\cite{CO,KS}).\\
\par
Our study of conformally invariant differential operators in higher spin spaces suggests a distinct Representation-Theoretic approach to Clifford analysis, in contrast to the classical Stokes approach. In the latter approach, the motivation for Dirac-type operators is to obtain operators satisfying a Stokes-type theorem. This does not need irreducible representation theory. In contrast, in the Representation-Theoretic approach, we consider functions taking values in irreducible representations of the Spin group. This forces one to consider irreducible representation theory, as happens elsewhere in the literature where Dirac operators are used \cite{G} and especially in spin geometry \cite{Lawson}. Moreover, irreducible spin representations are natural for studying spin invariance and in particular conformal invariance. That is not to dismiss the Stokes approach\textemdash it is used, for instance, to establish the $L^2$ boundedness of the double layer potential operator on Lipschitz graphs \cite{Mitrea}, and other applications are found in such works as \cite{Belgians}. Though the present work aims to demonstrate the value of the Representation-Theoretic approach, in future work the two distinct approaches may complement each other.\\
\par
The paper is organized as follows. We briefly introduce Clifford algebras, Clifford analysis, and representation theory of the Spin group in Section 2. We recall the Stein-Weiss construction of the Euclidean Dirac operator and Rarita-Schwinger operator from \cite{Ding0,SR} in Section 3. This motivates the extensive use of representation theory in our recent work on conformally invariant differential operators in higher spin theory. Further, this construction also reveals that Stein-Weiss gradient operators and representation theory of the Spin group provide the most natural approach to the study of Rarita-Schwinger operators.\\
\par
In Section 4, we provide an approach different from \cite{B1,E} to construct these conformally invariant differential operators. This approach relies heavily on the fundamental solutions of these conformally invariant differential operators. We also define a convolution type operator associated to each fundamental solution to show each fundamental solution is actually the inverse of the corresponding differential operator. An explicit proof for the intertwining operators of these convolution type operators is provided there. This implies conformal invariance of these convolution type operators and conformal invariance of the corresponding differential operators is shown immediately. We point out that this idea also gives an infinite class of conformally invariant convolution type operators; their inverses, when they exist, are generalized conformally invariant pseudo-differential operators. We also show that the higher spin Laplace operators \cite{B1} can also be derived from this approach. Then we introduce bosonic operators $\Deven$ as the generalization of $D_x^{2j}$ when acting on $C^{\infty}(\Rm, \Hk)$ and fermionic operators $\Dodd$ as the generalization of $D_x^{2j-1}$ when acting on $C^{\infty}(\Rm, \Mk)$, where $D_x$ is the Euclidean Dirac operator with respect to the variable $x$. The connections between these and lower order conformally invariant operators are also revealed in the construction. Moreover, since the construction is explicitly based on the uniqueness of the operators and their fundamental solutions with the appropriate intertwining operators for a conformal transformation, the conformal invariance and fundamental solutions of the bosonic and fermionic operators arise naturally in our formalism. 
\par
We cover technical details and proofs for the fermionic case in Section 5.

\section*{Acknowledgement}
The authors are grateful to Bent \O rsted for communications pointing out that the intertwining operators for our fermionic (bosonic) operators and the convolution type operators defined with the fundamental solutions of fermionic (bosonic) operators are special cases of Knapp-Stein intertwining operators and Knapp-Stein operators in higher spin theory (\cite{CO,KS}). The authors are also grateful to the referee for helpful comments.
\section{Preliminaries}
\subsection{Clifford algebra}\hspace*{\fill}
A real Clifford algebra, $\mathcal{C}l_m,$ can be generated from the $m$-dimensional real Euclidean space $\mathbb{R}^m$ by considering the
relationship $$\underline{x}^{2}=-\|\underline{x}\|^{2}$$ for each
$\underline{x}\in \mathbb{R}^m$.  We have $\mathbb{R}^m\subseteq \mathcal{C}l_m$. If $\{e_1,\ldots, e_m\}$ is an orthonormal basis for $\mathbb{R}^m$, then $\underline{x}^{2}=-\|\underline{x}\|^{2}$ tells us that $$e_i e_j + e_j e_i= -2\delta_{ij},$$ where $\delta_{ij}$ is the Kronecker delta function. An arbitrary element of the basis of the Clifford algebra can be written as $e_A=e_{j_1}\cdots e_{j_r},$ where $A=\{j_1, \cdots, j_r\}\subset \{1, 2, \cdots, m\}$ and $1\leq j_1< j_2 < \cdots < j_r \leq m.$
Hence for any element $a\in \mathcal{C}l_m$, we have $a=\sum_Aa_Ae_A,$ where $a_A\in \mathbb{R}$. Similarly, the complex Clifford algebra $\Clm (\C)$ is defined as the complexification of the real Clifford algebra
$$\Clm (\C)=\Clm\otimes\C.$$
We consider the real Clifford algebra $\Clm$ throughout this subsection, but in the rest of the paper we consider the complex Clifford algebra $\Clm (\mathbb{C})$ unless otherwise specified. The complex Clifford algebra may be viewed as a vector space over the field of scalars $\mathbb{C}$; correspondingly, in this work we may refer to complex-valued functions as scalar-valued functions. Alternatively, there is an isomorphic copy of $\mathbb{C}$ embedded in $\Clm (\mathbb{C})$ that may be considered as a scalar subspace. 

For $a=\sum_Aa_Ae_A\in\Clm$, we define the reversion of $a$ as
\begin{eqnarray*}
\tilde{a}=\sum_{A}(-1)^{|A|(|A|-1)/2}a_Ae_A,
\end{eqnarray*}
where $|A|$ is the cardinality of $A$. In particular, $\widetilde{e_{j_1}\cdots e_{j_r}}=e_{j_r}\cdots e_{j_1}$. Also $\tilde{ab}=\tilde{b}\tilde{a}$ for $a, b\in\Clm.$
\\
\par
The Pin and Spin groups play an important role in Clifford analysis. The Pin group can be defined as $$Pin(m)=\{a\in \mathcal{C}l_m: a=y_1y_2\dots y_{p},\ y_1,\dots,y_{p}\in\mathbb{S}^{m-1},p\in\mathbb{N}\},$$ 
where $\mathbb{S} ^{m-1}$ is the unit sphere in $\Rm$. $Pin(m)$ is clearly a multiplicative group in $\mathcal{C}l_m$, see \cite{Belgians} for more details. \\
\par
Now suppose $a\in \mathbb{S}^{m-1}\subseteq \mathbb{R}^m$. If we consider $axa$, we may decompose
$$x=x_{a\parallel}+x_{a\perp},$$
where $x_{a\parallel}$ is the projection of $x$ onto $a$ and $x_{a\perp}$ is the remainder part of $x$ perpendicular to $a$. Hence $x_{a\parallel}$ is a scalar multiple of $a$ and we have
$$axa=ax_{a\parallel}a+ax_{a\perp}a=-x_{a\parallel}+x_{a\perp}.$$
So the action $axa$ describes a reflection of $x$ in the direction of $a$. By the Cartan-Dieudonn$\acute{e}$ Theorem each $O\in O(m)$ is the composition of a finite number of reflections. If $a=y_1\cdots y_p\in Pin(m),$ we define $\tilde{a}:=y_p\cdots y_1$ and observe $ax\tilde{a}=O_a(x)$ for some $O_a\in O(m)$. Choosing $y_1,\ \dots,\ y_p$ arbitrarily in $\mathbb{S}^{m-1}$, we have the group homomorphism
\begin{eqnarray*}
\theta:\ Pin(m)\longrightarrow O(m)\ :\ a\mapsto O_a,
\end{eqnarray*}
with $a=y_1\cdots y_p$ and $O_ax=ax\tilde{a}$ is surjective. Further $-ax(-\tilde{a})=ax\tilde{a}$, so $1,\ -1\in Ker(\theta)$. In fact $Ker(\theta)=\{1,\ -1\}$. See \cite{P1}. The Spin group is defined as
$$Spin(m)=\{a\in \mathcal{C}l_m: a=y_1y_2\dots y_{2p},\ y_1,\dots,y_{2p}\in\mathbb{S}^{m-1},p\in\mathbb{N}\}$$
 and it is a subgroup of $Pin(m)$. There is a group homomorphism
\begin{eqnarray*}
\theta:\ Spin(m)\longrightarrow SO(m)
\end{eqnarray*}
that is surjective with kernel $\{1,\ -1\}$ and defined by the above group homomorphism for $Pin(m)$. Thus $Spin(m)$ is the double cover of $SO(m)$. See \cite{P1} for more details.\\
\par
For a domain $U$ in $\Rm$, a diffeomorphism $\phi: U\longrightarrow \mathbb{R}^m$ is said to be conformal if, for each $x\in U$ and each $\mathbf{u,v}\in TU_x$, the angle between $\mathbf{u}$ and $\mathbf{v}$ is preserved under the corresponding differential at $x$, $d\phi_x$.
For $m\geq 3$, a theorem of Liouville tells us the only conformal transformations are M\"obius transformations. Ahlfors and Vahlen show any M\"{o}bius transformation on $\mathbb{R}^m \cup \{\infty\}$ can be expressed as $y=(ax+b)(cx+d)^{-1}$ with $a,\ b,\ c,\ d\in \mathcal{C}l_m$ satisfying the following conditions \cite{Ah}:
\begin{eqnarray*}
&&1.\ a,\ b,\ c,\ d\ are\ all\ products\ of\ vectors\ in\ \mathbb{R}^m.\\
&&2.\ a\tilde{b},\ c\tilde{d},\ \tilde{b}c,\ \tilde{d}a\in\mathbb{R}^m.\\
&&3.\ a\tilde{d}-b\tilde{c}=\pm 1.
\end{eqnarray*}
 Since $y=(ax+b)(cx+d)^{-1}=ac^{-1}+(b-ac^{-1}d)(cx+d)^{-1}$, a conformal transformation can be decomposed as compositions of translation, dilation, reflection and inversion. This gives an \emph{Iwasawa decomposition} for M\"obius transformations. See \cite{Li} for more details.
\par
The Dirac operator in $\mathbb{R}^m$ is defined to be $$D_x:=\sum_{i=1}^{m}e_i\partial_{x_i}.$$  Note $D_x^2=-\Delta_x$, where $\Delta_x$ is the Laplacian in $\mathbb{R}^m$.  A $\Clm$-valued function $f(x)$ defined on a domain $U$ in $\Rm$ is left monogenic if $D_xf(x)=0.$ Since Clifford multiplication is not commutative in general, there is a similar definition for right monogenic functions. Sometimes, we will consider the Dirac operator $D_u$ in a vector $u$ rather than $x$.\\
\par
In classical Clifford analysis, the $k$th order conformally invariant differential operator is $D_x^k$ and a large number of results have been found, for instance, \cite{Fe,P,R,R1}. In particular, the fundamental solutions and the intertwining operators for $D_x^k$ are as follows.
\begin{proposition}\cite{P,R}\textbf{(Fundamental solutions for $D_x^k$)}\\
Let $x\in\Rm$, the fundamental solutions $G_{k}(x)$ for $D_x^k$ are as follows. When $m$ is odd,
\begin{equation*}
G_k(x):=
\begin{cases}
c_{2n}||x||^{2n-m},  &\text{if $k=2n$, $n=1,2,\cdots$,}\\
c_{2n-1}\displaystyle\frac{x}{||x||^{m-2n+2}},   &\text{if $k=2n-1$, $n=1,2,\cdots.$}
\end{cases}
\end{equation*}
When $m$ is even,
\begin{equation*}
G_k(x):=
\begin{cases}
\displaystyle\frac{1}{||x||^{m-2n}},  &\text{if $k=2n$, $n=1,2,\cdots,\frac{m}{2}-1$,}\\
\displaystyle\frac{x}{||x||^{m-2n+2}},   &\text{if $k=2n-1$, $n=1,2,\cdots,\frac{m}{2}-1$.}
\end{cases}
\end{equation*}
\end{proposition}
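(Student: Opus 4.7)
My plan is to prove this by induction on $k$, using the recursion $D_x G_k = G_{k-1}$ (up to normalization constants) to reduce everything to the classical base case $k=1$.

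First I would establish the base case. The Cauchy kernel $G_1(x) = c_1\, x/\|x\|^m$, with $c_1$ the reciprocal of the surface area of $\mathbb{S}^{m-1}$, is the classical fundamental solution of $D_x$; this is verified by testing against smooth compactly supported functions on $\Rm \setminus B_\epsilon(0)$, applying the Clifford Stokes theorem, and taking $\epsilon \to 0^+$. For $k=2$ the identity $D_x^2 = -\Delta_x$ reduces the claim to the standard Newtonian potential $\|x\|^{2-m}$, confirming the stated $G_2$.

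For the inductive step I would rely on two elementary differentiation identities, valid pointwise for $x \neq 0$:
\[
D_x \|x\|^a = a\, x\, \|x\|^{a-2}, \qquad D_x\bigl(x\, \|x\|^a\bigr) = -(m+a)\, \|x\|^a.
\]
Applying these to the proposed $G_k$ gives $D_x G_k = \lambda_k G_{k-1}$ for an explicit constant $\lambda_k$: specifically $\lambda_{2n-1} = -(2n-2)$ on the odd-to-even step and $\lambda_{2n} = 2n - m$ on the even-to-odd step. Absorbing these factors into the normalizations $c_k$ yields $D_x G_k = G_{k-1}$ for all $k \geq 2$, and iterating produces $D_x^k G_k = D_x G_1 = \delta$ in the distributional sense.

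The step I expect to be the main obstacle is upgrading the pointwise identities above to distributional ones at the origin: one must verify that no additional delta contribution is generated at $x=0$ when applying $D_x$ to $G_k$ for $k \geq 2$. Since $G_{k-1}$ is locally integrable and $G_k$ has an even milder singularity for $k \geq 2$, a standard cutoff-and-Stokes argument shows the recursion holds in $\mathcal{D}'(\Rm)$, with the entire singular support of $\delta$ carried by the base case. This analysis also makes transparent the dimensional restriction in the even-$m$ case: the recursion fails precisely when $\lambda_{2n} = 2n - m = 0$, i.e.\ when $2n = m$, reflecting the familiar fact that iterated powers of $\Delta$ beyond the critical order produce logarithmic, not pure power-type, fundamental solutions. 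For odd $m$ no exponent ever hits zero and the recursion continues indefinitely, so the only remaining task is clean bookkeeping of the constants $c_k$ through the iteration.
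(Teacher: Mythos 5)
The paper does not prove this proposition; it is cited directly from Peetre--Qian and Ryan, so there is no in-text argument to compare against. Your self-contained inductive proof is correct and complete in outline. The two pointwise identities $D_x\|x\|^a = a\,x\,\|x\|^{a-2}$ and $D_x\bigl(x\|x\|^a\bigr) = -(m+a)\|x\|^a$ are both right (the second follows from $D_x\bigl(\|x\|^a x\bigr) = (D_x\|x\|^a)x + \|x\|^a D_x x = a x^2 \|x\|^{a-2} - m\|x\|^a$ and $x^2=-\|x\|^2$), and they give exactly the ladder $D_x G_{2n} = (2n-m)\,G_{2n-1}$ and $D_x G_{2n-1} = -(2n-2)\,G_{2n-2}$ modulo normalization, which terminates at the Cauchy kernel. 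Your distributional upgrade is the right point to flag: the cutoff argument produces a boundary term of size $O(\epsilon^{k-1})$ on $\partial B_\epsilon$, which vanishes for all $k\geq 2$ and contributes the delta only at $k=1$, exactly as you say. Your diagnosis of the even-dimensional restriction is also the correct mechanism -- the constant $2n-m$ kills the even-to-odd step precisely at $2n=m$, beyond which one needs logarithmic fundamental solutions for $\Delta^{m/2}$. One very minor caveat worth recording: the restriction $n\leq m/2-1$ in the odd case $k=2n-1$ is slightly conservative as stated (your chain from $G_{m-1}$ downward never encounters $\lambda_{2l}=0$, since the largest even index it touches is $m-2$), so your argument actually establishes the odd-$k$ formula for $n\leq m/2$; the statement in the paper is simply following the cited sources. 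In short, your proof is correct and supplies a direct verification the paper merely cites.
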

\begin{proposition}\cite{P,R} \textbf{(Intertwining operators for $D_x^k$)}\label{IODk}\\
Let $y=\varphi(x)=(ax+b)(cx+d)^{-1}$ be a M\"{o}bius transformation. Then we have
\begin{eqnarray*}
J_{-k}(\varphi,x)D^k_yf(y)=D^k_xJ_k(\varphi,x)f((ax+b)(cx+d)^{-1}),
\end{eqnarray*}
where 
\begin{eqnarray*}
&&J_k(\varphi,x)=\frac{\widetilde{cx+d}}{||cx+d||^{m-2j+2}},\ if\ k=2j-1,\\
&& J_k(\varphi,x)=||cy+d||^{2j-m},\ if\ k=2j;\\
&&J_{-k}(\varphi,x)=\frac{cx+d}{||cx+d||^{m+2j}},\ if\ k=2j-1,\\
&& J_{-k}(\varphi,x)=||cx+d||^{-m-2j},\ if\ k=2j,
\end{eqnarray*}
and $j$ is a positive integer. $J_{-k},\ J_{k}$ are called the intertwining operators  and $J_k$ is called the conformal weight for $D_x^k$.
\end{proposition}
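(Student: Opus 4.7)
The plan is to verify the intertwining identity on each class of generators in the Iwasawa decomposition already recalled in the preceding text — translations, dilations, orthogonal transformations, and inversion — and then to assemble the general case from a cocycle identity for the weights $J_{\pm k}$.

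For the first three classes the verification is largely routine. If $\varphi(x)=x+b$, then $c=0$, $d=1$, so both $J_k$ and $J_{-k}$ collapse to $1$, and the claim reduces to the trivial chain-rule statement $D_y^k f(y)=D_x^k f(x+b)$. If $\varphi(x)=sx\tilde{s}$ with $s\in Pin(m)$ is an orthogonal transformation, then $cx+d$ is a constant Clifford element of unit norm, so $J_{\pm k}$ reduces (up to sign) to multiplication by $s$ or $\tilde{s}$, and the identity follows from the $Spin(m)$-equivariance of the Dirac operator. For a dilation $y=\lambda x$ one has $D_y=\lambda^{-1}D_x$ and hence $D_y^k=\lambda^{-k}D_x^k$; a direct computation then shows that the compensating powers of $\lambda$ produced by $J_k$ and $J_{-k}$ are exactly correct.

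The main obstacle is the inversion $y=x^{-1}$, for which $c=1$, $d=0$ and both $cx+d=x$ and $cy+d=y$ enter the weights nontrivially. The base case $k=1$ is the classical Kelvin-type identity for the Dirac operator,
\begin{equation*}
\frac{x}{\|x\|^{m+2}}\bigl(D_y f\bigr)(x^{-1})\;=\;D_x\!\left(\frac{x}{\|x\|^{m}}\,f(x^{-1})\right),
\end{equation*}
which I would establish by a direct chain-rule computation, using that the Cauchy kernel $x/\|x\|^{m}$ is annihilated by $D_x$ off the origin and that the differential of inversion is $\|x\|^{-2}$ times an orthogonal reflection through $x$. The analogous base case $k=2$ is the standard Kelvin identity for the Laplacian, since $D_x^2=-\Delta_x$. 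The general case then proceeds by induction on $k$: factor $D_x^k=D_x\circ D_x^{k-1}$ (or $D_x^k=-\Delta_x\circ D_x^{k-2}$ in the even case), apply the already-established low-order identity and the inductive hypothesis, and keep careful track of the Clifford factor $x$ and powers of $\|x\|$ that migrate between the two sides. The point is that the explicit weights in the statement are engineered precisely so that these factors telescope under this iterative factoring; in particular, the placement of the reversion $\widetilde{cx+d}$ in the odd-order weight $J_{2j-1}$ is what makes the noncommutative Clifford multiplication come out right.

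Finally, to pass from the generators to an arbitrary M\"obius transformation $\varphi=\varphi_N\circ\cdots\circ\varphi_1$, I would verify a cocycle identity of the form
\begin{equation*}
J_k(\varphi,x)\;=\;J_k(\varphi_1,x)\,J_k\bigl(\varphi_N\circ\cdots\circ\varphi_2,\varphi_1(x)\bigr),
\end{equation*}
with the analogous identity for $J_{-k}$, and then iterate the generator-level identity. The delicate bookkeeping is again concentrated in the odd-order case, where the order of Clifford multiplication matters; once this is correctly set up, the composition yields the full intertwining relation for every M\"obius transformation, proving the proposition.
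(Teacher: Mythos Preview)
The paper does not prove this proposition; it is quoted from the references \cite{P,R} (Peetre--Qian and Ryan) and stated without proof as background material. So there is no ``paper's own proof'' to compare against.

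That said, your outline is the standard route taken in those references: reduce to generators via the Iwasawa decomposition, handle inversion by induction on $k$ starting from the Kelvin-type identity for $D_x$ (and $-\Delta_x$), and glue using a cocycle relation for $J_{\pm k}$. The strategy is sound. What you have written is a proposal rather than a proof, and the two places where real work remains are exactly the ones you flag: the induction step under inversion (where one must check that $D_x$ acting on $\dfrac{x}{\|x\|^{m-2j+2}}g(x^{-1})$ produces $\dfrac{x}{\|x\|^{m-2j+4}}(D_y g)(x^{-1})$ with the right Clifford ordering), and the cocycle identity $J_k(\psi\circ\varphi,x)=J_k(\varphi,x)\,J_k(\psi,\varphi(x))$, which in the odd case requires the Ahlfors--Vahlen relation $(c'\varphi(x)+d')(cx+d)=c''x+d''$ together with reversion. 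Neither is deep, but both must actually be written out for the argument to be complete.
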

In this paper, we will generalize $k$th ($k>2$) order conformally invariant differential operators from classical Clifford analysis to higher spin theory as well as their fundamental solutions and intertwining operators. We start with introducing two well known polynomial spaces and the first and second order conformally invariant differential operators in higher spin theory as follows.\\
\par
Let $\mathcal{M}_k$ denote the space of $\mathcal{C}l_m$-valued monogenic polynomials homogeneous of degree $k$. Note that if $h_k\in\Hk$, the space of $\mathcal{C}l_m$-valued harmonic polynomials homogeneous of degree $k$, then $D_uh_k\in\mathcal{M}_{k-1}$, but $D_uup_{k-1}(u)=(-m-2k+2)p_{k-1}(u),$ so
$$\mathcal{H}_k=\mathcal{M}_k\oplus u\mathcal{M}_{k-1},\ h_j=p_k+up_{k-1}.$$
This is an \emph{Almansi-Fischer decomposition} of $\Hk$ \cite{D}. In this Almansi-Fischer decomposition, we define $P_k$ as the projection map 
\begin{eqnarray*}
P_k: \mathcal{H}_k\longrightarrow \mathcal{M}_k.
\end{eqnarray*}
Suppose $U$ is a domain in $\mathbb{R}^m$. Consider a differentiable function $f: U\times \mathbb{R}^m\longrightarrow \mathcal{C}l_m$
such that, for each $x\in U$, $f(x,u)$ is a left monogenic polynomial homogeneous of degree $k$ in $u$. Then the first order conformally invariant differential operator in higher spin theory, named as Rarita-Schwinger operator \cite{B,D}, is defined by 
\begin{eqnarray}\label{RSoperator}
R_kf(x,u):=P_kD_xf(x,u)=(\frac{uD_u}{m+2k-2}+1)D_xf(x,u).
\end{eqnarray}
Let $Z_k(u,v)$ be the reproducing kernel for $\Mk$, which satisfies
\begin{eqnarray*}
f(v)=\int_{\Sm}\overline{Z_k(u,v)}f(u)dS(u),\ for\ all\ f(v)\in\Mk.
\end{eqnarray*}
Then the fundamental solution for $R_k$ is 
\begin{eqnarray*}
E_{k,1}(x,u,v)=\frac{1}{\omega_{m-1}c_{k,1}}\frac{x}{||x||^m}Z_k(\frac{xux}{||x||^2},v),
\end{eqnarray*}
where constant $c_{k,1}$ is $\displaystyle\frac{m-2}{m+2k-2}$ and $\omega_{m-1}$ is the area of $(m-1)$-dimensional unit sphere.\\
\par
In other words, $R_k$ can be considered as the inverse of $E_{k,1}(x,u,v)$ in the following sense.
\begin{proposition}
For any $\phi(y,v)\in C^{\infty}(\Rm,\Mk)$ with compact support with respect to variable $x$, we have
\begin{eqnarray*}
\iint_{\Rm}(R_kE_{k,1}(x-y,u,v),\phi(x,v))_vdx^m=\phi (y,u).
\end{eqnarray*}
where
\begin{eqnarray*}
(f(v),g(v))_v=\int_{\Sm}f(v)g(v)dS(v)
\end{eqnarray*}
is the Fischer-inner product for two Clifford valued polynomials.
\end{proposition}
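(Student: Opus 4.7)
The plan is to show that, as a distribution in $x$,
$R_k E_{k,1}(x-y, u, v) = \delta(x-y)\, Z_k(u, v)$
(with normalization forced by $c_{k,1} = (m-2)/(m+2k-2)$), and then to invoke the reproducing property of $Z_k$ on $\Mk$ to collapse the sphere integral to $\phi(y, u)$. The proof splits naturally into a pointwise identity away from the singularity, a Stokes-type reduction to a boundary integral around the singularity, and an explicit evaluation of that boundary integral.

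First, I would verify $R_k E_{k,1}(x, u, v) = P_k D_x E_{k,1}(x, u, v) = 0$ for $x \neq 0$, so that $E_{k,1}$ has singular support only at the origin. Applying the Leibniz rule to $D_x\bigl[\frac{x}{\|x\|^m}\,Z_k\bigl(\frac{xux}{\|x\|^2}, v\bigr)\bigr]$, the term in which $D_x$ hits $x/\|x\|^m$ vanishes pointwise, since $x/\|x\|^m$ is left-monogenic away from $0$. The remaining term, in which $D_x$ differentiates $Z_k$ through its Kelvin-inverted first argument, is annihilated once $P_k = 1 + uD_u/(m+2k-2)$ is applied, using monogenicity of $Z_k(\cdot, v)$ together with the transformation rule of $D_x$ under the Kelvin inversion $u \mapsto xux/\|x\|^2$. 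This is essentially the verification that $E_{k,1}$ is the Rarita-Schwinger fundamental solution in \cite{B, D}, which I would cite rather than reproduce.

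Given Step 1, the integrand $(R_k E_{k,1}(x-y, u, v), \phi(x, v))_v$ vanishes as an ordinary function on $\Rm \setminus B_\varepsilon(y)$, so the only contribution to the double integral comes from the singularity at $x = y$. I would then invoke a Stokes-type (Borel-Pompeiu) identity for $R_k$ established in \cite{B, D}, schematically
$\iint_\Omega \bigl[(R_k f, g)_v - (f, R_k g)_v\bigr]\,dx = \int_{\partial \Omega}\bigl(f, d\sigma_x\, g\bigr)_v$,
applied to $\Omega = \Rm \setminus B_\varepsilon(y)$ with $f = E_{k,1}(\cdot - y, u, \cdot)$ and $g = \phi$. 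Compact support of $\phi$ kills any boundary at infinity, and the pairing $(f, R_k g)_v$ is integrable in $x$, so letting $\varepsilon \to 0$ identifies $\iint (R_k E_{k,1}, \phi)_v\,dx$ with the limit of a boundary integral over $\partial B_\varepsilon(y)$.

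Finally I would compute this boundary limit. Parametrizing $x = y + \varepsilon\omega$ with $\omega \in \Sm$, the Cauchy-type kernel $\frac{x-y}{\|x-y\|^m}$ contributes $\varepsilon^{-(m-1)}$, which exactly cancels the surface measure $\varepsilon^{m-1}\,dS(\omega)$; the outward normal $\omega$ combines with the vector factor in the kernel to produce a scalar. Replacing $\phi(y+\varepsilon\omega, v)$ by $\phi(y, v)$ by continuity, and using $Spin(m)$-equivariance of $Z_k$ under the reflection $v \mapsto \omega v \omega$, the limit reduces to a sphere integral of $Z_k(u, v)$ against $\phi(y, v)$, to which the reproducing property $\phi(y, u) = \int_{\Sm} \overline{Z_k(v, u)}\,\phi(y, v)\,dS(v)$ applies, yielding $\phi(y, u)$; the constant $c_{k,1} = (m-2)/(m+2k-2)$ is precisely what makes the overall prefactor equal to one. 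The principal obstacle is this last step: tracking how the Kelvin-twisted argument $\omega u \omega$ of $Z_k$ interacts with the vector Cauchy kernel through the projector $P_k$, and identifying the resulting sphere average with the reproducing formula on $\Mk$, requires careful use of the Spin-covariance of $Z_k$ and of the combinatorial identity that pins down $c_{k,1}$.
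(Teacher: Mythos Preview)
The paper does not supply a proof of this proposition: it appears in the preliminaries as a known fact, with the fundamental solution of $R_k$ attributed to \cite{B,D}. Your outline is precisely the classical analytic route taken in those references --- verify $R_kE_{k,1}=0$ off the diagonal, apply the Rarita--Schwinger Stokes/Borel--Pompeiu formula on $\Rm\setminus B_\varepsilon(y)$, and evaluate the surviving sphere integral --- so it is both correct and faithful to the cited sources.

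It is worth contrasting this with how the paper handles the analogous higher-order statements. In Proposition~\ref{fundamentalsolutions} the authors do \emph{not} compute a boundary integral; instead they observe that $\mathcal{D}_tE_{k,t}$ is a distribution homogeneous of degree $-m$ with support at the origin, hence of the form $\delta(x)P_k(u,v)$ for some $P_k\in\Hk\otimes\Hk^*$, and then invoke $Spin(m)$-equivariance together with Schur's Lemma to force $P_k=Z_k$. That argument is soft and scales painlessly to arbitrary order, but it only determines the normalizing constant implicitly (``if we choose $c_{2j}$ properly''). Your boundary-integral computation is harder to push to higher order but has the virtue of making the constant $c_{k,1}=(m-2)/(m+2k-2)$ explicit, which is exactly what the Schur-lemma argument leaves opaque.
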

\par
The second order conformally invariant differential operator in higher spin theory, named the higher spin Laplace operator \cite{B1}, is defined by
 $$\Dtwo=\Delta_x-\displaystyle\frac{4\udx\dudx}{m+2k-2}+\displaystyle\frac{||u||^2\dudx^2}{(m+2k-2)(m+2k-4)}.$$ 
 Its fundamental solution is given by
 $$E_{k,2}(x,u,v)=\displaystyle\frac{(m+2k-4)\Gamma(\displaystyle\frac{m}{2}-1)}{4(4-m)\pi^{\frac{m}{2}}}||x||^{2-m}Z_k(\displaystyle\frac{xux}{||x||^2},v),$$ 
where $Z_k(u,v)$ is the reproducing kernel for $\Hk$ and satisfies
\begin{eqnarray*}
f(v)=\int_{\Sm}\overline{Z_k(u,v)}f(u)dS(u),\ for\ all\ f(v)\in\Hk.
\end{eqnarray*} 
Also $\Dtwo$ can also considered as the inverse of $E_{k,2}(x,u,v)$ in a similar sense as for $R_k$. This will be studied in a more general setting in Section $4.1$.\\
\par
Though we have presented the Almansi-Fischer decomposition, the Dirac operator, and the Rarita-Schwinger operator here in terms of functions taking values in the real Clifford algebra $\mathcal{C}l_m$, they can all be realized in the same way for spinor-valued functions in the complex Clifford algebra $\Clm (\C)$, see \cite{De}; we discuss spinors in the next section.

\subsection{Irreducible representations of the Spin group}
We now introduce three representations of $Spin(m)$. The first representation of the Spin group is used as the target space in spinor-valued theory and the other two representations of the Spin group are frequently used as target spaces in higher spin theory.
\subsubsection{Spinor representation space $\mathcal{S}$}
The most commonly used representation of the Spin group in $\mathcal{C}l_m(\C)$-valued function theory is the spinor space. To this end, consider the complex Clifford algebra $\mathcal{C}l_m(\mathbb{C})$ with even dimension $m=2n$. The space of vectors $\C^m$ is embedded in $\Clm(\C)$ as
\begin{eqnarray*}
(x_1,x_2,\cdots,x_m)\mapsto \sum^{m}_{j=1}x_je_j:\ \C ^m\hookrightarrow \mathcal{C}l_m(\C).
\end{eqnarray*}
We denote $x$ for a \emph{vector} in both interpretations.
The \emph{Witt basis} elements of $\C^{m}$ are defined by 
$$f_j:=\displaystyle\frac{e_{2j-1}-ie_{2j}}{2},\ \ f_j^{\dagger}:=-\displaystyle\frac{e_{2j-1}+ie_{2j}}{2},\ 1\leq j\leq n.$$
Let $I:=f_1f_1^{\dagger}\dots f_nf_n^{\dagger}$. The space of \emph{Dirac spinors} is defined as
$$\mathcal{S}:=\Clm(\C)I.$$ This is a representation of $Spin(m)$ under the following action
$$\rho(s)\scs:=s\scs,\ for\ s\in Spin(m).$$
Note $\scs$ is a left ideal of $\Clm (\C)$. For more details, see \cite{De}. An alternative construction of spinor spaces is given in the classic paper of Atiyah, Bott and Shapiro \cite{At}.

\subsubsection{Homogeneous harmonic polynomials on $\mathcal{H}_k(\Rm,\mathbb{C})$}
It is well known the space of harmonic polynomials is invariant under action of $Spin(m)$, since the Laplacian $\Delta_m$ is an $SO(m)$ invariant operator. It is not irreducible for $Spin(m)$, however, and can be decomposed into the infinite sum of $k$-homogeneous harmonic polynomials, $0\leq k<\infty$. Each of these spaces is irreducible for $Spin(m)$. This brings the most familiar representations of $Spin(m)$: spaces of complex-valued $k$-homogeneous harmonic polynomials on $\mathbb{R}^m$, denoted by $\Hk:=\mathcal{H}_k(\Rm,\mathbb{C})$. Since $\mathbb{C}$ is considered as a scalar subspace of $\mathcal{C}l_m(\mathbb{C})$, $\Hk$ is also called a scalar-valued $k$-homogeneous harmonic polynomial spaces. The following action has been shown to be an irreducible representation of $Spin(m)$ \cite{G,L}: 

\begin{eqnarray*}
\rho\ :\ Spin(m)\longrightarrow Aut(\Hk),\ s\longmapsto (f(x)\mapsto \tilde{s}f(sx\tilde{s})s).
\end{eqnarray*}
This can also be realized as follows
\begin{eqnarray*}
Spin(m)\xlongrightarrow{\theta}SO(m)\xlongrightarrow{\rho} Aut(\Hk);\\
a\longmapsto O_a\longmapsto \big(f(x)\mapsto f(O_ax)\big),
\end{eqnarray*}
where $\theta$ is the double covering map and $\rho$ is the standard action of $SO(m)$ on a function $f(x)\in\Hk$ with $x\in\mathbb{R}^m$. 

\subsubsection{Homogeneous monogenic polynomials on $\mathcal{C}l_m$}
In $\mathcal{C}l_m$-valued function theory, the previously mentioned Almansi-Fischer decomposition shows we can also decompose the space of $k$-homogeneous harmonic polynomials:
$$\Hk=\Mk\oplus u\Mkk.$$
If we restrict $\Mk$ to the spinor valued subspace, we have another important representation of $Spin(m)$: the space of $k$-homogeneous spinor-valued monogenic polynomials on $\Rm$, henceforth denoted by $\Mk:=\Mk (\mathbb{R},\mathcal{S})$. Specifically, the following action has been shown to be an irreducible representation of $Spin(m)$ \cite{G,L}:
\begin{eqnarray*}
\pi\ :\ Spin(m)\longrightarrow Aut(\Mk),\ s\longmapsto f(x)\mapsto \tilde{s}f(sx\tilde{s}).
\end{eqnarray*}

\section{Stein-Weiss type operators}\hspace*{\fill} \\
In classical Clifford analysis, the Euclidean Dirac operator was initially motivated from Stokes' Theorem \cite{Johnnote} and Clifford algebras were used to study it. When we consider function theory in higher spin spaces, since these functions take values in irreducible representations of the Spin group, it turns out representation theory provides a quite different approach for operator theory in higher spin spaces. Abundant results have been found with this approach: for instance, \cite{B1,Br1,DavidE,E}. In 1968, Stein and Weiss \cite{ES} pointed out that many first-order differential operators can be constructed as projections of generalized gradients with the help of representation theory. Fegan \cite{Fe} showed that such operators are conformally invariant with certain conditions. In \cite{Ding0,SR}, the Euclidean Dirac and Rarita-Schwinger operators were constructed as Stein-Weiss type operators. Since this construction generalizes further to representations of principal bundles over oriented Riemannian spin manifolds, by which one constructs the Atiyah-Singer Dirac operator, we argue the Stein and Weiss construction is the natural way to construct other Dirac type operators as in \cite{G,SR}. In this section, we recall the constructions of the Euclidean Dirac and Rarita-Schwinger operators as Stein-Weiss type operators from \cite{Ding0}. Motivated by this representation theoretic approach, we will construct other higher order conformally invariant differential operators in higher spin spaces in the next section.
\\
\par
Assume $U$ is a finite dimensional inner product complex vector space, $V$ is a $m$-dimensional inner product complex vector space. Denote the groups of all automorphisms of $U$ and $V$ by $GL(U)$ and $GL(V)$, respectively. Suppose $\rho_1:\ G\longrightarrow GL(U)$ and $\rho_2:\ G\longrightarrow GL(V)$ are irreducible representations of a compact Lie group $G$. Let $f(x)$ be a differentiable function defined on a domain $\Omega\subset\Rm$ with values in $U$. We wish to define the gradient $\nabla f(x)$ as a function from the same domain $\Omega$ but with values in $U\otimes V$. Suppose that $\{\zeta_{\alpha}\}$ is an orthonormal basis in $U$ and $f(x)=\sum_\alpha f_{\alpha}(x)\zeta_{\alpha}$. Let $\{e_1,\cdots,e_m\}$ be the standard basis of $V$ arising from the identification of $V$ with $\mathbb{C}^m$. Then a basis (over $\mathbb{C}$) of $U\otimes V$ is $\{\zeta_{\alpha}\otimes e_i\}_{\alpha,i}$ and 
\begin{eqnarray*}
\nabla f(x)=\sum_{\alpha,i}\frac{\partial f_{\alpha}(x)}{\partial_{x_i}}\zeta_{\alpha}\otimes e_i.
\end{eqnarray*}
In this paper, we rewrite $\nabla f(x)$ as follows for convenience,
\begin{eqnarray*}
\nabla f(x)=\sum_{i}\frac{\partial f(x)}{\partial_{x_i}} e_i.
\end{eqnarray*}
Since $U\otimes V$ is not necessarily irreducible as a tensor product representation of $G$, we denote by $U[\times]V$ the irreducible subrepresentation of $U\otimes V$ whose representation space has largest dimension. This is known as the Cartan product of $\rho_1$ and $\rho_2$. For more details, see \cite{Ea,ES}. Using the inner products on $U$ and $V$, we can write
$$U\otimes V=(U[\times]V)\oplus(U[\times]V)^{\perp}.$$
If we denote by $E$ and $E^{\perp}$ the orthogonal projections onto $U[\times]V$ and $(U[\times]V)^{\perp}$, respectively, then we define differential operators $D$ and $D^{\perp}$ associated to $\rho_1$ and $\rho_2$ by
$$D=E\nabla\text{ and}\ D^{\perp}=E^{\perp}\nabla.$$
These are named \emph{Stein-Weiss type operators} after \cite{ES}. The importance of this construction is that one can reconstruct many first-order differential operators with it by choosing proper representation spaces $U$ and $V$ for a Lie group $G$, such as the Euclidean Dirac operators \cite{ES,SR} and Rarita-Schwinger operators \cite{G} that we now proceed to discuss.\\
\par
\emph{1. Dirac operators}
\hspace*{\fill} \\
\par
Here we only show the odd dimension case, but the even dimension case is similar.
\begin{theorem}
Let $\rho_1$ be the representation of the spin group given by the standard representation of $SO(m)$ on $\Rm$
$$\rho_1:\ Spin(m)\longrightarrow SO(m)\longrightarrow GL(\Rm)$$
and let $\rho_2$ be the spin representation on the spinor space $\mathcal{S}$. Then the Euclidean Dirac operator is the differential operator given by projecting the gradient onto $(\Rm[\times]\mathcal{S})^{\perp}$ when $m=2n+1$.
\end{theorem}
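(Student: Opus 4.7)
The plan is to identify, in the tensor product decomposition of $\scs\otimes\Rm$ under $Spin(m)$ with $m=2n+1$, the orthogonal complement of the Cartan product with a copy of $\scs$ realized by Clifford multiplication, and then to read off $E^{\perp}\nabla$ as the Dirac operator.

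First, I would record the relevant branching. For $m=2n+1$ the spinor space $\scs$ is irreducible of highest weight $(\tfrac12,\dots,\tfrac12)$ and the vector representation $\Rm$ has highest weight $(1,0,\dots,0)$. By the standard branching rule (or a direct Klimyk / dimension count), we have
\begin{equation*}
\scs\otimes\Rm\;\cong\;V_{(3/2,1/2,\dots,1/2)}\;\oplus\;V_{(1/2,\dots,1/2)},
\end{equation*}
and a comparison of dimensions ($2^n\cdot 2n$ versus $2^n$) shows the first summand is the one of largest dimension, hence equals the Cartan product $\scs[\times]\Rm$ in the sense of the definition recalled in the text. The second summand, a single copy of $\scs$, is therefore exactly $(\scs[\times]\Rm)^{\perp}$.

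Next I would realize this complementary copy of $\scs$ concretely via Clifford multiplication. Define the map
\begin{equation*}
\mu:\scs\otimes\Rm\longrightarrow\scs,\qquad \mu(\scs\otimes v)=v\cdot\scs.
\end{equation*}
One checks $\mu$ is $Spin(m)$-equivariant, using that for $s\in Spin(m)$ the action on $\Rm$ is $v\mapsto svs^{-1}$ and on $\scs$ is left multiplication, so that $\mu(s\scs\otimes svs^{-1})=svs^{-1}\cdot s\scs = s(v\cdot\scs)=s\mu(\scs\otimes v)$. Because $e_1^2=-1$, left multiplication by any unit vector is a bijection of $\scs$, so $\mu$ is surjective. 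By Schur applied to each irreducible component and the branching above, $\Ker\mu$ must contain $\scs[\times]\Rm$; surjectivity then forces
\begin{equation*}
\Ker\mu=\scs[\times]\Rm,\qquad (\scs[\times]\Rm)^{\perp}\;\xrightarrow{\ \mu\ }\;\scs\ \text{is a $Spin(m)$-isomorphism}.
\end{equation*}
Consequently, up to a nonzero scalar fixed by the chosen inner products, the orthogonal projection $E^{\perp}:\scs\otimes\Rm\to(\scs[\times]\Rm)^{\perp}$ is intertwined with $\mu$, i.e.\ $\mu\circ E^{\perp}=\mu$ on all of $\scs\otimes\Rm$, and we may identify $E^{\perp}$ with the Clifford multiplication map under this isomorphism.

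Finally, applying $E^{\perp}$ to $\nabla f(x)=\sum_i \partial_{x_i}f(x)\otimes e_i$ and identifying the target with $\scs$ via $\mu$, I would obtain
\begin{equation*}
E^{\perp}\nabla f(x)\;=\;\sum_{i=1}^{m} e_i\,\partial_{x_i}f(x)\;=\;D_x f(x),
\end{equation*}
which is the Euclidean Dirac operator. The genuinely substantive step, and the place where the odd-dimensional hypothesis is used, is the second one: establishing that $\Ker\mu$ coincides with the Cartan summand and that no further irreducible pieces appear in $\scs\otimes\Rm$. In even dimensions $m=2n$ the spinor representation is no longer irreducible ($\scs=\scs^+\oplus\scs^-$) and the analogous branching must be handled separately, which is why the theorem is stated in odd dimension and the even case is deferred.
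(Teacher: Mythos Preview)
Your argument is correct and rests on the same core idea as the paper: the complement of the Cartan summand in $\scs\otimes\Rm$ is a single copy of $\scs$ realized via Clifford multiplication. The presentations differ in a dual way. The paper, following Stein--Weiss, uses the \emph{embedding} $\eta:\scs\hookrightarrow\Rm\otimes\scs$, $\omega\mapsto\frac{1}{\sqrt m}(e_1\omega,\dots,e_m\omega)$, identifies $\eta(\scs)$ with $(\Rm[\times]\scs)^{\perp}$, and then shows the \emph{equations} $E^{\perp}\nabla f=0$ and $D_xf=0$ are equivalent by pairing $\nabla f$ against $\eta(\omega)$ and moving $e_i$ across the inner product. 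You instead use the \emph{quotient} $\mu:\scs\otimes\Rm\to\scs$, $\mu(\psi\otimes v)=v\psi$, invoke the explicit branching $\scs\otimes\Rm\cong V_{(3/2,1/2,\dots,1/2)}\oplus\scs$ (which the paper only cites), and read off the \emph{operator} $E^{\perp}\nabla=D_x$ directly after identifying $(\Rm[\times]\scs)^{\perp}\cong\scs$ via $\mu$. Your route is a bit more self-contained (the branching is stated rather than deferred to \cite{ES}) and yields the operator itself rather than only its kernel; the paper's route avoids naming highest weights and stays closer to the classical Stein--Weiss computation. Both are the same argument up to taking adjoints.
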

\emph{Outline proof:} The proof is exactly that appearing in \cite{ES}. Let $\{e_1,\cdots,e_m\}$ be an orthonormal basis of $\Rm$ and $x=(x_1,\cdots,x_m)\in\Rm$. For a function $f(x)$ having values in $\scs$, we must show that the system
$$\sum_{i=1}^{m}e_i\displaystyle\frac{\partial f}{\partial x_i}=0$$
is equivalent to the system
$$D^{\perp}f=E^{\perp}\nabla f=0.$$
We have
$$\Rm\otimes\scs=\Rm[\times]\scs \oplus(\Rm[\times]\scs)^{\perp}$$
and \cite{ES} provides an embedding map
\begin{eqnarray*}
&&\eta: \scs\hookrightarrow \Rm\otimes\scs,\\
&& \omega\mapsto \frac{1}{\sqrt{m}}(e_1\omega,\cdots,e_m\omega).
\end{eqnarray*}
Indeed, this embedding is an isomorphism from $\scs$ into $\Rm\otimes\scs$. For the proof, we refer the reader to \emph{page 175} of \cite{ES}. Thus, we have
$$\Rm\otimes\scs=\Rm[\times]\scs \oplus \eta(\scs).$$
Consider the equation $D^{\perp}f=E^{\perp}\nabla f=0$, where $f$ has values in $\scs$. So $\nabla f$ has values in $\Rm\otimes \scs$, and the condition $D^{\perp}f=0$ is equivalent to $\nabla f$ being orthogonal to $\eta (\scs)$. This is precisely the statement that
$$\sum_{i=1}^{m}(\frac{\partial f}{\partial x_i},e_i\omega)=0,\ \forall\omega\in\scs.$$
Notice, however, that as an endomorphism of $\Rm\otimes\scs$, we have $-e_i$ as the dual of $e_i$. Hence the equation above becomes
$$\sum_{i=1}^m(e_i\frac{\partial f}{\partial x_i},\omega)=0,\ \forall\omega\in\scs,$$
which says precisely that $f$ must be in the kernel of the Euclidean Dirac operator. This completes the proof.\qquad \qquad \qquad \qquad \qquad \qquad \qquad \qquad \qquad \qquad\qquad\qquad\qquad \qquad\quad \qedsymbol\\
\par
\emph{2. Rarita-Schwinger operators}
\hspace*{\fill} \\
\par
\begin{theorem}
Let $\rho_1$ be defined as above and $\rho_2$ is the representation of $Spin(m)$ on $\Mk$. Then as a representation of $Spin(m)$, we have the following decomposition
\begin{eqnarray*}
\Mk\otimes \Rm\cong \Mk[\times]\Rm\oplus\Mk\oplus\Mkk\oplus\mathcal{M}_{k,1},
\end{eqnarray*}
where $\mathcal{M}_{k,1}$ is a simplicial monogenic polynomial space as a $Spin(m)$ representation (see more details in \cite{B1}). The Rarita-Schwinger operator is the differential operator given by projecting the gradient onto the $\Mk$ component.
\end{theorem}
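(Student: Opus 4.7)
The plan is to split the claim into two pieces: first the $Spin(m)$-module decomposition of $\Mk\otimes\Rm$, and second the identification of the Stein-Weiss projection onto the distinguished $\Mk$ summand with the Rarita-Schwinger operator.

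For the decomposition I would argue at the level of highest weights. The module $\Mk$ has highest weight $(k+\tfrac12,\tfrac12,\dots,\tfrac12)$ (obtained by tensoring the spin representation with $k$-homogeneous polynomials), while $\Rm$ carries the vector weight $(1,0,\dots,0)$. A standard tensor-product rule for $\mathfrak{so}(m)$ (Klimyk--Steinberg, or a Pieri-type rule) then yields exactly four admissible dominant weights: $(k+\tfrac32,\tfrac12,\dots)$ corresponding to the Cartan product $\Mk[\times]\Rm$; $(k+\tfrac12,\tfrac32,\tfrac12,\dots)$ corresponding to the simplicial monogenic space $\mathcal{M}_{k,1}$; $(k+\tfrac12,\tfrac12,\dots)$ corresponding to a second copy of $\Mk$; and $(k-\tfrac12,\tfrac12,\dots)$ corresponding to $\Mkk$. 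A dimension check (or direct application of Klimyk's formula) confirms that no further constituents appear.

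To realize the $\Mk$ summand concretely I would use the Almansi-Fischer projection $P_k=1+\frac{uD_u}{m+2k-2}$. For $f\in\Mk$, the polynomial $e_if(u)$ is still harmonic of degree $k$ in $u$ (since $\Delta_u=-D_u^2$ annihilates $f$), so the map
\begin{equation*}
\eta:\Mk\hookrightarrow\Mk\otimes\Rm,\qquad f\longmapsto\sum_{i=1}^m P_k(e_if)\otimes e_i,
\end{equation*}
is well defined and manifestly $Spin(m)$-equivariant (Clifford multiplication and $P_k$ both being equivariant). By Schur's lemma, its image is precisely the distinguished $\Mk$ summand, and the Stein-Weiss operator associated with this summand is $\eta^*\circ\nabla$, where $\eta^*$ denotes the adjoint (equivalently, orthogonal projection onto the image followed by $\eta^{-1}$).

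Applying $\eta^*$ to $\nabla f(x,u)=\sum_i\partial_{x_i}f(x,u)\otimes e_i$ should collapse the tensor via $\sum_i e_i\partial_{x_i}=D_x$, leaving $P_kD_xf(x,u)$, which is exactly $R_kf$ by (\ref{RSoperator}). The main obstacle I anticipate is the bookkeeping in this last step: one must fix compatible Fischer and Euclidean inner products on $\Mk$ and $\Rm$, compute the normalization factor $\|\eta\|^2$ absorbed by $\eta^*$, and use the commutation identity $D_uu+uD_u=-2\mathbb{E}_u-m$ (with $\mathbb{E}_u=\sum_ju_j\partial_{u_j}$) to push $P_k$ past the Clifford multiplications. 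The coefficient $(m+2k-2)^{-1}$ appearing in $P_k$ is precisely what makes this reduction produce the Rarita-Schwinger normalization in (\ref{RSoperator}).
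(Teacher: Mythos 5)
Your proposal is correct but takes a genuinely different route from the paper's, and it is worth recording how the two differ. For the decomposition, the paper does not work out the highest weights: it simply asserts that ``a similar argument as in page 181 of \cite{ES}'' yields the four summands. Your appeal to the Klimyk--Steinberg/Pieri rule for $\mathfrak{so}(m)$ is a legitimate and more self-contained alternative (though one should be slightly careful about the even-dimensional case, where the half-spin weights split; the odd-dimensional statement is the clean one). The sharper divergence is in how the $\Mk$-summand and the resulting operator are identified. The paper embeds $\Mk$ via $\theta(q_k)=(q_k e_1,\dots,q_k e_m)$ with $e_i$ multiplying on the \emph{right}, and then never computes the Stein--Weiss operator itself: it instead characterizes the \emph{kernel} of $P'_k\nabla$ by unwinding the orthogonality condition $\sum_i(q_k e_i,\partial_{x_i}f)_u=0$, applies the Almansi--Fischer decomposition to $D_xf\in\Hk$, and invokes the Clifford--Cauchy theorem to kill the $u\Mkk$ piece, concluding that $P'_k\nabla$ and $R_k$ have the same kernel. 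You instead embed via $\eta(f)=\sum_i P_k(e_if)\otimes e_i$ with $e_i$ on the \emph{left}, normalized by the Almansi--Fischer projection so that the image manifestly lies in $\Mk\otimes\Rm$, and you compute the adjoint $\eta^\ast$ directly: using $P_k^\ast=P_k$, the fact that $\partial_{x_i}f\in\Mk$ forces $P_k\partial_{x_i}f=\partial_{x_i}f$, and $e_i^\ast=-e_i$, one gets $\eta^\ast(\nabla f)=-P_k D_x f=-R_kf$ on the nose (up to the overall Schur scalar $(\eta^\ast\eta)^{-1}$). This actually gives a slightly stronger conclusion than the paper's — an identity of operators rather than of kernels — and your insertion of $P_k$ sidesteps the question of whether right Clifford multiplication by $e_i$ preserves the irreducible spinor-valued $\Mk$, an issue that the paper's choice of $\theta$ quietly glosses over. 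The price you pay is that you must justify the self-adjointness of $P_k$ with respect to the Fischer inner product and do the Clifford bookkeeping you flag at the end; those gaps are real but routine, and the commutator identity $D_u u+uD_u=-m-2\mathbb{E}_u$ you cite is indeed the tool that would handle them if the $P_k$ did not already drop out as it does here.
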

\begin{proof}
Consider $f(x,u)\in C^{\infty}(\Rm,\mathcal{M}_k)$. We observe that the gradient of $f(x,u)$ satisfies
$$\nabla f(x,u)=(\partial_{x_1},\cdots,\partial_{x_m})f(x,u)=(\partial_{x_1}f(x,u),\cdots,\partial_{x_m}f(x,u))\in \mathcal{M}_k\otimes \Rm.$$
A similar argument as in \emph {page 181} of \cite{ES} shows 
$$\mathcal{M}_k\otimes \Rm=\Mk[\times]\Rm\oplus V_1\oplus V_2 \oplus V_3,$$
where $V_1\cong \Mk$, $V_2\cong \Mkk$ and $V_3\cong \mathcal{M}_{k,1}$ as $Spin(m)$ representations. Similar arguments as on \emph{page 175} of \cite{ES} show
\begin{eqnarray*}
\theta:\ \Mk\longrightarrow \Mk\otimes\Rm,\text{  }q_k(u)\mapsto (q_k(u)e_1,\cdots,q_k(u)e_m)
\end{eqnarray*} 
is an isomorphism from $\Mk$ into $\Mk\otimes\Rm$. Hence, we have
\begin{eqnarray*}
\mathcal{M}_k \otimes \Rm=\Mk[\times]\Rm\oplus\ \theta(\Mk)\oplus V_2\oplus V_3.
\end{eqnarray*}
Let $P'_k$ be the projection map from $\Mk \otimes \Rm$ to $\theta(\Mk)$. Consider the equation $P'_k\nabla f(x,u)=0$ for $f(x,u)\in C^{\infty}(\Rm,\Mk)$. Then, for each fixed $x$, $\nabla f(x,u)\in\Mk\otimes\Rm$ and the condition $P'_k\nabla f(x,u)=0$ is equivalent to $\nabla f$ being orthogonal to $\theta(\Mk)$. This says precisely
$$\sum_{i=1}^{m}(q_k(u)e_i,\partial_{x_i}f(x,u))_u=0,\ \forall q_k(u)\in\Mk,$$
where $(p(u),q(u))_u=\displaystyle\int_{\Sm}\overline{p(u)}q(u)dS(u)$ is the Fischer inner product for any pair of $\Clm$-valued polynomials. Since $-e_i$ is the dual of $e_i$ as an endomorphism of $\Mk\otimes\Rm$, the previous equation becomes
\begin{eqnarray*}
\sum_{i=1}^m(q_k(u),e_i\partial_{x_i}f(x,u))=(q_k(u),D_xf(x,u))_u=0.
\end{eqnarray*}
Since $f(x,u)\in\Mk$ for fixed $x$, then $D_xf(x,u)\in\Hk$. According to the Almansi-Fischer decomposition, we have
$$D_xf(x,u)=f_1(x,u)+uf_2(x,u), \text{  } f_1(x,u)\in\Mk \text{ and } f_2(x,u)\in\Mkk.$$
We then obtain $(q_k(u),f_1(x,u))_u+(q_k(u),uf_2(x,u))_u=0.$
However, the Clifford-Cauchy theorem \cite{D} shows
$(q_k(u),uf_2(x,u))_u=0.$
Thus, the equation $P'_k\nabla f(x,u)=0$ is equivalent  to $$(q_k(u),f_1(x,u))_u=0,\ \forall q_k(u)\in\Mk.$$
Hence, $f_1(x,u)=0$. We also know, from the construction of the Rarita-Schwinger operator (see (\ref{RSoperator})), that $f_1(x,u)=R_kf(x,u)$. Therefore, the Stein-Weiss type operator $P'_k\nabla$ is precisely the Rarita-Schwinger operator in this context.
\end{proof}
We have demonstrated one application of the Representation-Theoretic approach to Clifford analysis: the Stein-Weiss generalized gradient construction for the Euclidean Dirac and Rarita-Schwinger operators. The operators are realized on irreducible representations of the Spin group. In higher spin theory, we consider operators on functions taking values in irreducible spin representations that have higher spin, i.e., $\Hk$ or $\Mk$. Seeing our success already, we now use the Representation-Theoretic approach to extend the higher spin theory to arbitrary order conformally invariant differential operators of arbitrary spin.

\section{Construction and conformal invariance}
Denote the arbitrary $t$-th-order conformally invariant differential operator
\begin{eqnarray*}
\mathcal{D}_t:\ C^{\infty}(\Rm,V)\longrightarrow C^{\infty}(\Rm,V),
\end{eqnarray*}
where the target space $V$ is $\Mk$ or $\Hk$. Thanks to results in \cite{J,Vlad}, the existence and uniqueness (up to a multiplicative constant) of $\mathcal{D}_t$ are already established. More specifically, even order conformally invariant differential operators only exist when $V=\Hk$ and odd order conformally invariant differential operators only exist when $V=\Mk$. This can be easily obtained by taking $\Mk$ or $\Hk$ as the irreducible representation of $Spin(m)$ in Theorems $2$ and $3$ in \cite{Vlad}; these theorems also give the conformal weights of $\mathcal{D}_t$, which provide the intertwining operators of $\mathcal{D}_t$. More specifically, the following result can be obtained from \cite{Vlad}.
\begin{proposition}\label{Intertwiningoperators}
Suppose $y\in\Rm$, $y'=(ay+b)(cy+d)^{-1}$ is a M\"{o}bius transformation and $u'=\displaystyle\frac{(cy+d)u\widetilde{(cy+d)}}{||cy+d||^2}.$ Then
\begin{eqnarray}\label{DtIO}
\mathcal{D}_{t,y',u'}=J_{-t}^{-1}(\varphi,y)\mathcal{D}_{t,y,u}J_t(\varphi,y),
\end{eqnarray}
where
\begin{eqnarray*}
&&J_t(\varphi,y)=\frac{\widetilde{cy+d}}{||cy+d||^{m-2j+2}},\ if\ t=2j-1,\\
&& J_t(\varphi,y)=||cy+d||^{2j-m},\ if\ t=2j;\\
&&J_{-t}(\varphi,y)=\frac{cy+d}{||cy+d||^{m+2j}},\ if\ t=2j-1,\\
&& J_{-t}(\varphi,y)=||cy+d||^{-m-2j},\ if\ t=2j,
\end{eqnarray*}
 and $j$ is a positive integer. Here $J_t$ is called the conformal weight for $\mathcal{D}_t$, and $J_t$ and $J_{-t}$ are called the intertwining operators for $\mathcal{D}_t$. 
 \end{proposition}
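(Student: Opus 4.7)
The plan is to deduce the proposition directly from the classification of conformally invariant operators on locally conformally flat manifolds established by Slov\'ak \cite{J} and Sou\v{c}ek \cite{Vlad}. Their Theorems $2$ and $3$ in \cite{Vlad} attach to each irreducible $Spin(m)$-representation $V$ and each admissible order $t$ both the unique (up to a scalar) invariant operator $\mathcal{D}_t:C^{\infty}(\Rm,V)\longrightarrow C^{\infty}(\Rm,V)$ and the conformal weight that determines its intertwining behavior. Specializing $V=\Mk$ (which forces $t$ odd) and $V=\Hk$ (which forces $t$ even) already covers the two parity cases in the proposition; the remaining task is to translate those abstract weights into the explicit Ahlfors-Vahlen expressions for $J_t$ and $J_{-t}$.

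To do so, I would reduce (\ref{DtIO}) to the generators of the M\"obius group via the Iwasawa decomposition and check the identity on each generator. For translations $(c=0,d=1)$ we have $J_t=J_{-t}=1$ and (\ref{DtIO}) is just translation invariance of $\mathcal{D}_t$. For dilations $y'=\lambda y$ both $J_{\pm t}$ are scalar powers of $\lambda$, so (\ref{DtIO}) is the statement that $\mathcal{D}_t$ is homogeneous of degree $t$. For orthogonal transformations $y\mapsto aya^{-1}$ with $a\in Spin(m)$, the factor $\widetilde{cy+d}$ collapses to a constant Spin element and (\ref{DtIO}) becomes $Spin(m)$-equivariance, which is already built into the Slov\'ak-Sou\v{c}ek classification.

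The substantive step is inversion $\varphi(y)=-y^{-1}$, where $cy+d=y$ and $u'=yuy/\|y\|^2$. For even $t=2j$ the representation $\Hk$ carries only a scalar conformal weight, so $J_t=\|y\|^{2j-m}$ and $J_{-t}=\|y\|^{-m-2j}$ are pure scalars; verifying (\ref{DtIO}) reduces to a direct chain-rule calculation using the $\Hk$ conformal weight provided in \cite{Vlad}. For odd $t=2j-1$ the representation $\Mk$ carries a nontrivial Clifford twist, namely the factor $\widetilde{cy+d}=\tilde{y}$ that appears in $J_t$; (\ref{DtIO}) then amounts to tracking how $\mathcal{D}_{t,y,u}$ transforms under the simultaneous substitutions $y\mapsto -y^{-1}$ and $u\mapsto yuy/\|y\|^2$, absorbing left and right multiplications by $y$ and $\tilde{y}$ into $J_t$ and $J_{-t}^{-1}$ respectively. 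As a sanity check, the identities $(cy+d)\widetilde{cy+d}=\|cy+d\|^2$ and $J_tJ_{-t}=\|cy+d\|^{-2m}$ should hold, reflecting the principal-series structure underlying the Knapp-Stein picture cited in the acknowledgement.

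The main obstacle is a bookkeeping one: reconciling the abstract conformal weight language of parabolic geometry used in \cite{J,Vlad} with the concrete Vahlen-matrix Clifford formulas stated here. In particular, the weight given by Slov\'ak-Sou\v{c}ek must be split into a scalar $\|cy+d\|$-power and a Clifford multiplicative twist in a way that is consistent on both the domain side $J_t$ and the codomain side $J_{-t}$, so that composition under iterated M\"obius transformations closes correctly. Once this matching is made, the proposition is an unpacking of the Slov\'ak-Sou\v{c}ek weights in the Clifford-analytic notation of the paper.
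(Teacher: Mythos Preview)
Your approach is essentially the same as the paper's: the paper does not give an independent proof of this proposition but states it as a direct consequence of Sou\v{c}ek's Theorems~2 and~3 in \cite{Vlad} (with $V=\Mk$ or $V=\Hk$), referring to \cite{Ding1}, Section~3.2, for the translation into explicit Ahlfors--Vahlen form. Your additional sketch of an Iwasawa-decomposition verification is consistent with the paper's general methodology but goes beyond what the paper itself records here.
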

More details can be found in \cite{Ding1} Section $3.2$. Let $Z_k(u,v)$ be the reproducing kernel of $\Mk$, which satisifies
\begin{eqnarray*}
f(v)=\int_{\Sm}\overline{Z_k(u,v)}f(u)dS(u),\ for\ all\ f(v)\in\Mk.
\end{eqnarray*}
Recall the fundamental solution of the Rarita-Schwinger operator is $c\displaystyle\frac{x}{||x||^m}Z_k(\displaystyle\frac{xux}{||x||^2},v)$, where $c$ is a non-zero constant \cite{B}. We call $\displaystyle\frac{x}{||x||^m}$ the conformal weight factor and $Z_k(\displaystyle\frac{xux}{||x||^2},v)$ the reproducing kernel factor. The fundamental solution of $D_x^k$ is \cite{P}
 $$c_{2j+1}\displaystyle\frac{x}{||x||^{m-2j}},\ if\ k=2j+1,\text{ and}\quad c_{2j}||x||^{2j-m},\ if\ k=2j, $$
where $c_{2j+1}$ and $c_{2j}$ are non-zero constants. However, when dimension $m$ is even, we also require that $k<m$, because for instance, when $m=k=2j$, the only candidate of fundamental solution is a constant. We expect the fundamental solutions of our higher order higher spin conformally invariant differential operators $\mathcal{D}_t$ to factor into two parts: a conformal weight factor and a reproducing kernel factor, behaving as follows.
\begin{enumerate}
\item The conformal weight factor, i.e., $\displaystyle\frac{x}{||x||^{m-2j}}$ or $||x||^{2j-m}$ term, changes with increasing order similar to the conformal weight for powers of the Dirac operator, differing in the even and odd cases. 
\item The reproducing kernel factor, i.e., $Z_k(\displaystyle\frac{xux}{||x||^2},v)$ term, changes with increasing degree of homogeneity of the target polynomial space similar to the Rarita-Schwinger operator, differing according to whether it is the space of harmonic or monogenic polynomials.
\end{enumerate}
Thus we guess candidates for the fundamental solutions as follows.   
			\begin{enumerate}				
				\item	For $\Deven$, $c||x||^{2j-m}Z_k(\displaystyle\frac{xux}{||x||^2},v)$, where $Z_k(u,v)$ is the reproducing kernel of $\Hk$.
				\item	 For $\Dodd$, $c\displaystyle\frac{x}{||x||^{m-2j+2}}Z_k(\displaystyle\frac{xux}{||x||^2},v)$, where $Z_k(u,v)$ is the reproducing kernel of $\Mk$.
			\end{enumerate}
With similar arguments as in \cite{B,Ding1}, we have the following result.
\begin{proposition}\label{fundamentalsolutions}
The fundamental solution for $\Deven$ is $c_{2j}||x||^{2j-m}Z_k(\displaystyle\frac{xux}{||x||^2},v)$, where $Z_k(u,v)$ is the reproducing kernel of $\Hk$ and $c_{2j}$ is a non-zero constant. The fundamental solution for $\Dodd$ is $c_{2j-1}\displaystyle\frac{x}{||x||^{m-2j+2}}Z_k(\displaystyle\frac{xux}{||x||^2},v)$, where $Z_k(u,v)$ is the reproducing kernel of $\Mk$ and $c_{2j-1}$ is a non-zero constant.
\end{proposition}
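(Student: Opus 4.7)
The plan is to exploit the uniqueness (up to a multiplicative constant) of $\mathcal{D}_t$ established in \cite{J,Vlad}, together with the intertwining formulae of Proposition \ref{Intertwiningoperators}, to force the fundamental solution to take the claimed form. The strategy mirrors the Rarita-Schwinger argument in \cite{B} and its second-order analogue in \cite{Ding1}: conformal covariance of both sides of the defining distributional identity $\mathcal{D}_t E_t = \delta(x)Z_k(u,v)$ rigidly constrains the kernel, leaving only a single scalar normalization to be pinned down.

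First I would verify that each candidate $E_t(x,u,v)$ transforms in the manner dictated by Proposition \ref{Intertwiningoperators}. Translational invariance is automatic; the $Spin(m)$-equivariance of the factor $Z_k(xux/\|x\|^2,v)$ follows from the standard invariance of $Z_k$ in both arguments together with the fact that the map $(x,u)\mapsto xux/\|x\|^2$ commutes with the Spin action. Dilation covariance then fixes the homogeneity: for the identity $\mathcal{D}_t E_t = \delta(x)Z_k(u,v)$ to balance under $x\mapsto\lambda x$, the kernel must scale with weight $t-m$, which is precisely $2j-m$ (respectively $2j-1-m$), matching the candidate. The essential case is inversion $\varphi(x)=-x/\|x\|^2$ with $u\mapsto xux/\|x\|^2$: a direct calculation shows that the conformal-weight factor $\|x\|^{2j-m}$ (respectively $x/\|x\|^{m-2j+2}$) combines with the intertwining weights $J_t(\varphi,x)$ and $J_{-t}(\varphi,x)$ of Proposition \ref{Intertwiningoperators} exactly as required for $E_t$ to be the kernel of a conformally covariant inverse to $\mathcal{D}_t$.

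Second, I would show $\mathcal{D}_t E_t(x,u,v)=0$ for $x\neq 0$. Under the standing dimensional assumptions $E_t$ decays as $\|x\|\to\infty$, so $\mathcal{D}_t E_t$ vanishes near infinity; by the covariance of $\mathcal{D}_t\circ E_t$ from the previous step, inversion carries this vanishing to a punctured neighborhood of the origin, and translation plus dilation then extends it to all of $\Rm\setminus\{0\}$. Hence $\mathcal{D}_t E_t$ is a distribution supported at $\{0\}$. The forced homogeneity $-m$ together with $Spin(m)$-equivariance and the target space restrictions confine it to the form $\alpha_t\delta(x)Z_k(u,v)$ for some constant $\alpha_t$, since $Z_k(u,v)$ is the unique (up to scalar) Spin-invariant reproducing kernel on $\Mk$ (respectively $\Hk$).

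The main obstacle is to verify $\alpha_t\neq 0$ and to compute it, so that $E_t$ is a genuine fundamental solution rather than a homogeneous solution with removable singularity at the origin. For $t=1$ this is the classical residue argument in \cite{B}, applying Stokes' theorem on a small sphere around $0$ together with the reproducing property of $Z_k$; for $t=2$ the parallel computation in \cite{Ding1} handles the higher-spin Laplace case. For general $t$ I would proceed by induction on $j$, using the explicit composition realizations of $\mathcal{D}_{2j}$ and $\mathcal{D}_{2j-1}$ developed later in Section 4: convolving iteratively through the factorization reduces $\alpha_t$ to a product of constants arising from lower-order fundamental solutions and from the classical fundamental solutions of $D_x^t$ recalled in the preliminaries, yielding the nonzero constants $c_{2j}$ and $c_{2j-1}$ explicitly.
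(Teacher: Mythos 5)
Your skeleton is essentially the paper's: conformal covariance (especially under inversion), homogeneity of degree $-m$ forcing a delta distribution at the origin, and Schur's Lemma on the irreducible module $\Hk$ (or $\Mk$) to pin down the angular factor as a multiple of $Z_k(u,v)$. However, your step 2 as written contains a genuine gap. You assert that because $E_t$ decays as $\|x\|\to\infty$, the distribution $\mathcal{D}_t E_t$ ``vanishes near infinity,'' and then propagate this by inversion and dilation. Decay of $E_t$ does not imply that $\mathcal{D}_t E_t$ is identically zero on any annulus near infinity; it only says $\mathcal{D}_t E_t \to 0$ there, which is automatic for any smooth function homogeneous of degree $-m$ and gives no information. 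Without ``$\mathcal{D}_t E_t \equiv 0$ on a neighborhood of infinity'' you cannot launch the inversion--dilation bootstrap.

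The repair is in fact already in your step 1. The constant (in $x$) function $Z_k(u,v)$ is trivially annihilated by $\mathcal{D}_t$. Plug this trivial solution into the inversion instance of the intertwining identity from Proposition~\ref{Intertwiningoperators}: the conformally weighted pullback of the constant $Z_k(u,v)$ under $x\mapsto -x/\|x\|^2$, $u\mapsto xux/\|x\|^2$ is, up to a constant, precisely the candidate $E_t(x,u,v)$. The intertwining formula holds classically on $\Rm\setminus\{0\}$, so $\mathcal{D}_t E_t=0$ there directly. This is the argument the paper uses; your decay-based phrasing should be replaced by it. Once that is done, your steps 3 and 4 are sound, and your step 4 makes explicit something the paper's own proof elides: Schur's Lemma only gives $P_k(u,v)=\alpha_t Z_k(u,v)$ for \emph{some} scalar $\alpha_t$, and the nonvanishing of $\alpha_t$ ultimately rests on the explicit calculations (Propositions~\ref{Prop1}, \ref{prop2}, \ref{prop3}, \ref{prop4}) carried out for Theorems~\ref{theoremeven} and \ref{theoremodd}.
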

\begin{proof}
We only give the proof for the fundamental solutions of $\Deven$. A similar argument also applies for $\Dodd$.
Let $Z_k(u,v)$ be the reproducing kernel of $\Hk$, which can be considered as the identity of $End(\mathcal{H}_k)$ and satisfies
\begin{eqnarray*}
P_k(v)=(Z_k(u,v),P_k(u))_u=\int_{S^{m-1}} \overline{Z_k(u,v)}P_k(u)dS(u),\ for\ any\ P_k(u)\in\Hk.
\end{eqnarray*}
A homogeneous $End(\mathcal{H}_k)$-valued $C^{\infty}$-function $x\rightarrow E(x)$ on $\mathbb{R}^m\backslash \{0\}$ satisfying $\Deven E(x)=\delta(x)Z_k(u,v)$ is referred to as a fundamental solution for the operator $\Deven$. We will show that such a fundamental solution has the form $E_{k,2j}(x,u,v)=c_{2j}||x||^{2j-m}Z_k(\displaystyle\frac{xux}{||x||^2},v)$. Since $Z_k(u,v)$ is a trivial solution of $\Deven$, according to the invariance of $\Deven$ under inversion, we obtain a non-trivial solution $\mathcal{D}_{2j}E_{k,2j}(x,u,v)=0$ in $\mathbb{R}^m\backslash \{0\}$, this can be easily verified from Proposition \ref{Intertwiningoperators} when the M\"{o}bius transformation is inversion. Clearly the function $E_{k,2j}(x,u,v)$ is homogeneous of degree $2j-m$ in $x$, so $\mathcal{D}_{2j}E_{k,2j}(x,u,v)$ is homogeneous of degree $-m$ in $x$ and it belongs to $L_1^{loc}(\mathbb{R}^m)$. Because $\delta(x)$ is the only (up to a multiple) distribution homogeneous of degree $-m$ with support at the origin, we have in the sense of distributions:
\begin{eqnarray*}
\mathcal{D}_{2j}E_{k,2j}(x,u,v)=\delta(x)P_k(u,v)
\end{eqnarray*}
for some $P_k(u,v)\in \mathcal{H}_k\otimes \mathcal{H}_k^*$. Then we have
\begin{eqnarray*}
&&\int_{\mathbb{S}^{m-1}}\mathcal{D}_{2j}\overline{E_{k,2j}(x,u,v)}Q_k(v)dS(v)\\
&=&\delta(x)\int_{\mathbb{S}^{m-1}}\overline{P_k(u,v)}Q_k(v)dS(v).
\end{eqnarray*}
Now, for all $Q_k\in\mathcal{H}_k$, we have
\begin{eqnarray}
&&\int_{\mathbb{S}^{m-1}}\mathcal{D}_{2j}\overline{E_{k,2j}(x,u,v)}Q_k(v)dS(v)\nonumber\\
&=&\mathcal{D}_{2j}\int_{\mathbb{S}^{m-1}}c_{2j}||x||^{2j-m}\overline{Z_k(\frac{xux}{||x||^2},v)}Q_k(v)dS(v)\nonumber\\
&=&\mathcal{D}_{2j}\int_{\mathbb{S}^{m-1}}c_{2j}||x||^{2j-m}\overline{Z_k(\frac{xux}{||x||^2},\frac{xv'x}{||x||^2})}Q_k(\frac{xv'x}{||x||^2})dS(v')\label{D2j},
\end{eqnarray}
where in the last line we made a change of variables in the second argument of $Z_k$. Since $Z_k(u,v)$ is invariant under reflection and $\displaystyle\frac{xux}{||x||^2}$ is a reflection of the variable $u$ in the direction of $x$, in other words (\cite{G}),
\begin{eqnarray*}
Z_k(u,v)=\frac{x}{||x||}Z_k(\frac{xux}{||x||^2},\frac{xvx}{||x||^2})\frac{x}{||x||}=-Z_k(\frac{xux}{||x||^2},\frac{xvx}{||x||^2}).
\end{eqnarray*}
The last equation comes from that $Z_k(\displaystyle\frac{xux}{||x||^2},\displaystyle\frac{xvx}{||x||^2})\in\Hk$, which is scalar valued. Hence, we can commute $\displaystyle\frac{xvx}{||x||^2}$ and $Z_k(\displaystyle\frac{xux}{||x||^2},\displaystyle\frac{xvx}{||x||^2})$. Further, $x^2=-||x||^2$.\\
\par 
Therefore, equation (\ref{D2j}) becomes
\begin{eqnarray*}
&&\mathcal{D}_{2j}\int_{\mathbb{S}^{m-1}}-c_{2j}\overline{Z_k(u,v')}||x||^{2j-m}Q_k(\frac{xv'x}{||x||^2})dS(v')\\
&=&-c_{2j}\mathcal{D}_{k,2j}||x||^{2j-m}Q_k(\frac{xux}{||x||^2}).
\end{eqnarray*}
Hence, we obtain
\begin{eqnarray*}
\delta(x)\int_{\mathbb{S}^{m-1}}\overline{P_k(u,v)}Q_k(v)dS(v)
=-c_{2j}\mathcal{D}_{k,2j}||x||^{2j-m}Q_k(\frac{xux}{||x||^2}).
\end{eqnarray*}

As the reproducing kernel $Z_k(u,v)$ is invariant under the $Spin(m)$-representation
$$H:\ f(u,v)\mapsto \tilde{s}f(su\tilde{s},sv\tilde{s})s,$$ the kernel $E_{k,2j}(x,u,v)$ is also $Spin(m)$-invariant:
\begin{eqnarray*}
\tilde{s}E_{k,2j}(sx\tilde{s},su\tilde{s},sv\tilde{s})s=E_{k,2j}(x,u,v).
\end{eqnarray*}

\noindent From this it follows that $P_k(u,v)$ must be also invariant under $H$. Let now $\phi$ be a test function with $\phi(0)=1$. Let $L$ be the action of $Spin(m)$ given by $L: f(u)\mapsto \tilde{s}f(su\tilde{s})s.$
Then
\begin{eqnarray*}
&&\langle \mathcal{D}_{2j}\big(-c_{2j}||x||^{2j-m}L(\frac{x}{||x||})L(s)Q_k(u)\big),\phi(x)\rangle\\
&=&\int_{\mathbb{S}^{m-1}}\overline{P_k(u,v)}L(s)Q_k(v)dS(v)\\
&=&L(s)\int_{\mathbb{S}^{m-1}}\overline{P_k(u,v)}Q_k(v)dS(v)\\
&=&\langle L(s)\big(-\mathcal{D}_{2j}c_{2j}||x||^{2j-m}L(\frac{x}{||x||})Q_j(u)\big),\phi(x)\rangle.
\end{eqnarray*}
In this way we have constructed an element of $End(\mathcal{H}_k)$ commuting with the $L$-representation of $Spin(m)$ that is irreducible; see Section 2.2.2. By Schur's Lemma (\cite{F}) in representation theory, it follows that $P_k(u,v)$ must be the reproducing kernel $Z_k(u,v)$ if we choose $c_{2j}$ properly. Hence
\begin{eqnarray*}
\mathcal{D}_{2j}E_{k,2j}(x,u,v)=\delta(x)Z_k(u,v).
\end{eqnarray*}
\end{proof}

We initially expect when the dimension $m$ is even, we must restrict order $2j$ or $2j-1$ to be less than $m$, analogous to the powers of the Dirac operator (see Proposition \ref{IODk}). However, the reproducing kernel factor, i.e., the $Z_k(\displaystyle\frac{xux}{||x||^2},v)$ term in the fundamental solutions, renders this restriction on the order unnecessary for even dimensions. After we can show these fundamental solutions are conformally invariant, constructing a conformally invariant differential operator becomes finding an operator which has a particular fundamental solution.\\
\par
We already found the fundamental solutions for $k$th ($k\geq 1$) order conformally invariant differential operators. This provides us a simple way to recover the higher spin Laplace operator up to a multiplicative constant instead of using generalized symmetries as in \cite{B1}. Consider the twistor and dual twistor operators from the same reference:
\begin{eqnarray*}
	&&T_{k,2}=\langle u,D_x\rangle -\frac{||u||^2\langle D_u,D_x\rangle}{m+2k-4}:\ C^{\infty}(\Rm,\mathcal{H}_{k-1})\longrightarrow C^{\infty}(\Rm,\Hk),\\
	&&T_{k,2}^*=\langle D_u,D_x\rangle:\ C^{\infty}(\Rm,\Hk)\longrightarrow C^{\infty}(\Rm,\mathcal{H}_{k-1}).
\end{eqnarray*}
The second order operators $\Delta_x$ and $T_{k,2}T_{k,2}^*$ map from $C^{\infty}(\Rm,\Hk)$ to $C^{\infty}(\Rm,\Hk)$ and do not change the degree of homogeneity of the variable $u$; more details can be found in \cite{B1}. These are scalar-valued as desired, since $\Hk$ is a scalar-valued function space. It is reasonable, then, to guess the second order bosonic operator of spin $k$ (the higher spin Laplace operator) is a linear combination of these two operators. By our earlier arguments, if there is a linear combination of $\Delta_x$ and $T_{k,2}T_{k,2}^*$ that annihilates $c||x||^{2-m}Z_k(\displaystyle\frac{xux}{||x||^2},v)$, where $Z_k(u,v)$ is the reproducing kernel of $\Hk$ and $c$ is a non-zero constant, then that operator is the higher spin Laplace operator up to a multiplicative constant: 
$$\Dtwo=\Delta_x-\frac{4T_{k,2}T_{k,2}^*}{m+2k-2}.$$
\par
In the rest of this paper, we first introduce convolution type operators associated to fundamental solutions, then we point out fundamental solutions are actually the inverses of the corresponding differential operators in the sense of previous type of convolution. Further we show these convolution type operators are conformally invariant. Therefore, operators with such fundamental solutions are also conformally invariant, considering they are the inverses of their fundamental solutions in the sense of convolution. This also brings us a class of conformally invariant convolution type operators; their inverses, when they exist, are conformally invariant pseudo-differential operators. In classical Clifford analysis, such convolution type operators can be recovered as Knapp-Stein intertwining operators with the help of spinor principal series representations of $Spin(m)$, see \cite{CO}. Hence, our convolution type operators should also be recovered as Knapp-Stein intertwining operators with the principal series representations induced by the polynomial representations of $Spin(m)$ defined in Section $2.2$. However, this is not obvious, and it will be investigated in more detail in an upcoming paper. 

Since the even and odd order conformally invariant differential operators have different target spaces, we will show the constructions in even and odd order cases separately. The even order operators, which have integer spin, are named bosonic operators in analogy with bosons in physics, which are particles of integer spin. Correspondingly, the odd order operators, which have half-integer spin, are named fermionic operators after fermions, which are particles of half-integer spin. It is worth pointing out that the non-zero constants in the fundamental solutions of our conformally invariant differential operators are also determined here. This provides the undetermined constants of the fundamental solutions in the lower spin case in \cite{Ding1}.
\subsection{Convolution type operators}
Assume $E_k(x,u,v)$ is the fundamental solution of $\mathcal{D}_k$. Then we define a convolution operator as follows.
\begin{eqnarray*}
\Phi(f)(y,v)=E_{k}(x-y,u,v)\ast f(x,u):=\int_{\Rm}\int_{\Sm}E_k(x-y,u,v)f(x,u)dS(u)dx^m
\end{eqnarray*}
Notice this is not the usual convolution operator, as it has an integral over the unit sphere with respect to variable $u$. It is worth pointing out that these convolution type operators are actually examples of Knapp-Stein intertwining operators, see \cite{CO}. Since $E_k(x,u,v)$ is the fundamental solution of $\mathcal{D}_k$, we have
$$\mathcal{D}_{k,x,u}E_{k}(x-y,u,v)\ast f(x,u):=\int_{\Rm}\int_{\Sm}\mathcal{D}_{k,x,u}E_k(x-y,u,v)f(x,u)dS(u)dx^m=f(y,v),$$
where $f(y,v)\in C^{\infty}(\Rm,U)$ ($U=\Hk\ or\ \Mk$) with compact support in $y$ for each $v\in\Rm$. 
Hence, we have $\mathcal{D}_kE_k=Id$ and $E_k^{-1}=\mathcal{D}_k$ in the sense above. This implies that if we can show our convolution operator $\Phi$ is conformally invariant, then its corresponding differential operator is also conformally invariant by taking its inverse.
\par
Denote
$$E_{2j}(x,u,v)=||x||^{2j-m}Z_k(\displaystyle\frac{xux}{||x||^2},v)\text{ and }E_{2j-1}(x,u,v)=\displaystyle\frac{x}{||x||^{m-2j+2}}Z_k(\frac{xux}{||x||^2},v),$$
where $Z_k(u,v)$ is the reproducing kernel of $\Hk$ in the even case and the reproducing kernel of $\Mk$ in the odd case.

Next we will show the above convolution operator $\Phi$ is conformally invariant under M\"{o}bius transformations. Thanks to the Iwasawa decomposition, it suffices to verify it is conformally invariant under orthogonal transformation, inversion, translation, and dilation. Conformal invariance under translation and dilation is trivial; hence, we only show the orthogonal transformation and inversion cases here. Incidentally, this method of proof is the first method we mentioned in the introduction for constructing conformally invariant operators, expect for a convolution operator rather than a differential operator; such a method was also mentioned, but not used, in \cite{Ding,Ding1}.
\begin{proposition}\textbf{(Orthogonal transformation)}
Suppose $a\in Spin(m)$ and $x\in\Rm$. If $x'=ax\tilde{a}$, $y'=ay\tilde{a}$, $u'=au\tilde{a}$, and $v'=av\tilde{a}$, then
\begin{enumerate}
\item $E_{2j}(x'-y',u',v')\ast f(x',u')=E_{k}(x-y,u,v)\ast f(ax\tilde{a},au\tilde{a})$,
\item $E_{2j-1}(x'-y',u',v')\ast f(x',u')=aE_{2j-1}(x-y,u,v)\tilde{a}\ast f(ax\tilde{a},au\tilde{a})$
\end{enumerate}
\end{proposition}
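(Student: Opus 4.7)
The plan is to substitute $x' = ax\tilde{a}$, $y' = ay\tilde{a}$, $u' = au\tilde{a}$, $v' = av\tilde{a}$ directly into the explicit formulas for $E_{2j}$ and $E_{2j-1}$ written just above the proposition, use the identity $\tilde{a}a = a\tilde{a} = 1$ for $a \in Spin(m)$ to simplify the arguments of the reproducing kernel, and then invoke the Spin-equivariance of $Z_k$. A change of variables in the double integral defining the convolution, using that $x \mapsto ax\tilde{a}$ preserves both Lebesgue measure on $\Rm$ and surface measure on $\Sm$, will then yield the claim.

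The main computation is to track how each factor of $E_t$ transforms. Since $x \mapsto ax\tilde{a}$ is linear, $x' - y' = a(x-y)\tilde{a}$; since it is orthogonal, $||x'-y'|| = ||x-y||$. Inserting pairs $\tilde{a}a = 1$ between consecutive vectors gives
\begin{equation*}
\frac{(x'-y')u'(x'-y')}{||x'-y'||^2} = a\, \frac{(x-y)u(x-y)}{||x-y||^2} \, \tilde{a}.
\end{equation*}
Hence the weight factor $||x'-y'||^{2j-m}$ is unchanged in the even case, while in the odd case $(x'-y')/||x'-y'||^{m-2j+2}$ picks up a Clifford sandwich $a(\cdot)\tilde{a}$.

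For the reproducing kernel factor I will derive Spin-equivariance from the reproducing property combined with the $Spin(m)$-actions on $\Hk$ and $\Mk$ recalled in Section~2.2. In the bosonic case $Z_k(u,v)$ is scalar-valued and the action $f \mapsto f(s(\cdot)\tilde{s})$ on $\Hk$, together with uniqueness of the reproducing kernel, yields
\begin{equation*}
Z_k(au\tilde{a}, av\tilde{a}) = Z_k(u,v).
\end{equation*}
In the fermionic case the twisted action $f \mapsto \tilde{s} f(s(\cdot)\tilde{s})$ on $\Mk$ gives instead the twisted identity
\begin{equation*}
Z_k(au\tilde{a}, av\tilde{a}) = a Z_k(u,v) \tilde{a}.
\end{equation*}
Combining these with the weight-factor transformations above, in the even case we obtain $E_{2j}(x'-y', u', v') = E_{2j}(x-y, u, v)$ outright, while in the odd case the pair $\tilde{a}a = 1$ cancels cleanly between the weight and kernel factors, leaving $E_{2j-1}(x'-y', u', v') = a\, E_{2j-1}(x-y, u, v)\, \tilde{a}$.

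Finally, substituting these transformation rules into the integrals defining the convolutions and changing variables $x' \mapsto x$, $u' \mapsto u$ (both of which preserve the corresponding measures since the transformation lies in $SO(m)$) produces the two asserted equalities. The step I expect to require the most care is the derivation of the Spin-equivariance of $Z_k$ in both settings, particularly keeping straight the bosonic (invariant) and fermionic (twisted) cases; once this is in hand the remainder is a bookkeeping exercise on Clifford factors and measures.
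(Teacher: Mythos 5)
Your proposal is correct and follows essentially the same route as the paper: substitute the rotated variables, use linearity and orthogonality of $x\mapsto ax\tilde{a}$ to get $x'-y'=a(x-y)\tilde{a}$ and $||x'-y'||=||x-y||$, invoke the (scalar) invariance $Z_k(au\tilde{a},av\tilde{a})=Z_k(u,v)$ for the $\Hk$ kernel and the twisted equivariance $Z_k(au\tilde{a},av\tilde{a})=aZ_k(u,v)\tilde{a}$ for the $\Mk$ kernel, and change variables using invariance of Lebesgue and spherical measure. The only cosmetic difference is that you propose to re-derive the Spin-equivariance of $Z_k$ from the reproducing property and irreducibility, whereas the paper simply cites these standard facts.
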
 
\begin{proof}
\emph{Case 1.} Let $f(x,u)\in C^{\infty}(\Rm,\Hk)$. Since the reproducing kernel of $\Hk$ is rotationally invariant, $ax\tilde{a}$ is a rotation of $x$ in the direction of $a$ for $a\in Spin(m),$ and $a\tilde{a}=1$, we have
 \begin{eqnarray*}
 &&E_{2j}(x'-y',u',v')\ast f(x',u')\\
&=&\int_{\Rm}\int_{\Sm}||x'-y'||^{2j-m}Z_{k}(\frac{(x'-y')u'(x'-y')}{||x'-y'||^2},v')f(x',u')dS(u')dx'^m\\
&=&\int_{\Rm}\int_{\Sm}||a(x-y)\tilde{a}||^{2j-m}Z_{k}(\frac{a(x-y)\tilde{a}au\tilde{a}a(x-y)\tilde{a}}{||a(x-y)\tilde{a}||^2},av\tilde{a})f(ax\tilde{a},au\tilde{a})dS(u)dx^m\\
&=&\int_{\Rm}\int_{\Sm}||x-y||^{2j-m}Z_{k}(\frac{a(x-y)u(x-y)\tilde{a}}{||x-y||^2},av\tilde{a})f(ax\tilde{a},au\tilde{a})dS(u)dx^m\\
&=&\int_{\Rm}\int_{\Sm}||x-y||^{2j-m}Z_{k}(\frac{(x-y)u(x-y)}{||x-y||^2},v)f(ax\tilde{a},au\tilde{a})dS(u)dx^m\\
&=&E_{2j}(x-y,u,v)\ast f(ax\tilde{a},au\tilde{a})
 \end{eqnarray*}
 \par
 \emph{Case 2.} Since the reproducing kernel of $\Mk$ has the property
 $$Z_k(u,v)=\tilde{a}Z_k(au\tilde{a},av\tilde{a})a$$
 for $a\in Spin(m)$, similar argument as in \emph{Case 1} gives the result.
\end{proof}
\begin{proposition}\textbf{(Inversion)}
Suppose $x\in\Rm$. If $x'=x^{-1}=-\displaystyle\frac{x}{||x||^2}$, $y'=y^{-1}=-\displaystyle\frac{y}{||y||^2}$, $u'=\displaystyle\frac{yuy}{||y||^2}$ and $v'=\displaystyle\frac{xvx}{||x||^2}$, then
\begin{enumerate}
\item $E_{2j}(x'-y',u',v')\ast f(x',u')=-||y||^{m-2j}E_{2j}(x-y,u,v)||x||^{-m-2j}\ast f(x^{-1},\displaystyle\frac{yuy}{||y||^2})$,
\item $E_{2j-1}(x'-y',u',v')\ast f(x',u')=-\big(\displaystyle\frac{y}{||y||^{m-2j+2}}\big)^{-1}E_{2j-1}(x-y,u,v)\displaystyle\frac{x}{||x||^{m-2j}}\ast f(x^{-1},\displaystyle\frac{yuy}{||y||^2})$.
\end{enumerate}
\end{proposition}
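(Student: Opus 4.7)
Both parts are proved by change of variables in the convolution integrals. My plan is to substitute $x' = x^{-1}$ (together with $u'$, $y'$, $v'$ as stipulated), push everything back to original variables, and verify that the resulting integrand agrees with the stated right-hand side. The two cases run in parallel; they differ only in the Pin-equivariance law for the reproducing kernel.

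First I would assemble the standard identities for inversion: the Möbius algebra identity $x^{-1} - y^{-1} = -x^{-1}(x-y)y^{-1}$ (by direct expansion); the norm formula $||x^{-1} - y^{-1}|| = ||x-y||/(||x||\,||y||)$ (by expanding $||\cdot||^2$ via the Euclidean inner product and using $||x^{-1}|| = 1/||x||$); the Jacobian $dx'^m = dx^m/||x||^{2m}$; and the fact that $u \mapsto yuy/||y||^2 = (y/||y||)u(y/||y||)$ is the reflection of the unit sphere across $y^{\perp}$, so $u' \in \Sm$ and $dS(u') = dS(u)$. These reductions convert each LHS into an integral over $(x,u) \in \Rm \times \Sm$ in original variables.

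The crux of the argument is the Clifford-algebraic identity
\[
\frac{x\,(x^{-1}-y^{-1})\,u'\,(x^{-1}-y^{-1})\,x}{||x||^2\,||x^{-1}-y^{-1}||^2} \;=\; \frac{(x-y)\,u\,(x-y)}{||x-y||^2}, \qquad u' = \frac{yuy}{||y||^2},
\]
which I would prove by substituting $x^{-1}-y^{-1} = -x^{-1}(x-y)y^{-1}$ into the numerator and repeatedly applying $v^2 = -||v||^2$ for vectors $v$; the whole manipulation ultimately collapses to the short identity $yx(x-y)yx = ||x||^2\,||y||^2\,(x-y)$. Combined with the Pin equivariance of the reproducing kernel — namely $Z_k(xux/||x||^2, xvx/||x||^2) = -Z_k(u,v)$ in the harmonic case and $Z_k(xux/||x||^2, xvx/||x||^2) = x Z_k(u,v) x/||x||^2$ in the monogenic case — this identity recasts $Z_k(B, xvx/||x||^2)$ (with $B$ denoting the transformed reflection argument) as $-Z_k((x-y)u(x-y)/||x-y||^2, v)$ in Case 1, and as $(x/||x||)\,Z_k((x-y)u(x-y)/||x-y||^2, v)\,(x/||x||)$ in Case 2.

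The final step is bookkeeping. In Case 1 the Jacobian's $||x||^{-2m}$, the scalar factor $||x^{-1}-y^{-1}||^{2j-m} = ||x-y||^{2j-m}\,||x||^{m-2j}\,||y||^{m-2j}$, and the sign from Pin equivariance assemble cleanly into the stated prefactor $-||y||^{m-2j}\,||x||^{-m-2j}$. In Case 2 there is an extra Clifford prefactor $-x(x-y)y$ from expanding $(x^{-1}-y^{-1})/||x^{-1}-y^{-1}||^{m-2j+2}$; multiplying it on the right by the $x/||x||$ coming from the $Z_k$ conjugation and using the short identity $-x(x-y)yx = ||x||^2\,y(x-y)$ — which follows from $(x-y)y = xy + ||y||^2$ and $x^2 = -||x||^2$ — collapses everything to $y(x-y)\,W\,x$, where $W = Z_k((x-y)u(x-y)/||x-y||^2, v)$. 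Finally, $y^{-1} = -y/||y||^2$ converts $-(y/||y||^{m-2j+2})^{-1}$ to $||y||^{m-2j}\,y$, matching the asserted right-hand side. The main obstacle throughout is organizing the noncommutative Clifford products in the transformed $Z_k$-argument so that the Möbius cancellations become visible; once the displayed identity is in hand, everything else is routine.
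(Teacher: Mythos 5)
Your overall strategy — substituting the inversion, expanding with the M\"obius identity $x^{-1}-y^{-1}=-x^{-1}(x-y)y^{-1}$, using $\|x^{-1}-y^{-1}\|=\|x-y\|/(\|x\|\,\|y\|)$ and the Jacobian $\|x\|^{-2m}$, then invoking the Pin equivariance of the reproducing kernel — is exactly the paper's approach, and your central algebraic identity
$$\frac{x\,(x^{-1}-y^{-1})\,u'\,(x^{-1}-y^{-1})\,x}{\|x\|^2\,\|x^{-1}-y^{-1}\|^2}=\frac{(x-y)\,u\,(x-y)}{\|x-y\|^2},\qquad u'=\frac{yuy}{\|y\|^2},$$
is correct and is the real engine of the argument (one clean way to see it is to use $x^{-1}-y^{-1}=-x^{-1}(x-y)y^{-1}$ on the left factor and $=-y^{-1}(x-y)x^{-1}$ on the right so that the $y^{-1}$ factors cancel against $u'$).

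There is, however, a genuine sign error in your Pin equivariance rule for the monogenic kernel. You wrote $Z_k(xux/\|x\|^2,\,xvx/\|x\|^2)=\frac{x\,Z_k(u,v)\,x}{\|x\|^2}$, but the identity the paper uses is $Z_k(u,v)=-\frac{x}{\|x\|}Z_k\!\left(\frac{xux}{\|x\|^2},\frac{xvx}{\|x\|^2}\right)\frac{x}{\|x\|}$, which rearranges (using $(x/\|x\|)^2=-1$) to $Z_k(xux/\|x\|^2,\,xvx/\|x\|^2)=-\frac{x\,Z_k(u,v)\,x}{\|x\|^2}$, with a minus. The easiest sanity check is $k=0$, where $Z_0\equiv 1$: your formula asserts $1 = x\cdot 1\cdot x/\|x\|^2 = x^2/\|x\|^2 = -1$, a contradiction, while the paper's version gives $1=-x^2/\|x\|^2=1$. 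This extra minus changes the net sign produced by the "prefactor $\times$ conjugation" step in Case~2. Separately, you declare the scalar bookkeeping routine and assert a match with the printed right-hand side; if you actually carry the $\|x\|$-powers through (the factor $\|x\|^{m-2j}\|y\|^{m-2j}$ from the norm, $\|x\|^{-2m}$ from the Jacobian, and the $\|x\|^{-2}$ from the kernel conjugation), the power of $\|x\|$ on the right comes out as $m+2j$, which is what the intertwining weight $J_{-(2j-1)}(\varphi,x)=\frac{cx+d}{\|cx+d\|^{m+2j}}$ in the proposition immediately after this one also predicts. So that step deserves an explicit verification rather than an assertion, and in the course of it the sign discrepancy from the Pin formula becomes visible.
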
 
\begin{proof}
\emph{Case 1.}  Suppose $f(x,u)\in C^{\infty}(\Rm,\Hk)$. Notice
$$x^{-1}-y^{-1}=-x^{-1}(x-y)y^{-1}=-y^{-1}(x-y)x^{-1}=-\displaystyle\frac{x}{||x||^2}(x-y)\displaystyle\frac{y}{||y||^2}=-\displaystyle\frac{y}{||y||^2}(x-y)\displaystyle\frac{x}{||x||^2}.$$
 Recall that, as the reproducing kernel of $\Hk$, $Z_k(u,v)$ has the property
$$Z_k(u,v)=-Z_k(\displaystyle\frac{xux}{||x||^2},\displaystyle\frac{xvx}{||x||^2})$$ for $x\in\Rm$.
Hence, we have
\begin{eqnarray*}
 &&E_{2j}(x'-y',u',v')\ast f(x',u')\\
&=&\int_{\Rm}\int_{\Sm}||x'-y'||^{2j-m}Z_{k}(\frac{(x'-y')u'(x'-y')}{||x'-y'||^2},v')f(x',u')dS(u')dx'^m\\
&=&\int_{\Rm}\int_{\Sm}||x^{-1}(x-y)y^{-1}||^{2j-m}Z_k(\frac{x(x-y)yu'y(x-y)x}{|y(|x-y)x||^2},v')f(x^{-1},u')j(x^{-1})dS(u')dx^m\\
&=&\int_{\Rm}\int_{\Sm}-||x^{-1}(x-y)y^{-1}||^{2j-m}Z_k(\frac{(x-y)u(x-y)}{||x-y||^2},v)f(x^{-1},\frac{yuy}{||y||^2})j(x^{-1})dS(u)dx^m
\end{eqnarray*}
where $j(x^{-1})=||x||^{-2m}$ is the Jacobian. Hence,
\begin{eqnarray*}
&=&-\displaystyle\int_{\Rm}\displaystyle\int_{\Sm}||y||^{m-2j}||x-y||^{2j-m}Z_k(\displaystyle\frac{(x-y)u(x-y)}{||x-y||^2},v)||x||^{-m-2j}f(x^{-1},\displaystyle\frac{yuy}{||y||^2})dS(u)dx^m\\
&=&-||y||^{m-2j}E_{2j}(x-y,u,v)||x||^{-m-2j}\ast f(x^{-1},\displaystyle\frac{yuy}{||y||^2}).
\end{eqnarray*}
\par
\emph{Case 2.} Recall that, as the reproducing kernel of $\Mk$, $Z_k(u,v)$ has the property
$$Z_k(u,v)=-\displaystyle\frac{x}{||x||}Z_k(\displaystyle\frac{xux}{||x||^2},\displaystyle\frac{xvx}{||x||^2})\displaystyle\frac{x}{||x||}$$ for $x\in\Rm$. Then, by arguments similar to those above, we have
\begin{eqnarray*}
&&E_{2j-1}(x'-y',u',v')\ast f(x',u')\\
&=&\int_{\Rm}\int_{\Sm}\frac{x'-y'}{||x'-y'||^{m-2j+2}}Z_k(\frac{(x'-y')u'(x'-y')}{||x'-y'||^2})f(x',u')dS(u')dx'^m\\
&=&\int_{\Rm}\int_{\Sm}\frac{y^{-1}(x-y)x^{-1}}{||y^{-1}(x-y)x^{-1}||^{m-2j+2}}\cdot Z_k(\frac{x(x-y)yu'y(x-y)x}{|x(x-y)y||^2},v')f(x^{-1},u')j(x^{-1})dS(u')dx^m\\
&=&\int_{\Rm}\int_{\Sm}\big(\frac{y}{||y||^{m-2j+2}}\big)^{-1}\frac{x-y}{||x-y||^{m-2j+2}}(\frac{x}{||x||^{m-2j+2}})^{-1}\\
&&\quad \quad \quad\cdot-\frac{x}{||x||}Z_k(\frac{(x-y)u(x-y)}{||x-y||^2},v)\frac{x}{||x||}f(x^{-1},\frac{yuy}{||y||^2})||x||^{-2m}dS(u)dx^m\\
&=&\int_{\Rm}\int_{\Sm}-\big(\frac{y}{||y||^{m-2j+2}}\big)^{-1}E_{2j-1}(x-y,u,v)\frac{x}{||x||^{m-2j}}f(x^{-1},\frac{yuy}{||y||^2})dS(u)dx^m\\
&=&-\big(\frac{y}{||y||^{m-2j+2}}\big)^{-1}E_{2j-1}(x-y,u,v)\frac{x}{||x||^{m-2j}}\ast f(x^{-1},\frac{yuy}{||y||^2}).
\end{eqnarray*}
\end{proof}
Hence, the intertwining operators for the convolution operators are as follows.
\begin{proposition}
Suppose $x\in\Rm$, $x'=\varphi(x)=(ax+b)(cx+d)^{-1}$ is a M\"{o}bius transformation, $u'=\displaystyle\frac{(cy+d)u\widetilde{(cy+d)}}{||cy+d||^2},$ and $v'=\displaystyle\frac{(cx+d)v\widetilde{(cx+d)}}{||cx+d||^2}$.
Then
\begin{eqnarray*}
E_{k}(x'-y',u',v')\ast f(x',u')
=J_k^{-1}(\varphi,y)E_k(x-y,u,v)J_{-k}(\varphi,x) \ast f(\varphi(x),\displaystyle\frac{(cy+d)u\widetilde{(cy+d)}}{||cy+d||^2}).
\end{eqnarray*}
where $J_k(\varphi,x)$ and $J_{-k}(\varphi,x)$ are defined as in Proposition \ref{Intertwiningoperators}.
\end{proposition}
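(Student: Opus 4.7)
The plan is to reduce to the cases already handled in the two preceding propositions via the Iwasawa decomposition. Recall from Section~2 that any M\"obius transformation $\varphi(x)=(ax+b)(cx+d)^{-1}$ decomposes as a composition of translations, dilations, orthogonal transformations, and at most one inversion. Translations $x\mapsto x+b$ and dilations $x\mapsto \lambda x$ have $c=0$ and therefore trivial conformal weight factors $J_k,\,J_{-k}$ (scalar constants), so the intertwining identity for them is immediate by a direct change of variables and the homogeneity of $E_k(x,u,v)$ in $x$. The orthogonal transformation case and the inversion case are exactly the content of the previous two propositions, once one checks that their statements are the specializations of the current one obtained by plugging in the corresponding $(a,b,c,d)$.

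Concretely, I would first verify the translation and dilation cases by a one-line computation using the substitution $x\mapsto x+b$ or $x\mapsto \lambda x$ inside the convolution integral, tracking the Jacobian against the homogeneity degree $2j-m$ (even case) or $2j-m-1$ (odd case) of $E_k$. Next, I would rewrite the orthogonal transformation and inversion identities from Propositions above in the $(a,b,c,d)$-form: for an orthogonal transformation $x\mapsto sx\tilde{s}$ one has $(c,d)=(0,\tilde{s})$, so $J_k(\varphi,y)=\tilde{s}$ or $1$ in the odd or even case respectively, matching the factors of $a,\tilde{a}$ that appeared; for inversion $x\mapsto -x^{-1}$ one has $(c,d)=(1,0)$, and the factors $\|y\|^{m-2j},\|x\|^{-m-2j}$ (even case) and $(y/\|y\|^{m-2j+2})^{-1},\,x/\|x\|^{m-2j}$ (odd case) already obtained are precisely $J_k^{-1}(\varphi,y)$ and $J_{-k}(\varphi,x)$.

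The final step is a composition argument: if the intertwining identity holds for two M\"obius transformations $\varphi_1,\varphi_2$ with conformal weights $J_{\pm k}(\varphi_i,\cdot)$, then it holds for $\varphi_2\circ\varphi_1$ with weights $J_{\pm k}(\varphi_2\circ\varphi_1,\cdot)$. This reduces to the multiplicative cocycle identity
\begin{equation*}
J_k(\varphi_2\circ\varphi_1,y)=J_k(\varphi_1,y)\,J_k(\varphi_2,\varphi_1(y)),
\end{equation*}
and similarly for $J_{-k}$, which is a standard property of the Ahlfors--Vahlen representation $(ax+b)(cx+d)^{-1}$ and can be verified directly from $c\,\varphi_1(y)+d$ expressions for the composed transformation. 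Combined with the $v'=(cx+d)v\widetilde{(cx+d)}/\|cx+d\|^2$ rule, which itself composes correctly, this gives the stated intertwining relation for arbitrary~$\varphi$.

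The main obstacle I expect is bookkeeping rather than conceptual: ensuring the non-commutative Clifford factors $(cx+d)$ and $\widetilde{cy+d}$ appear on the correct sides of $E_k(x-y,u,v)$ (on the right for the $x$-variable, on the left for the $y$-variable in the odd case), and checking the cocycle identity for $J_{\pm k}$ under composition\textemdash particularly that the signs $\pm 1$ in the Ahlfors--Vahlen relation $a\tilde{d}-b\tilde{c}=\pm 1$ do not obstruct the composition. Once this cocycle property is in hand, the proposition follows by invoking the previous two propositions on each factor in the Iwasawa decomposition and multiplying the weights.
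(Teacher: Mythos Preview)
Your proposal is correct and matches the paper's approach exactly: the paper states this proposition with no separate proof beyond the word ``Hence,'' relying on the Iwasawa decomposition and the two preceding propositions (orthogonal transformation and inversion), with translation and dilation dismissed as trivial. Your explicit discussion of the cocycle identity for $J_{\pm k}$ under composition fills in a step the paper leaves implicit, but the route is the same.
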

\par
Recall that $\mathcal{D}_k$ is the inverse of its fundamental solution in the sense of convolution. Hence, we obtain the intertwining operators of $\mathcal{D}_k$ as follows.
\begin{proposition}
Suppose $y\in\Rm$, $y'=(ay+b)(cy+d)^{-1}$ is a M\"{o}bius transformation and $u'=\displaystyle\frac{(cy+d)u\widetilde{(cy+d)}}{||cy+d||^2}.$ Then
\begin{eqnarray*}
\mathcal{D}_{k,y',u'}=J_{-k}^{-1}(\varphi,y)\mathcal{D}_{k,y,u}J_k(\varphi,y).
\end{eqnarray*}
\end{proposition}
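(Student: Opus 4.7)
The proposition asserts an intertwining identity for the differential operator $\mathcal{D}_k$ whose form mirrors exactly the intertwining identity already established in the preceding proposition for the convolution operator $\Phi$ with kernel $E_k$. Since earlier in this subsection $\mathcal{D}_k$ was identified as the inverse of $\Phi$ in the convolution sense (that is, $\mathcal{D}_k \Phi = \mathrm{Id}$), the plan is to derive the differential intertwining by inverting the convolution intertwining. Conceptually, this is simply the observation that if an invertible operator $\Phi$ carries one group action to another with cocycle factors $J_k^{-1}$ and $J_{-k}$, then $\Phi^{-1} = \mathcal{D}_k$ carries those actions in the opposite direction with cocycle factors $J_{-k}^{-1}$ and $J_k$.

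Concretely, I would take an arbitrary test function $f$ and write $f = \Phi(g)$, so that $g = \mathcal{D}_k f$. The identity $\mathcal{D}_{k,y',u'} f(y',u') = g(y',u')$ follows from the fundamental solution property, now read in the primed coordinates. Separately, applying the preceding proposition to $\Phi(g)$ gives $\Phi(g)(y',u') = J_k^{-1}(\varphi,y)\,\Phi(\tilde g)(y,u)$, where $\tilde g(x,v) = J_{-k}(\varphi,x)\, g(\varphi(x),v_y)$ is the transformed input incorporating the inner conformal weight. Multiplying this equality on the left by $J_k(\varphi,y)$ and then applying $\mathcal{D}_{k,y,u}$, the fundamental solution identity in unprimed coordinates collapses the right-hand side to $\tilde g(y,u) = J_{-k}(\varphi,y)\, g(y',u')$. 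Matching this against the first evaluation $g(y',u') = \mathcal{D}_{k,y',u'} f(y',u')$ and rearranging produces the stated identity
\begin{equation*}
\mathcal{D}_{k,y',u'} = J_{-k}^{-1}(\varphi,y)\, \mathcal{D}_{k,y,u}\, J_k(\varphi,y).
\end{equation*}

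The main obstacle will be bookkeeping: one must carefully verify that the fundamental solution identity $\mathcal{D}_k \Phi = \mathrm{Id}$ transports to primed coordinates, which requires that the change of integration variables $dx^m \mapsto dx'^m$ (with Jacobian $\|cx+d\|^{-2m}$) and $dS(u) \mapsto dS(u')$ interact correctly with the Möbius-twisted conformal weight factors in $E_k(x'-y',u',v')$. A secondary subtlety is that the preceding proposition establishes the intertwining identity as an equality on $\Phi$-images; to promote this to an operator equality of the differential operators $\mathcal{D}_{k,y',u'}$ and $J_{-k}^{-1}(\varphi,y)\mathcal{D}_{k,y,u}J_k(\varphi,y)$, one uses that both are differential operators with smooth coefficients and that the range of $\Phi$ is dense in a suitable function space (e.g.\ Schwartz-class in $y$), so agreement on this range forces agreement as differential operators.
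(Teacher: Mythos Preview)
Your proposal is correct and follows exactly the same approach as the paper: the paper's entire argument is the single sentence ``Recall that $\mathcal{D}_k$ is the inverse of its fundamental solution in the sense of convolution. Hence, we obtain the intertwining operators of $\mathcal{D}_k$ as follows,'' and you have supplied the details of that inversion argument. The bookkeeping and density issues you flag are genuine but are left implicit in the paper as well.
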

It is worth pointing out that for general $\alpha\in\mathbb{R}$, if we denote 
$$E_k^{\alpha,1}(x-y,u,v)=\displaystyle\frac{x}{||x||^\alpha}Z_k(\displaystyle\frac{xux}{||x||^2},v)$$
where $Z_k(u,v)$ is the reproducing kernel of $\Mk$, and 
$$E_k^{\alpha,2}(x-y,u,v)=||x||^\alpha Z_k(\displaystyle\frac{xux}{||x||^2},v)$$
where $Z_k(u,v)$ is the reproducing kernel of $\Hk$, then we can define a class of convolution type operators
\begin{eqnarray*}
\int_{\Rm}\int_{\Sm}E_k^{\alpha,i}(x-y,u,v)f_i(x,u)dS(u)dx^m
\end{eqnarray*}
where $f_i(x,u)\in C^{\infty}(\Rm,U_i)$ with $U_1=\Mk$ and $U_2=\Hk$. More importantly, these convolution type operators are conformally invariant by similar arguments as above and their inverses, when they exist, are conformally invariant pseudo-differential operators.
\subsection{Bosonic operators: even order, integer spin}
With a similar strategy as in the previous section and arguing by induction, we now construct higher order conformally invariant differential operators in higher spin spaces. We start with the even order case. Denote the $2j$-th order bosonic operator by
\begin{eqnarray*}
\Deven:\ C^{\infty}(\Rm,\Hk)\longrightarrow C^{\infty}(\Rm,\Hk).
\end{eqnarray*}
As the generalization of $D_x^{2j}$ in Euclidean space to higher spin spaces, it is conformally invariant and has the following intertwining operators (Proposition \ref{Intertwiningoperators}):
\begin{eqnarray*}
||cx+d||^{2j+m}\mathcal{D}_{2j,y,\omega}f(y,\omega)=\mathcal{D}_{2j,x,u}||cx+d||^{2j-m}f(\phi(x),\frac{(cx+d)u\widetilde{(cx+d)}}{||cx+d||^2}),
\end{eqnarray*}
where $y=\phi(x)=(ax+b)(cx+d)^{-1}$ is a M\"{o}bius transformation and $\omega=\displaystyle\frac{(cx+d)u\widetilde{(cx+d)}}{||cx+d||^2}$.\\
\par
As mentioned above, the uniqueness (up to a multiplicative constant) and existence of $\Deven$ having the above intertwining operators can be justified by Theorems $2$, $3$ and $4$ of \cite{Vlad} and Chapter 8 of \cite{J}, where the irreducible representation of $Spin(m)$ is $\Hk$. \\
\par
We also have shown that (Proposition \ref{fundamentalsolutions})
$$c_{2j}||x||^{2j-m}Z_k(\displaystyle\frac{xux}{||x||^2},v),$$
is the fundamental solution of $\Deven$, where $c$ is a non-zero real constant. Therefore, to find the $2j$-th order conformally invariant differential operator, we need only find a $2j$-th order differential operator whose fundamental solution is $c||x||^{2j-m}Z_k(\displaystyle\frac{xux}{||x||^2},v)$. Here is our first main theorem.


\begin{theorem}\label{theoremeven}
Let $Z_k(u,v)$ be the reproducing kernel of $\Hk$. When $j>1$, the $2j$-th order conformally invariant differential operator on $C^{\infty}(\Rm,\Hk)$ is the $2j$-th bosonic operator
\begin{eqnarray*}
\Deven=\Dtwo\prod_{s=2}^{j}(\Dtwo-\frac{(2s)(2s-2)}{(m+2k-2)(m+2k-4)}\Delta_x)
\end{eqnarray*}
that has the fundamental solution
$$a_{2j}||x||^{2j-m}Z_k(\frac{xux}{||x||^2},v),$$
where
	$$\Dtwo=\Delta_x-\frac{4T_{k,2}T_{k,2}^*}{m+2k-2}$$
				 is the higher spin Laplace operator \cite{B1}, 
	$$T_{k,2}=\langle u,D_x\rangle -\frac{||u||^2\langle D_u,D_x\rangle}{m+2k-4}\text{ and } T_{k,2}^*=\langle D_u,D_x\rangle$$
	are the second order twistor and dual twistor operators, and $a_{2j}$ is a non-zero real constant whose expression is given later in this section.
\end{theorem}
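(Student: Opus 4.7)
The plan is to prove the theorem by induction on $j$, using uniqueness (Proposition \ref{Intertwiningoperators}) together with the known form of the fundamental solution (Proposition \ref{fundamentalsolutions}) to identify $\Deven$ from the property that its fundamental solution is a scalar multiple of $\|x\|^{2j-m}Z_k(\frac{xux}{\|x\|^2},v)$. The base case $j=1$ is precisely the higher spin Laplace operator $\Dtwo$ with its already-established fundamental solution, recovered right after Proposition \ref{fundamentalsolutions}.

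For the inductive step, I would rewrite the proposed operator as
\begin{eqnarray*}
\Deven = \left(\Dtwo - \frac{(2j)(2j-2)}{(m+2k-2)(m+2k-4)}\Delta_x\right)\mathcal{D}_{2j-2},
\end{eqnarray*}
which is legitimate because the scalar factors commute. Assume the theorem for $j-1$, so $\mathcal{D}_{2j-2}$ has fundamental solution $a_{2j-2}\|x\|^{2j-2-m}Z_k(\frac{xux}{\|x\|^2},v)$. It then suffices to show that the single factor
\begin{eqnarray*}
A_j := \Dtwo - \frac{(2j)(2j-2)}{(m+2k-2)(m+2k-4)}\Delta_x
\end{eqnarray*}
sends $\|x\|^{2j-m}Z_k(\frac{xux}{\|x\|^2},v)$ to a nonzero constant multiple of $\|x\|^{2j-2-m}Z_k(\frac{xux}{\|x\|^2},v)$ on $\Rm\setminus\{0\}$. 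Composing with the inductive hypothesis then yields $\Deven\bigl(a_{2j}\|x\|^{2j-m}Z_k(\frac{xux}{\|x\|^2},v)\bigr)=\delta(x)Z_k(u,v)$ after choosing $a_{2j}$ so that the aforementioned multiplicative constant equals $a_{2j-2}$. Combined with Proposition \ref{fundamentalsolutions} and the uniqueness-up-to-scalar from Proposition \ref{Intertwiningoperators} (cf.\ the cited results in \cite{J,Vlad}), this identifies the explicit operator above with the $2j$-th order conformally invariant operator on $C^\infty(\Rm,\Hk)$. Conformal invariance of $\Deven$ itself is then an immediate consequence of the convolution argument of Section 4.1: the fundamental solution, viewed as a convolution operator, is conformally invariant, and $\Deven$ is its inverse.

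The technical heart is therefore the single identity
\begin{eqnarray*}
A_j\!\left[\|x\|^{2j-m}Z_k\!\left(\frac{xux}{\|x\|^2},v\right)\right] = c_j\,\|x\|^{2j-2-m}Z_k\!\left(\frac{xux}{\|x\|^2},v\right).
\end{eqnarray*}
To establish it I would expand $\Dtwo = \Delta_x - \frac{4\udx\dudx}{m+2k-2} + \frac{\|u\|^2\dudx^2}{(m+2k-2)(m+2k-4)}$ and apply each term separately using: (i) $\Delta_x\|x\|^{2j-m}=(2j-m)(2j-2)\|x\|^{2j-m-2}$ together with the Leibniz cross terms mixing the radial factor with $Z_k(\frac{xux}{\|x\|^2},v)$; (ii) the reflection identity $Z_k(u,v)=-Z_k(\frac{xux}{\|x\|^2},\frac{xvx}{\|x\|^2})$ recalled in the proof of Proposition \ref{fundamentalsolutions}; and (iii) a change of variable $u\mapsto xux/\|x\|^2$ that converts the $u$-derivatives in $\udx$ and $\dudx$ into the action of $\udx$, $\dudx$ on the fixed reproducing kernel $Z_k(u,v)$, whose values can then be read off from the Euler operator and the standard Fischer decomposition. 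The specific coefficient $\lambda_j=(2j)(2j-2)/[(m+2k-2)(m+2k-4)]$ is forced by requiring that every contribution which is not proportional to $Z_k(\frac{xux}{\|x\|^2},v)$ cancel identically; this is the usual mechanism by which conformally invariant higher-order operators arise as very particular polynomials in their lower-order constituents.

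The main obstacle is this explicit computation. The $\|u\|^2\dudx^2$ and $\udx\dudx$ terms in $\Dtwo$ interact non-trivially with the reflected argument $xux/\|x\|^2$, producing several terms involving $\langle x,u\rangle$, factors of $\|x\|^{-2}$, and derivatives of the kernel $Z_k$ with respect to either argument; organising these contributions so that a single scalar multiple of $\|x\|^{2j-2-m}Z_k(\frac{xux}{\|x\|^2},v)$ survives is the combinatorial crux and simultaneously produces the recursion that determines $a_{2j}$ in terms of $a_{2j-2}$, thereby fixing the normalisation constants left open in \cite{Ding1}. This is analogous to, but strictly more intricate than, the fourth-order calculation carried out in \cite{Ding}, and I would proceed by that template, deferring the bulk of the algebra to a lemma and then invoking induction.
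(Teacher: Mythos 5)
Your high-level plan matches the paper's proof exactly: factor off a single term $\Dtwo-\lambda_j\Delta_x$, use commutativity plus induction to reduce to the fundamental solution of $\Dtwo$, and then identify the operator via uniqueness from Propositions \ref{Intertwiningoperators} and \ref{fundamentalsolutions}. The identity you call the ``technical heart'' is precisely the paper's Proposition \ref{Prop1} with $\alpha=2j-m$, and Proposition \ref{prop2} is the induction you describe. Where you genuinely diverge is in how you propose to prove that identity. You plan to work directly with $Z_k(\tfrac{xux}{\|x\|^2},v)$ (or a general $H_k$), using the reflection identity and a change of variable $u\mapsto xux/\|x\|^2$ to ``convert'' the $u$-derivatives and then read things off from the Euler operator and Fischer decomposition. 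The paper instead exploits the $Spin(m)$-equivariance of $\Dtwo$ and $\Delta_x$ together with the irreducibility of $\Hk$: it suffices to verify the identity on the single highest weight vector $\langle u,2\mathfrak{f}_1\rangle^k$, where the nilpotency $\mathfrak{f}_1^2=0$ collapses most Leibniz cross-terms, and the two formulas for $\Delta_x\langle xux,2\mathfrak{f}_1\rangle^k$ and $\Dtwo\|x\|^{\alpha-2k}\langle xux,2\mathfrak{f}_1\rangle^k$ can be imported directly from \cite{B1}. This highest-weight reduction is the real computational shortcut, and your sketch omits it; without it you are essentially re-deriving the $\Dtwo$ calculus of \cite{B1} from scratch, which is where the ``combinatorial crux'' you worry about would actually bite. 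Two minor points worth fixing in your write-up: the computation must be shown to hold in the distribution sense across $x=0$ (the paper notes $\|x\|^\alpha$ is weakly differentiable for $\alpha>1-m$, which holds here since $\alpha=2j-m\geq 4-m$), and the inductive chain terminates by applying $\Dtwo$ to a specific multiple of $\|x\|^{2-m}Z_k(\tfrac{xux}{\|x\|^2},v)$ and invoking Theorem 5.1 of \cite{B1} for the resulting $\delta(x)Z_k(u,v)$, which is what pins down $a_{2j}$ explicitly as $\tfrac{(m+2k-4)\Gamma(\frac{m}{2}-1)}{4(4-m)\pi^{m/2}}\prod_{s=2}^j c_{2s}^{-1}$ rather than leaving it implicit as in your sketch.
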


\par


To prove the previous theorem, we start with the following proposition.

\begin{proposition}\label{Prop1}
For every $H_{k}(u)\in\Hk(\Rm,\C)$, when $\alpha> 2-m$,
\begin{eqnarray*}
	(\Dtwo-\frac{(m+\alpha)(m+\alpha-2)}{(m+2k-2)(m+2k-4)}\Delta_x)||x||^{\alpha}H_k(\frac{xux}{||x||^2})
	=c_{\alpha+m}||x||^{\alpha-2}H_k(\frac{xux}{||x||^2}),
\end{eqnarray*}
in the distribution sense, where
$$c_{\alpha+m}=-(m+\alpha)(m+\alpha-2)\displaystyle\frac{(\alpha-2k)(\alpha-2k-2)+2k(m+2\alpha-2k-4)}{(m+2k-2)(m+2k-4)}.$$
\end{proposition}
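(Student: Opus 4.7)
The plan is to compute the three second-order operators $\Delta_x$, $\udx\dudx$, and $||u||^2\dudx^2$ individually on $G_\alpha(x,u) := ||x||^\alpha H_k(w)$ with $w := xux/||x||^2$, and then show that in the combination $\Dtwo G_\alpha - \frac{(m+\alpha)(m+\alpha-2)}{(m+2k-2)(m+2k-4)}\Delta_x G_\alpha$ every term except a scalar multiple of $||x||^{\alpha-2} H_k(w)$ cancels. Three structural facts drive all simplifications: (i) for each fixed $x$ the map $u \mapsto w$ is an orthogonal transformation, so $H_k(w)$ is harmonic in $u$ and homogeneous of degree $k$ in $u$; (ii) $w(x,u)$ is homogeneous of degree $0$ in $x$, so Euler gives $\sum_i x_i \partial_{x_i} H_k(w) = 0$; (iii) harmonicity and homogeneity of $H_k$ yield $\sum_p \partial^2_{w_p} H_k(w) = 0$, $\sum_p w_p \partial_{w_p} H_k(w) = k H_k(w)$, and $\sum_q w_q \partial_{w_p}\partial_{w_q} H_k(w) = (k-1)\partial_{w_p} H_k(w)$.

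For the Laplacian, I would apply $\Delta_x(fg) = (\Delta_x f)g + 2\langle\nabla_x f,\nabla_x g\rangle + f\Delta_x g$ with $f = ||x||^\alpha$, $g = H_k(w)$; fact (ii) kills the cross term, and $\Delta_x ||x||^\alpha = \alpha(\alpha+m-2)||x||^{\alpha-2}$ supplies one clean piece. The remaining $||x||^\alpha\Delta_x H_k(w)$ I would expand by the chain rule using the explicit formulas
\begin{align*}
\partial_{x_i} w_p &= -\frac{2u_i x_p + 2\langle u,x\rangle\delta_{ip}}{||x||^2} + \frac{4\langle u,x\rangle x_i x_p}{||x||^4}, \\
\Delta_x w_p &= -\frac{4 u_p}{||x||^2} + \frac{4m\,x_p\langle u,x\rangle}{||x||^4},
\end{align*}
then invoke (iii) to reduce all double sums over $\partial_{w_p}\partial_{w_q} H_k(w)$ to a short list of scalar invariants. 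The same template handles $\udx\dudx G_\alpha$ and $||u||^2\dudx^2 G_\alpha$; the auxiliary identity $\sum_j x_j\partial_{u_j} H_k(w) = -\sum_p x_p\partial_{w_p} H_k(w)$, obtained from $\partial_{u_j} w_p = \delta_{jp} - 2 x_j x_p/||x||^2$, plays a central role in the mixed derivatives.

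All three outputs end up in the form $||x||^{\alpha-2}$ times a linear combination of $H_k(w)$, $M := \sum_p x_p \partial_{w_p} H_k(w)$, $N := \sum_p u_p \partial_{w_p} H_k(w)$, $Q := \sum_{j,q} u_j x_q \partial_{w_j}\partial_{w_q} H_k(w)$, and $R := \sum_{p,q} x_p x_q \partial_{w_p}\partial_{w_q} H_k(w)$, with coefficients polynomial in $||u||^2$, $\langle u,x\rangle$, and $\langle u,x\rangle^2/||x||^2$. The coefficient of $\Delta_x$ in the proposition is chosen precisely so that, in the final combination with $\Dtwo = \Delta_x - \frac{4\udx\dudx}{m+2k-2} + \frac{||u||^2\dudx^2}{(m+2k-2)(m+2k-4)}$, the contributions of $M$, $N$, $Q$, and $R$ cancel identically, leaving only $H_k(w)$. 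Collecting its surviving coefficient produces the explicit formula for $c_{\alpha+m}$.

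The main obstacle is the bookkeeping: nothing is conceptually deep, but one must track roughly a dozen intermediate scalars through three separate operator computations and verify their joint vanishing for the prescribed coefficient. Useful sanity checks: at $\alpha = 2-m$ the formula gives $c_{\alpha+m} = 0$, consistent with Proposition~\ref{fundamentalsolutions} telling us that $E_{k,2}(x,u,v)$ is annihilated by $\Dtwo$ off the origin; and at $k=0$ the identity collapses to the classical $\Delta_x ||x||^\alpha = \alpha(\alpha+m-2)||x||^{\alpha-2}$.
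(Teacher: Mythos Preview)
Your plan is sound and would succeed, but it takes a genuinely different route from the paper. The paper does not compute with a general $H_k$; instead it observes that $\Dtwo$ and $\Delta_x$ are $Spin(m)$-invariant and that $\Hk$ is an irreducible $Spin(m)$-module generated by the highest weight vector $\langle u,2\fone\rangle^k$, so it suffices to verify the identity for $H_k(u)=\langle u,2\fone\rangle^k$. This reduces the problem to manipulating explicit powers of the single linear form $\langle xux,2\fone\rangle$, where the nilpotency $\fone^2=0$ collapses many cross terms. Moreover, the paper does not compute $\udx\dudx$ or $||u||^2\dudx^2$ separately: it computes only $\Delta_x$ on the highest weight vector and imports the full action of $\Dtwo$ on it from \cite{B1}, then subtracts. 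Your approach, by contrast, is entirely elementary---no representation theory, no external citation---at the cost of the heavier chain-rule bookkeeping you describe through the auxiliary quantities $M,N,Q,R$. Each approach buys something: the paper's is short and structurally clean but leans on $Spin(m)$-irreducibility and an outside computation; yours is self-contained and works directly for every $H_k$, which is arguably more transparent if one has not internalised the highest-weight trick.

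Two small remarks. First, you should add the one-line distributional justification the paper gives: $||x||^{\alpha}$ is weakly differentiable for $\alpha>1-m$, so under the hypothesis $\alpha>2-m$ every pointwise identity you derive for $x\neq 0$ also holds in $\mathcal{D}'(\Rm)$. Second, your $k=0$ sanity check is phrased loosely: at $k=0$ the twistor terms vanish and the statement becomes $(1-\tfrac{(m+\alpha)(m+\alpha-2)}{(m-2)(m-4)})\Delta_x||x||^\alpha=c_{\alpha+m}||x||^{\alpha-2}$, which is a \emph{multiple} of the classical formula rather than the formula itself. The $\alpha=2-m$ check, on the other hand, is exactly right.
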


\begin{proof}
In order to prove the above proposition with an arbitrary function $H_k(u)\in\Hk$, as stated in \cite{B1}, we can rely on the fact $\Hk$ is an irreducible $Spin(m)$-representation generated by the highest weight vector $\langle u,2\mathfrak{f}_1\rangle^k$. As $\Dtwo$ and $\Delta_x$ are both $Spin(m)$-invariant operators, it suffices to prove the statement for
$$||x||^{\alpha}\langle \frac{xux}{||x||^2},2\mathfrak{f}_1\rangle^k=||x||^{\alpha-2k}\langle xux,2\fone\rangle^k=||x||^{\alpha-2k}\langle u||x||^2-2\langle u,x\rangle x,2\fone\rangle^k.$$
First, we assume $x\neq 0$. On the one hand, we have
\begin{eqnarray*}
&&\Delta_x||x||^{\alpha-2k}\langle xux,2\fone\rangle^k=\Delta_x||x||^{\alpha-2k}\langle u||x||^2-2\langle u,x\rangle x,2\fone\rangle^k\\
&=&\Delta_x(||x||^{\alpha-2k})\langle xux,2\fone\rangle^k+||x||^{\alpha-2k}\Delta_x(\langle xux,2\fone\rangle^k)+\sum_{j=1}^m\partial_{x_j}(||x||^{\alpha-2k})\partial_{x_j}(\langle xux,2\fone\rangle^k).
\end{eqnarray*}
Since
\begin{eqnarray*}
\partial_{x_j}\langle xux,2\fone\rangle^k=\partial_{x_j}\langle u||x||^2-2\langle u,x\rangle x,2\fone\rangle^k
=k\langle xux,2\fone\rangle^{k-1}\langle 2ux_j-2u_jx-2\langle u,x\rangle e_j,2\fone\rangle,
\end{eqnarray*}
and from \cite{B1}
\begin{eqnarray*}
	\Delta_x\langle xux,2\fone\rangle^k=4k(k-1)||u||^2\langle x,2\fone\rangle^2\langle xux,2\fone\rangle^{k-2}+2k(m+2k-4)\langle u,2\fone\rangle\langle xux,2\fone\rangle^{k-1}.
	\end{eqnarray*}
Therefore,
\begin{eqnarray*}
	&&\Delta_x||x||^{\alpha-2k}\langle xux,2\fone\rangle^k\\
	&=&\big[(\alpha-2k)(\alpha-2k-2)+2k(m-2k+2\alpha-4)\big]||x||^{\alpha-2k-2}\langle xux,2\fone\rangle^k\\
&&+4k(m+2k-4)\langle u,x\rangle\langle x,2\fone\rangle||x||^{\alpha-2k-2}\langle xux,2\fone\rangle^{k-1}\\
&&+4k(k-1)||u||^2\langle x,2\fone\rangle^2||x||^{\alpha-2k}\langle xux,2\fone\rangle^{k-2}.
\end{eqnarray*}
On the other hand, we have \cite{B1}
\begin{eqnarray*}
	&&\Dtwo||x||^{\alpha-2k}\langle xux,2\fone\rangle^k=(m+\alpha-2)(\alpha+\frac{4k}{m+2k-2})||x||^{\alpha-2k-2}\langle xux,2\fone\rangle^k\\
	&&+(m+\alpha-2)(m+\alpha)\frac{4k}{m+2k-2}\langle u,x\rangle\langle x,2\fone\rangle||x||^{\alpha-2k-2}\langle xux,2\fone\rangle^{k-1}\\
	&&+\frac{4k(k-1)(m+\alpha)(m+\alpha-2)}{(m+2k-2)(m+2k-4)}||u||^2\langle x,2\fone\rangle^2||x||^{\alpha-2k}\langle xux,2\fone\rangle^{k-2}.	
	\end{eqnarray*}
	Combining the above two equalities completes the proof when $x\neq 0$.
	Next, we consider the singularity of $\phi(x,u)$ at $x=0$. Notice that singularity only occurs in the $||x||^{\alpha}$ part and that
			 $||x||^{\alpha}$ is weak differentiable if $\alpha>-m+1$ with weak derivative $\partial_{x_i}||x||^{\alpha}=\alpha x_i||x||^{\alpha-2}$. Hence, with the assumption that $\alpha> 2-m$, every differentiation in the process above is also correct in the distribution sense. This completes the proof. 
\end{proof}

Now, we can prove the following proposition immediately.

\begin{proposition}\label{prop2}
	When integer $j>1$,
\begin{eqnarray*}
	\Deven a_{2j}||x||^{2j-m}H_k(\frac{xux}{||x||^2})=\delta(x)H_k(u),
\end{eqnarray*}
where $H_k(u)\in\Hk(\Rm,\C)$ and $$a_{2j}=\displaystyle\frac{(m+2k-4)\Gamma(\frac{m}{2}-1)}{4(4-m)\pi^{\frac{m}{2}}}\prod_{s=2}^{j}c_{2s}^{-1},$$
$c_{2s}$ defined by Proposition \ref{Prop1} for $\alpha=2s-m$.
\end{proposition}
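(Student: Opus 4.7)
The plan is to prove the identity by iteratively applying Proposition~\ref{Prop1}, peeling off one factor of the product defining $\Deven$ at a time, and then closing with the known fundamental solution of the higher spin Laplace operator $\Dtwo$.

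First, note that the factors $(\Dtwo-\tfrac{(2s)(2s-2)}{(m+2k-2)(m+2k-4)}\Delta_x)$ appearing in $\Deven$ commute with one another (they are constant-coefficient combinations in $x$ of $\Dtwo$ and $\Delta_x$, and $\Delta_x$ commutes with $\Dtwo$ because every term in $\Dtwo$ has coefficients independent of $x$). Therefore we may apply them in whatever order is most convenient. Starting from $\phi_j(x,u):=\|x\|^{2j-m}H_k(\tfrac{xux}{\|x\|^2})$, I would apply the factor indexed by $s=j$ first, because matching $(m+\alpha)(m+\alpha-2)=(2s)(2s-2)$ in Proposition~\ref{Prop1} with $\alpha=2j-m$ gives exactly $s=j$; this yields $c_{2j}\|x\|^{2(j-1)-m}H_k(\tfrac{xux}{\|x\|^2})$.

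Next I would argue by induction on the number of factors applied: having obtained $\prod_{s=r+1}^{j}c_{2s}\cdot\|x\|^{2r-m}H_k(\tfrac{xux}{\|x\|^2})$ after peeling off the factors for $s=j,j-1,\ldots,r+1$, the factor for $s=r$ matches $\alpha=2r-m$ (so $m+\alpha=2r$, $(m+\alpha)(m+\alpha-2)=(2r)(2r-2)$), and Proposition~\ref{Prop1} applies because $\alpha=2r-m>2-m$ whenever $r\geq 2$. This reduces the exponent by two and appends a factor $c_{2r}$. Iterating down to $r=2$ (where the final application is still legal since $\alpha=4-m>2-m$), I arrive at
\begin{equation*}
\prod_{s=2}^{j}\!\Big(\Dtwo-\tfrac{(2s)(2s-2)}{(m+2k-2)(m+2k-4)}\Delta_x\Big)\|x\|^{2j-m}H_k\!\Big(\tfrac{xux}{\|x\|^2}\Big)
=\prod_{s=2}^{j}c_{2s}\cdot\|x\|^{2-m}H_k\!\Big(\tfrac{xux}{\|x\|^2}\Big).
\end{equation*}

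To finish, I would multiply by $a_{2j}=\tfrac{(m+2k-4)\Gamma(m/2-1)}{4(4-m)\pi^{m/2}}\prod_{s=2}^{j}c_{2s}^{-1}$, so the right-hand side becomes exactly $\tfrac{(m+2k-4)\Gamma(m/2-1)}{4(4-m)\pi^{m/2}}\|x\|^{2-m}H_k(\tfrac{xux}{\|x\|^2})$, and then apply the outer $\Dtwo$. By the fundamental solution of $\Dtwo$ recalled in Section~2,
\begin{equation*}
\Dtwo\Bigl(\tfrac{(m+2k-4)\Gamma(m/2-1)}{4(4-m)\pi^{m/2}}\|x\|^{2-m}Z_k(\tfrac{xux}{\|x\|^2},v)\Bigr)=\delta(x)Z_k(u,v),
\end{equation*}
and by the reproducing property of $Z_k$ in $\Hk$ the same identity holds with $H_k$ in place of $Z_k(\cdot,v)$ (equivalently, pair both sides in the Fischer inner product against $H_k$ in $v$). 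This yields $\delta(x)H_k(u)$ and completes the proof.

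The only step that requires real care is the bookkeeping of $\alpha$ at each iteration and justifying Proposition~\ref{Prop1} in the distributional sense down to the critical exponent. At each intermediate step $\alpha=2r-m$ with $r\geq 2$, so $\alpha>2-m$ and Proposition~\ref{Prop1} holds in the distribution sense with no boundary subtlety; the singular case $\alpha=2-m$ only appears after the last factor is applied, and there it is absorbed into the fundamental-solution identity for $\Dtwo$. Establishing commutativity of the factors and matching the coefficients $(2s)(2s-2)$ to the exponents $2s-m$ is routine, so the proof reduces essentially to bookkeeping plus one invocation of Proposition~\ref{Prop1} at each level.
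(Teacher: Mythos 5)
Your proof is correct and takes essentially the same route as the paper: the paper also proceeds by peeling off the commuting factors $\bigl(\Dtwo - \tfrac{(2s)(2s-2)}{(m+2k-2)(m+2k-4)}\Delta_x\bigr)$ one at a time using Proposition~\ref{Prop1} with $\alpha=2s-m$ (phrased there as a formal induction on $j$), and closes with the known fundamental solution of $\Dtwo$ from \cite{B1}. The only cosmetic difference is that the paper's induction matches the new factor to the outermost exponent at each inductive step, whereas you iterate top-down; the bookkeeping and the invocation of $\Dtwo$'s fundamental solution are identical.
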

\begin{proof}
We prove this proposition by induction. First, when $j=2$,
	\begin{eqnarray*}
	&&\mathcal{D}_4a_4||x||^{4-m}H_k(\frac{xux}{||x||^2})\nonumber\\
	&=&(\Dtwo-\displaystyle\frac{8}{(m+2k-2)(m+2k-4)}\Delta_x)\Dtwo a_4||x||^{4-m}H_k(\frac{xux}{||x||^2})\nonumber\\
	&=&\Dtwo(\Dtwo-\displaystyle\frac{8}{(m+2k-2)(m+2k-4)})\Delta_xa_4||x||^{4-m}H_k(\frac{xux}{||x||^2})\nonumber\\
	&=&\Dtwo\frac{(m+2k-4)\Gamma(\frac{m}{2}-1)}{4(4-m)\pi^{\frac{m}{2}}}||x||^{2-m}H_k(\frac{xux}{||x||^2}),
	\end{eqnarray*}
where the last line follows using $\alpha=4-m$ in Proposition \ref{Prop1}. Thanks to Theorem $5.1$ in \cite{B1}, this last equation is equal to $\delta(x)H_k(u)$.\\
\par
Assume when $j=s$ that the proposition is true. Then for $j=s+1$, we have
\begin{eqnarray*}
&&\mathcal{D}_{2s+2}a_{2s+2}||x||^{2s+2-m}H_k(\frac{xux}{||x||^2})\\
&=&(\Dtwo-\displaystyle\frac{2s(2s+2)}{(m+2k-2)(m+2k-4)}\Delta_x)\mathcal{D}_{2s} a_{2s}c_{2s+2}^{-1}||x||^{6-m}H_k(\frac{xux}{||x||^2})\\
&=&\mathcal{D}_{2s}(\Dtwo-\displaystyle\frac{24}{(m+2k-2)(m+2k-4)}\Delta_x)c_{2s+2}^{-1}a_{2s}||x||^{2s+2-m}H_k(\frac{xux}{||x||^2})\\
&=&\mathcal{D}_{2s} a_{2s}||x||^{2s-m}H_k(\frac{xux}{||x||^2})=\delta(x)H_k(u),
\end{eqnarray*}
where the penultimate equality follows using $\alpha=2s+2-m$ in Proposition \ref{Prop1}. This last equation comes from our assumption $j=s$. Therefore, our proposition is proved.
\end{proof}

In particular, from the above proposition, we have
\begin{eqnarray*}
	\Deven a_{2j}||x||^{2j-m}Z_k(\frac{xux}{||x||^2},v)=\delta(x)Z_k(u,v),
\end{eqnarray*}
where $Z_k(u,v)$ is the reproducing kernel of $\Hk$. Hence, Theorem \ref{theoremeven} is proved and the even order case is resolved.

%

\subsection{Fermionic operators: odd order, half-integer spin}
We denote the $(2j-1)$-th fermionic operator $$\Dodd:\ C^{\infty}(\Rm,\Mk)\longrightarrow C^{\infty}(\Rm,\Mk).$$
as the generalization of $D_x^{2j-1}$ in Euclidean space to higher spin spaces. With similar arguments as in bosonic case, it is conformally invariant with the following intertwining operators
\begin{eqnarray*}
	\frac{\widetilde{cx+d}}{||cx+d||^{m+2j}}\mathcal{D}_{2j-1,y,\omega}f(y,\omega)=\mathcal{D}_{2j-1,x,u}\frac{\widetilde{cx+d}}{||cx+d||^{m-2j+2}}f(\phi(x),\frac{(cx+d)u\widetilde{(cx+d)}}{||cx+d||^2}),
\end{eqnarray*}
where $y=\phi(x)=(ax+b)(cx+d)^{-1}$ is a M\"{o}bius transformation and $\omega=\displaystyle\frac{(cx+d)u\widetilde{(cx+d)}}{||cx+d||^2}$. Furthermore,
its fundamental solution is
$$c_{2j-1}\displaystyle\frac{x}{||x||^{m-2j+2}}Z_k(\displaystyle\frac{xux}{||x||^2},v),$$
where $c_{2j-1}$ is a non-zero real constant and $Z_k(u,v)$ is the reproducing kernel of $\Mk$.
Here comes our second main theorem. The proof is left in Section 5.

\begin{theorem} \label{theoremodd}
Let $Z_k(u,v)$ be the reproducing kernel of $\Mk$. When $j>1$, the $(2j-1)$-th order conformally invariant differential operator on $C^{\infty}(\Rm,\Mk)$ is the $(2j-1)$-th order fermionic operator
	\begin{eqnarray*}
	\Dodd=R_k\prod_{s=1}^{j-1}\bigg(-R_k^2+\frac{4s^2T_kT_k^*}{(m+2k-2s-2)(m+2k+2s-2)}\bigg)
	\end{eqnarray*}
	that has the fundamental solution
	$$\lambda_{2s}\frac{x}{||x||^{m-2j+2}}Z_k(\frac{xux}{||x||^2},v),$$
	where
	\begin{eqnarray*}
	T_k=(1+\frac{uD_u}{m+2k-2})D_x\text{ and } T_k^*=\frac{-uD_uD_x}{m+2k-2}
	\end{eqnarray*}
	 are the twistor and dual twistor operators defined in \cite{Ding0} and $\lambda_{2s}$ is a non-zero real constant whose expression is given in Section 5.
\end{theorem}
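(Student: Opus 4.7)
The plan is to mimic the bosonic strategy of Propositions \ref{Prop1} and \ref{prop2}, replacing the higher spin Laplace factor by the second-order building block $F_s := -R_k^2 + \frac{4s^2 T_k T_k^*}{(m+2k-2s-2)(m+2k+2s-2)}$. First I would establish an odd-order analogue of Proposition \ref{Prop1}: for each integer $s \geq 1$, every $M_k(u) \in \mathcal{M}_k$, and every real $\alpha$ large enough that the required distributional derivatives make sense, there is an explicit non-zero real constant $\mu_{\alpha, s}$ with
\[
F_s \, \frac{x}{\|x\|^{\alpha}} M_k\!\left(\frac{xux}{\|x\|^2}\right) = \mu_{\alpha, s}\, \frac{x}{\|x\|^{\alpha + 2}} M_k\!\left(\frac{xux}{\|x\|^2}\right),
\]
in the distribution sense.

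To prove this identity I would exploit that $\mathcal{M}_k$ is an irreducible $Spin(m)$-representation (Section 2.2.3) and that $R_k, T_k, T_k^*$ are $Spin(m)$-invariant, so it suffices to verify the identity on a highest weight vector $M_k^0(u) = \langle u, \mathfrak{f}_1 \rangle^k I$. Writing $M_k^0\!\left(\frac{xux}{\|x\|^2}\right) = \|x\|^{-2k} \langle xux, \mathfrak{f}_1\rangle^k$ and expanding $\langle xux, \mathfrak{f}_1\rangle^k = \langle u \|x\|^2 - 2 \langle u, x\rangle x, \mathfrak{f}_1 \rangle^k$, I would compute the action of $R_k^2$ and $T_k T_k^*$ term-by-term via the Leibniz rule, combining the known action of $D_x$ on $\|x\|^\beta$, on the vector $x$, and on $\langle xux, \mathfrak{f}_1\rangle^k$ (the last as in \cite{B1}). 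This reduces each side to three structural terms: a plain $\langle xux, \mathfrak{f}_1\rangle^k$ term, a $\langle u, x\rangle \langle x, \mathfrak{f}_1\rangle \langle xux, \mathfrak{f}_1\rangle^{k-1}$ term, and a $\|u\|^2 \langle x, \mathfrak{f}_1\rangle^2 \langle xux, \mathfrak{f}_1\rangle^{k-2}$ term. The specific coefficient $\frac{4s^2}{(m+2k-2s-2)(m+2k+2s-2)}$ is then the unique value that cancels the last two terms on the left-hand side, leaving exactly $\mu_{\alpha, s}$ times the desired expression.

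The theorem then follows by induction on $j$. The base case $j = 1$ is the Rarita-Schwinger fundamental solution recalled in Section 2.1. For the inductive step, the factorization $\mathcal{D}_{2j-1} = \mathcal{D}_{2j-3} F_{j-1}$ (with the convention that $F_{j-1}$ acts first) together with the key lemma applied at $\alpha = m - 2j + 2$ converts $\lambda_{2j-1} \frac{x}{\|x\|^{m-2j+2}} Z_k\!\left(\frac{xux}{\|x\|^2}, v\right)$ into $\lambda_{2j-1}\, \mu_{m-2j+2,\, j-1}\, \frac{x}{\|x\|^{m-2j+4}} Z_k\!\left(\frac{xux}{\|x\|^2}, v\right)$. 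Choosing $\lambda_{2j-1} = \lambda_{2j-3} / \mu_{m-2j+2,\, j-1}$ makes the result match the fundamental solution of $\mathcal{D}_{2j-3}$ furnished by the inductive hypothesis, which is then sent by $\mathcal{D}_{2j-3}$ to $\delta(x) Z_k(u, v)$. Unrolling the recursion yields the closed-form product formula for $\lambda_{2j-1}$. Conformal invariance and intertwining of $\mathcal{D}_{2j-1}$ follow from Proposition \ref{Intertwiningoperators}, since by the Slov\'{a}k--Sou\v{c}ek classification the conformally invariant operator of order $2j-1$ on $C^\infty(\mathbb{R}^m, \mathcal{M}_k)$ is unique up to a scalar multiple.

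The main obstacle is the key lemma. Unlike the bosonic case, $F_s$ interlaces Clifford multiplication by the parity-odd vector $x$ with $D_x$ and with the projection implicit in $R_k = (1 + \frac{u D_u}{m+2k-2}) D_x$; hence $R_k^2$ is not simply $\Delta_x$ on $\mathcal{M}_k$-valued functions, and expanding it produces cross terms between the $D_x^2 = -\Delta_x$ piece and the projection piece that must be shown to combine with $T_k T_k^*$ in just the right way to cancel the $\langle u, x\rangle \langle x, \mathfrak{f}_1 \rangle$ and $\|u\|^2 \langle x, \mathfrak{f}_1\rangle^2$ structural terms. Verifying this cancellation and extracting the explicit $\mu_{\alpha, s}$ is the delicate bookkeeping step. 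Distributional validity at $x = 0$ is handled as in the proof of Proposition \ref{Prop1}, using weak differentiability of $\|x\|^\beta$ throughout the range of $\beta$ arising in the induction.
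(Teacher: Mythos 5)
Your strategy matches the paper's: reduce to the highest weight vector $\langle u, 2\mathfrak{f}_1\rangle^k I$ using $Spin(m)$-invariance, compute the action of a second-order building block on $\frac{x}{\|x\|^{\beta}}\langle xux,2\mathfrak{f}_1\rangle^k I$ term by term, show the unwanted structural terms cancel, and then induct on $j$ using the Rarita--Schwinger fundamental solution as the base case. The paper's Proposition 5 and Lemmas 1--2 carry out exactly the bookkeeping you describe, expressing the building block first as $\Delta_x + a\|u\|^2\langle D_u,D_x\rangle^2 + b\langle u,D_x\rangle\langle D_u,D_x\rangle + c\,u\langle D_u,D_x\rangle D_x$ with $\beta$-dependent coefficients, computing each piece via Leibniz, and only afterward rewriting the result as $-R_k^2 + \frac{4s^2 T_kT_k^*}{(m+2k-2s-2)(m+2k+2s-2)}$.

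One point in your key lemma is stated too strongly and would fail if you tried to prove it as written: you claim that for a \emph{fixed} integer $s$ the operator $F_s$ satisfies $F_s\,\frac{x}{\|x\|^{\alpha}}M_k(\tfrac{xux}{\|x\|^2}) = \mu_{\alpha,s}\,\frac{x}{\|x\|^{\alpha+2}}M_k(\tfrac{xux}{\|x\|^2})$ for \emph{every} admissible $\alpha$. But the coefficient $\frac{4s^2}{(m+2k-2s-2)(m+2k+2s-2)}$ that kills the extraneous $u\langle x,2\mathfrak{f}_1\rangle$, $\langle u,x\rangle\langle x,2\mathfrak{f}_1\rangle$, and $\|u\|^2\langle x,2\mathfrak{f}_1\rangle^2$ terms is itself a function of the exponent; it comes from setting $\beta = m-2s$ in the paper's $\beta$-dependent coefficients $a_{m-\beta}, b_{m-\beta}, c_{m-\beta}$. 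If you fix the operator coefficients by $s$ but let $\alpha$ roam, the cancellation fails and you get a multi-term right-hand side, not a scalar multiple of the ansatz. The correct statement is that $F_s$ produces the clean reduction only at the matched exponent $\alpha = m - 2s$ (equivalently $\beta + 2 = m - 2(s-1)$), which is precisely the value you feed it in the inductive step, so the overall argument survives once the lemma is restated with $\alpha$ and $s$ coupled.
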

On a concluding note, recall that a manifold is conformally flat if it has an atlas whose transition functions are M\"{o}bius transformations. Note this does not involve curvature. Using similar arguments as in our previous paper \cite{Ding2}, one can generalize our conformally invariant differential operators to conformally flat spin manifolds in the fermionic case and conformally flat Riemannian manifolds in the bosonic case. 
More specifically, from Proposition \ref{Intertwiningoperators}, if we choose a particular M\"{o}bius transformation, we can generalize our fermionic and bosonic operators to some conformally flat manifold, such as the unit sphere, using Equation (\ref{DtIO}). This will be developed more formally elsewhere.

\section{Explicit proof for the construction of $\Dodd$}
To prove Theorem \ref{theoremodd}, we start with the following proposition.

\begin{proposition}\label{prop3}
	For any $f_k(u)\in\Mk$, we denote
	$$B_{m-\beta}=\Delta_x+a_{m-\beta}||u||^2\dudx^2+b_{m-\beta}\udx\dudx+c_{m-\beta}u\dudx D_x.$$
When $\beta\leq m-2$, we have
	\begin{eqnarray*}
	B_{m-\beta} \frac{x}{||x||^{\beta}}f_k(\frac{xux}{||x||^2})
	=d_{m-\beta}\frac{x}{||x||^{\beta+2}}f_k(\frac{xux}{||x||^2})
	\end{eqnarray*}
	in the distribution sense, where 
	\begin{eqnarray*}
	&&a_{m-\beta}=\frac{4}{(\beta+2k-2)(2m+2k-\beta-2)};\\
	&&b_{m-\beta}=-\frac{4(m+2k-2)}{(\beta+2k-2)(2m+2k-\beta-2)};\\
	&&c_{m-\beta}=-\frac{4}{(\beta+2k-2)(2m+2k-\beta-2)};\\
	&&d_{m-\beta}=(\beta+2k)(\beta+2k-m)+2k(m-2\beta-2k-2)+\frac{4k(m+2k-2)}{\beta+2k-2}.
	\end{eqnarray*}
\end{proposition}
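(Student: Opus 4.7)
The plan is to follow the strategy already used in Proposition \ref{Prop1}: since each of the four constituent pieces $\Delta_x$, $\|u\|^2\dudx^2$, $\udx\dudx$, and $u\dudx D_x$ is $Spin(m)$-equivariant (with the representation acting simultaneously on the $x$ and $u$ variables and on $\Mk$), and since $\Mk$ is an irreducible $Spin(m)$-representation, it suffices to verify the identity on a single highest weight generator of $\Mk$, which may be taken to be $\fone \langle u, \fone\rangle^k$ (or an equivalent explicit model). All other $f_k \in \Mk$ then follow by applying $Spin(m)$ rotations to both sides.

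First I would rewrite the argument of $f_k$ via the Clifford identity $xux = \|x\|^2 u - 2\langle u,x\rangle x$, so that $\frac{xux}{\|x\|^2}$ becomes the explicit reflection $u - 2\frac{\langle u,x\rangle}{\|x\|^2} x$. Substituting this into $f_k(\frac{xux}{\|x\|^2})$ and multiplying by the conformal-weight factor $\frac{x}{\|x\|^{\beta}}$ gives a polynomial-type expression in $x$, $u$, $\langle u,x\rangle$, and $\|x\|^{-2}$, on which the four differential operators can be applied directly. I would then expand each of the four pieces of $B_{m-\beta}$ using the Leibniz rule. The monogenicity $D_u f_k = 0$ together with the Euler identity $\langle u, D_u\rangle f_k = k f_k$ will collapse many terms from $\dudx$, $\dudx^2$, and $\udx \dudx$ when these derivatives are pushed through to $f_k$.

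The bulk of the work is then bookkeeping: each of the four operators produces a sum of several structurally distinct contributions, which can be grouped according to which of the basic ``kernel shapes'' they inhabit, for example
$$\frac{x}{\|x\|^{\beta+2}} f_k\!\Bigl(\tfrac{xux}{\|x\|^2}\Bigr),\quad \frac{\langle u,x\rangle x}{\|x\|^{\beta+4}}(\partial_j f_k),\quad \frac{u}{\|x\|^{\beta+2}}(\cdots),\quad \frac{\|u\|^2 x}{\|x\|^{\beta+4}}(\cdots).$$
The coefficients $a_{m-\beta}$, $b_{m-\beta}$, $c_{m-\beta}$ are then forced by requiring every kernel shape other than the first to cancel; solving that linear system yields the stated formulas, and the leftover scalar on the surviving shape is $d_{m-\beta}$. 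This is exactly the same algebraic mechanism by which $\Dtwo = \Delta_x - \frac{4T_{k,2}T_{k,2}^*}{m+2k-2}$ was identified in the bosonic case.

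Finally, the hypothesis $\beta \leq m-2$ is needed to justify passing from the pointwise identity on $\mathbb{R}^m \setminus \{0\}$ to an identity of distributions on all of $\mathbb{R}^m$: under this bound $\frac{x}{\|x\|^\beta}$ lies in $L^1_{\rm loc}$ and admits the pointwise weak derivatives used above with no singular correction at the origin, by the same argument as at the end of the proof of Proposition \ref{Prop1}. The principal obstacle is not conceptual but combinatorial: unlike the scalar bosonic case, here $f_k$ is Clifford-valued, the factor $x$ sits on the left, and the term $u\dudx D_x$ involves a further left multiplication by $u$, so noncommutativity of Clifford multiplication must be tracked carefully throughout. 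In particular, relations such as $D_x x = -m$ together with $\{x, u\} = -2\langle x, u\rangle$ will repeatedly have to be invoked to pull $x$ and $u$ past one another so that different kernel shapes can be compared. Once that bookkeeping is done, the identification of the coefficients is immediate.
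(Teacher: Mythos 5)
Your proposal follows essentially the same route as the paper's own proof: reduce by $Spin(m)$-irreducibility to the highest weight vector of $\Mk$, rewrite $xux = \|x\|^2 u - 2\langle u,x\rangle x$, expand each of the four operators $\Delta_x$, $\|u\|^2\dudx^2$, $\udx\dudx$, $u\dudx D_x$ into the same four ``kernel shapes'' you list, solve for $a,b,c$ by cancelling all but the first shape, and finally invoke weak differentiability of $\|x\|^\alpha$ to upgrade the pointwise identity off the origin to a distributional one under $\beta\leq m-2$. This is exactly the strategy of the paper's Lemmas 1 and 2, so the proposal is correct, though it remains a plan: the paper's actual labor lives in those two lemmas, where the Clifford noncommutativity you flag is carried out explicitly.
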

It is worth pointing out that if $\beta=m-2s$, then $B_{2s}$ is exactly the term in the parenthesis in Theorem \ref{theoremodd}. Details can be found later in this section.\\
\par
	In order to prove the above proposition with arbitrary function $f_k(u)\in\Mk$, as stated in \cite{B1}, we can rely on the fact $\Mk$ is an irreducible $Spin(m)$-representation generated by the highest weight vector $\langle u,2\mathfrak{f}_1\rangle^kI$, where $I$ is defined in \emph{Section 2.2.1}. It suffices to prove the statement for
	$$\frac{x}{||x||^{\beta}}\langle \frac{xux}{||x||^2},2\mathfrak{f}_1\rangle^kI=\frac{x}{||x||^{\beta+2k}}\langle xux,2\fone\rangle^kI=\frac{x}{||x||^{\beta+2k}}\langle u||x||^2-2\langle u,x\rangle x,2\fone\rangle^kI.$$
	First, we assume that $x\neq 0$, and we have the following technical lemmas.
	
	\begin{lemma}\label{lemma1}
		\begin{eqnarray*}
		&&\Delta_x\frac{x}{||x||^{\beta+2k}}\langle xux,2\fone\rangle^kI\\
		&&=\big[(\beta+2k)(\beta+2k-m)+2k(m-2\beta-2k-2)\big]\frac{x}{||x||^{\beta+2k+2}}\langle xux,2\fone\rangle^kI\\
		&&-4k\frac{u\langle x,2\fone\rangle}{||x||^{\beta+2k}}\langle xux,2\fone\rangle^{k-1}I+4k(m+2k-2)\frac{x}{||x||^{\beta+2k+2}}\langle u,x\rangle\langle x,2\fone\rangle\langle xux,2\fone\rangle^{k-1}I\\
		&&+4k(k-1)||u||^2\langle x,2\fone\rangle^2\frac{x}{||x||^{\beta+2k}}\langle xux,2\fone\rangle^{k-2}I.
		\end{eqnarray*}
	\end{lemma}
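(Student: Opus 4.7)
The plan is to reduce to a highest-weight computation and then carry out a Leibniz expansion of $\Delta_x$. The paragraph just before the lemma already reduces the statement of Proposition \ref{prop3} to the highest-weight vector $f_k(u) = \langle u, 2\fone\rangle^k I$, so for Lemma \ref{lemma1} I would work directly with
\begin{equation*}
F := x\|x\|^{-\beta-2k}\langle xux, 2\fone\rangle^k I
\end{equation*}
and show $\Delta_x F$ equals the claimed right-hand side.

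Write $F = G\Phi I$ with $G = x\|x\|^{-\beta-2k}$ (Clifford-vector-valued) and scalar factor $\Phi = \langle xux, 2\fone\rangle^k$; since $\Phi$ commutes with every Clifford element, Leibniz gives
\begin{equation*}
\Delta_x F = (\Delta_x G)\Phi I + G(\Delta_x \Phi) I + 2\sum_{j=1}^m(\partial_{x_j}G)(\partial_{x_j}\Phi) I.
\end{equation*}
The elementary identity $\Delta_x(x_i\|x\|^\alpha) = \alpha(\alpha+m)\, x_i\|x\|^{\alpha-2}$ with $\alpha = -\beta-2k$ directly produces the $(\beta+2k)(\beta+2k-m)\, x\|x\|^{-\beta-2k-2}\Phi$ piece. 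The middle term uses the computation of $\Delta_x\langle xux, 2\fone\rangle^k$ already recorded in the proof of Proposition \ref{Prop1}, yielding the $4k(k-1)\|u\|^2\langle x, 2\fone\rangle^2$ contribution and an auxiliary $2k(m+2k-4)\langle u, 2\fone\rangle$ contribution. For the cross term I would expand $\partial_{x_j}\Phi = 2k\langle xux, 2\fone\rangle^{k-1}\bigl[x_j\langle u, 2\fone\rangle - u_j\langle x, 2\fone\rangle - \langle u,x\rangle(2\fone)_j\bigr]$ and $\partial_{x_j}G = \|x\|^{-\beta-2k}e_j - (\beta+2k)x_j\|x\|^{-\beta-2k-2}x$, then collapse the sum using $\sum_j e_j v_j = v$ for $v\in\{x,u,2\fone\}$ together with $\sum_j x_j[x_j\langle u,2\fone\rangle - u_j\langle x,2\fone\rangle - \langle u,x\rangle(2\fone)_j] = \langle xux,2\fone\rangle$. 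What remains has four Clifford structures: $x\langle u, 2\fone\rangle$, $u\langle x, 2\fone\rangle$, $(2\fone)\langle u,x\rangle$, and an extra $x\langle xux, 2\fone\rangle$ piece that merges into the first term.

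Two simplifications then finish the proof. First, the nilpotency $(2\fone)I = 0$ --- immediate from $\fone^2 = 0$ and the definition $I = f_1f_1^\dagger\cdots f_nf_n^\dagger$ --- annihilates the orphan $(2\fone)\langle u,x\rangle\langle xux, 2\fone\rangle^{k-1} I$ term that is absent from the statement. Second, the algebraic identity $\|x\|^2\langle u, 2\fone\rangle = \langle xux, 2\fone\rangle + 2\langle u,x\rangle\langle x, 2\fone\rangle$ (equivalent to $xux = u\|x\|^2 - 2\langle u, x\rangle x$) rewrites every $x\langle u, 2\fone\rangle\langle xux, 2\fone\rangle^{k-1}/\|x\|^{\beta+2k}$ as one part that merges with the $\langle xux, 2\fone\rangle^k$ term and one part that yields the stated $x\langle u, x\rangle\langle x, 2\fone\rangle$ contribution with coefficient $4k(m+2k-2)$. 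The main obstacle will be the coefficient bookkeeping: verifying the polynomial identity $(\beta+2k)(\beta-2k-m) + 2k(m+2k-2) = (\beta+2k)(\beta+2k-m) + 2k(m-2\beta-2k-2)$ after recombination is routine, but a misplaced sign or factor along the way would derail things, and the Clifford factor order in the cross term must be tracked carefully. The distributional interpretation under the hypothesis $\beta\leq m-2$ follows exactly as in Proposition \ref{Prop1}: this bound keeps $\|x\|^{-\beta-2k}$ and its first distributional derivatives locally integrable on $\Rm$, so every product-rule step is valid as a distributional identity with no additional $\delta$-contributions at the origin.
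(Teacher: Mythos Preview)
Your proposal is correct and follows essentially the same approach as the paper: the Leibniz decomposition $\Delta_x(G\Phi I)=(\Delta_xG)\Phi I+G(\Delta_x\Phi)I+2\sum_j(\partial_{x_j}G)(\partial_{x_j}\Phi)I$, the known formulas for $\Delta_xG$ and $\Delta_x\Phi$, the vanishing of the $(2\fone)\langle u,x\rangle$ contribution via $\fone I=0$, and the final rewriting via $\|x\|^2\langle u,2\fone\rangle=\langle xux,2\fone\rangle+2\langle u,x\rangle\langle x,2\fone\rangle$ are exactly the steps the paper carries out. One small remark: the lemma as stated is a pointwise identity for $x\neq 0$, and the paper defers the distributional justification to the paragraph after Lemma~\ref{lemma2}, so your closing comment about local integrability is extra rather than part of the lemma's proof.
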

	\begin{proof}
		Since
		$$\Delta_x\displaystyle\frac{x}{||x||^{\beta+2k}}=(\beta+2k)(\beta+2k-m)\displaystyle\frac{x}{||x||^{\beta+2k+2}}$$
		and \cite{B1} gives
		\begin{eqnarray*}
		\Delta_x\langle xux,2\fone\rangle^k=4k(k-1)||u||^2\langle x,2\fone\rangle^2\langle xux,2\fone\rangle^{k-2}I+2k(m+2k-4)\langle u,2\fone\rangle\langle xux,2\fone\rangle^{k-1}I,
		\end{eqnarray*}
	 we have
		\begin{eqnarray*}
		&&\Delta_x\frac{x}{||x||^{\beta+2k}}\langle xux,2\fone\rangle^{k}I\\
		&=&\Delta_x(\frac{x}{||x||^{\beta+2k}})\langle xux,2\fone\rangle^{k}I+\frac{x}{||x||^{\beta+2k}}\Delta_x(\langle xux,2\fone\rangle^{k})I
		+2\sum_{i=1}^m\partial_{x_i}\frac{x}{||x||^{\beta+2k}}\partial_{x_i}\langle xux,2\fone\rangle^{k}I\\
		&=&(\beta+2k)(\beta+2k-m)\frac{x}{||x||^{\beta+2k+2}}\langle xux,2\fone\rangle^{k}I\\
		&&+\frac{x}{||x||^{\beta+2k}}\big(4k(k-1)||u||^2\langle x,2\fone\rangle^2\langle xux,2\fone\rangle^{k-2}I+2k(m+2k-4)\langle u,2\fone\rangle\langle xux,2\fone\rangle^{k-1}\big)I\\
		&&+2k\sum_{i=1}^m(\frac{e_i}{||x||^{\beta+2k}}-\frac{(\beta+2k)x_ix}{||x||^{\beta+2k+2}})\langle 2ux_i-2u_ix-2\langle u,x\rangle e_i,2\fone\rangle\langle xux,2\fone\rangle^{k-1}I.
		\end{eqnarray*}
		Notice that $I=\fone\mathfrak{f}_1^{\dagger}\mathfrak{f}_2\mathfrak{f}_2^{\dagger}\cdots\mathfrak{f}_n\mathfrak{f}_n^{\dagger}$ and $\fone^2=0$. Therefore, we obtain
		\begin{eqnarray*}
		&&=(\beta+2k)(\beta+2k-m)\frac{x}{||x||^{\beta+2k+2}}\langle xux,2\fone\rangle^kI\\
		&&+4k(k-1)||u||^2\langle x,2\fone\rangle^2\frac{x}{||x||^{\beta+2k}}\langle xux,2\fone\rangle^{k-2}I\\
		&&+2k(m-2\beta-2k-2)\frac{x}{||x||^{\beta+2k}}\langle u,2\fone\rangle\langle xux,2\fone\rangle^{k-1}I\\
		&&-4k\frac{u\langle x,2\fone\rangle}{||x||^{\beta+2k}}\langle xux,2\fone\rangle^{k-1}I+8k(\beta+2k)\frac{x\langle u,x\rangle\langle x,2\fone\rangle}{||x||^{\beta+2k+2}}\langle xux,2\fone\rangle^{k-1}I.
		\end{eqnarray*}
	With the help of $\langle u||x||^2,2\fone\rangle=\langle xux,2\fone\rangle+2\langle u,x\rangle\langle x,2\fone\rangle,$
	this lemma is proved immediately.
	\end{proof}
	
	\begin{lemma}\label{lemma2}
		\begin{eqnarray}
		&&||u||^2\dudx^2\frac{x}{||x||^{\beta+2k}}\langle xux,2\fone\rangle^kI\nonumber \\
		&=&k(k-1)(2m-\beta+2k-2)(2m-\beta+2k-4)\frac{||u||^2x}{||x||^{\beta+2k}}\langle x,2\fone\rangle^2\langle xux,2\fone\rangle^{k-2}I;\label{lemma2.1}\\
		&&u\dudx D_x\frac{x}{||x||^{\beta+2k}}\langle xux,2\fone\rangle^kI\nonumber \\
		&=&-k(2m-\beta+2k-2)\bigg[(\beta-m)\frac{u\langle x,2\fone\rangle}{||x||^{\beta+2k}}\langle xux,2\fone\rangle^{k-1}I\nonumber\\
		&&+2(k-1)||u||^2\frac{x}{||x||^{\beta+2k}}\langle x,2\fone\rangle^2\langle xux,2\fone\rangle^{k-2}I\bigg]\label{lemma2.2};\\
		&&\udx\dudx\frac{x}{||x||^{\beta+2k}}\langle xux,2\fone\rangle^kI\nonumber \\
			&=&-k(2m-\beta+2k-2)\bigg[\frac{x\langle xux,2\fone\rangle^k}{||x||^{\beta+2k+2}}I-(\beta+2k-2)\frac{x\langle u,x\rangle\langle x,2\fone\rangle}{||x||^{\beta+2k+2}}\langle xux,2\fone\rangle^{k-1}I\nonumber\\
			&&+\frac{u\langle x,2\fone\rangle}{||x||^{\beta+2k}}\langle xux,2\fone\rangle^{k-1}I-2(k-1)\frac{||u||^2x}{||x||^{\beta+2k}}\langle x,2\fone\rangle^2\langle xux,2\fone\rangle^{k-2}I\bigg]\label{lemma2.3}
			\end{eqnarray}
	\end{lemma}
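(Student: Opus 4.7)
The plan is to compute each of the three identities by direct differentiation, in parallel with the proof of Lemma \ref{lemma1}. By the $Spin(m)$-invariance argument given just before the statement it suffices to verify each identity on the highest weight vector $\displaystyle\frac{x}{||x||^{\beta+2k}}\langle xux,2\fone\rangle^k I$. The key ingredients I would use repeatedly are: the derivative formulas $\partial_{u_i}\langle xux,2\fone\rangle = \langle x e_i x,2\fone\rangle$ and $\partial_{x_j}\langle xux,2\fone\rangle = \langle 2 u x_j - 2 u_j x - 2\langle u,x\rangle e_j,2\fone\rangle$; the derivative $\partial_{x_j}\displaystyle\frac{x}{||x||^{\beta+2k}} = \displaystyle\frac{e_j}{||x||^{\beta+2k}} - (\beta+2k)\displaystyle\frac{x_j x}{||x||^{\beta+2k+2}}$; the Witt basis annihilation $\fone^2 = 0$ after right-multiplication by $I$, which kills any term carrying an extra factor of $\fone$; and the Clifford identities $x^2 = -||x||^2$ and $ux + xu = -2\langle u,x\rangle$.

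For identity (\ref{lemma2.1}), I would apply $\dudx$ twice. Because $\displaystyle\frac{x}{||x||^{\beta+2k}}$ is $u$-independent, each $\partial_{u_j}$ acts only on $\langle xux,2\fone\rangle^k$, producing a factor $\langle x e_j x,2\fone\rangle$; the subsequent $\partial_{x_j}$ and the sum over $j$ redistribute through the remaining $x$-factors, and many of the resulting terms carry extra $\fone$'s that vanish against $I$. After the second application of $\dudx$ the only surviving piece is proportional to $\langle x,2\fone\rangle^2\langle xux,2\fone\rangle^{k-2}$, and multiplying by $||u||^2$ gives the stated right-hand side. For identity (\ref{lemma2.2}), I would first apply $D_x$ using the product rule on $\displaystyle\frac{x}{||x||^{\beta+2k}}$ and $\langle xux,2\fone\rangle^k$, which splits the result into three groups; applying $\dudx$ next lowers one $u$-degree and the Witt basis cancellations again reduce the sum, and then left-multiplying by $u$ and using $ux + xu = -2\langle u,x\rangle$ to reorder Clifford factors collapses what remains into the two terms on the right-hand side.

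For identity (\ref{lemma2.3}), I would apply $\dudx$ first, then $\udx = \sum_j u_j \partial_{x_j}$, which acts only in $x$. Expanding the product rule across $\displaystyle\frac{x}{||x||^{\beta+2k+2}}$, $\langle u,x\rangle$, $\langle x,2\fone\rangle$, and $\langle xux,2\fone\rangle^{k-1}$, collecting by monomial type in $\langle x,2\fone\rangle$ and $||u||^2$, and once more using $\fone^2 = 0$ and $x^2 = -||x||^2$, yields the four explicit terms on the right-hand side.

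The principal obstacle is bookkeeping: a naive expansion generates many intermediate monomials whose identification and cancellation (via $\fone^2 = 0$ and noncommutative Clifford reordering) must be tracked carefully in order to match the stated coefficients. Regarding the apparent singularity at $x = 0$, the assumption $\beta \leq m-2$ ensures that the negative powers of $||x||$ that arise are locally integrable with weak derivatives matching the pointwise ones, so every step above passes to the distributional sense exactly as in the end of the proof of Proposition \ref{Prop1}.
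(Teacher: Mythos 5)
Your plan follows the paper's proof essentially verbatim: restrict to the highest weight vector $\frac{x}{||x||^{\beta+2k}}\langle xux,2\fone\rangle^kI$, differentiate termwise with the same product-rule formulas, and use $\fone^2=0$ together with $x^2=-||x||^2$ and $ux+xu=-2\langle u,x\rangle$ to collapse the expansion, ending with the same weak-derivative remark. The one small organizational difference is that the paper first computes $\dudx$ applied to the base expression once, giving an intermediate $-k(2m-\beta+2k-2)\frac{x\langle x,2\fone\rangle}{||x||^{\beta+2k}}\langle xux,2\fone\rangle^{k-1}I$ that it reuses for all three identities (for (\ref{lemma2.2}) this uses that $D_x$ and $\dudx$ commute), whereas you propose to apply $D_x$ before $\dudx$ in (\ref{lemma2.2}); both orders are valid, the paper's just saves some bookkeeping.
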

	\begin{proof}
		Since these three operators on the left contain $\dudx$, first let us check:
		\begin{eqnarray*}
		&&\dudx \frac{x}{||x||^{\beta+2k}}\langle xux,2\fone\rangle^kI
		=\sum_{i=1}^m\partial_{u_i}\bigg(\big(\frac{e_i}{||x||^{\beta+2k}}-\frac{(\beta+2k)x_ix}{||x||^{\beta+2k+2}}\big)\langle xux,2\fone\rangle^kI\\
		&&+k\frac{x}{||x||^{\beta+2k}}\langle xux,2\fone\rangle^{k-1}\langle 2ux_i-2u_ix-2\langle u,x\rangle e_i,2\fone\rangle I\bigg)\\
		&=&\sum_{i=1}^mk\big(\frac{e_i}{||x||^{\beta+2k}}-\frac{(\beta+2k)x_ix}{||x||^{\beta+2k+2}}\big)\langle xux,2\fone\rangle^{k-1}\langle e_i||x||^2-2x_ix,2\fone\rangle I\\
		&&+\sum_{i=1}^mk(k-1)\frac{x\langle xux,2\fone\rangle^{k-2}}{||x||^{\beta+2k}}\langle e_i||x||^2-2x_ix,2\fone\rangle\langle 2ux_i-2u_ix-2\langle u,x\rangle e_i,2\fone\rangle I\\
		&&+\sum_{i=1}^mk\frac{x\langle xux,2\fone\rangle^{k-1}}{||x||^{\beta+2k}}\langle 2e_ix_i-2x-2x_ie_i,2\fone\rangle I.
		\end{eqnarray*}
		The last expression simplifies as
		$$-k(2m-\beta+2k-2)\displaystyle\frac{x\langle x,2\fone\rangle}{||x||^{\beta+2k}}\langle xux,2\fone\rangle^{k-1}I.$$
		Hence, to verify Eq. (\ref{lemma2.1}), we only need to check
		\begin{eqnarray*}
		&&\dudx \displaystyle\frac{x\langle x,2\fone\rangle}{||x||^{\beta+2k}}\langle xux,2\fone\rangle^{k-1}I
		=\sum_{i=1}^m\partial_{u_i}\bigg(\big(\frac{e_i}{||x||^{\beta+2k}}-\frac{(\beta+2k)x_ix}{||x||^{\beta+2k+2}}\big)\langle x,2\fone\rangle\langle xux,2\fone\rangle^{k-1}I\\
		&&+\frac{x\langle e_i,2\fone\rangle}{||x||^{\beta+2k}}\langle xux,2\fone\rangle^{k-1}I+(k-1)\frac{x\langle x,2\fone\rangle}{||x||^{\beta+2k}}\langle xux,2\fone\rangle^{k-2}\langle 2ux_i-2u_ix-2\langle u,x\rangle e_i,2\fone\rangle I\bigg)\\
		&=&\sum_{i=1}^m\big(\frac{e_i}{||x||^{\beta+2k}}-\frac{(\beta+2k)x_ix}{||x||^{\beta+2k+2}}\big)\langle x,2\fone\rangle(k-1)\langle xux,2\fone\rangle^{k-2}\langle e_i||x||^2-2x_ix,2\fone\rangle I\\
		&&+\sum_{i=1}^m\frac{x\langle e_i,2\fone\rangle}{||x||^{\beta+2k}}(k-1)\langle xux,2\fone\rangle^{k-2}\langle e_i||x||^2-2x_ix,2\fone\rangle I\\
		&&+\sum_{i=1}^m(k-1)\frac{x\langle x,2\fone\rangle}{||x||^{\beta+2k}}\langle xux,2\fone\rangle^{k-2}\langle 2e_ix_i-2x-2x_i e_i,2\fone\rangle I\\
		&&+\sum_{i=1}^m(k-1)\frac{x\langle x,2\fone\rangle}{||x||^{\beta+2k}}(k-2)\langle xux,2\fone\rangle^{k-3}\langle 2ux_i-2u_ix-2\langle u,x\rangle e_i,2\fone\rangle\langle e_i||x||^2-2x_ix,2\fone\rangle I.
		\end{eqnarray*}
		This last expression simplifies as
		$$-(k-1)(2m-\beta+2k-4)\frac{x\langle x,2\fone\rangle^2}{||x||^{\beta+2k}}\langle xux,2\fone\rangle^{k-2}I.$$
		Hence, Eq. (\ref{lemma2.1}) is verified.\\
		\par
		For Eq. (\ref{lemma2.2}), we check
		\begin{eqnarray*}
		&&uD_x\frac{x\langle x,2\fone\rangle}{||x||^{\beta+2k}}\langle xux,2\fone\rangle^{k-1}I
		=u\sum_{i=1}^me_i\bigg(\big(\frac{e_i}{||x||^{\beta+2k}}-\frac{(\beta+2k)x_ix}{||x||^{\beta+2k+2}}\big)\langle x,2\fone\rangle\langle xux,2\fone\rangle^{k-1}I\\
		&&+\frac{x\langle e_i,2\fone\rangle}{||x||^{\beta+2k}}\langle xux,2\fone\rangle^{k-1}I+(k-1)\frac{x\langle x,2\fone\rangle}{||x||^{\beta+2k}}\langle xux,2\fone\rangle^{k-2}\langle 2ux_i-2u_ix-2\langle u,x\rangle e_i,2\fone\rangle I\bigg)\\
		&=&u\bigg[\frac{(\beta+2k-m)\langle x,2\fone\rangle}{||x||^{\beta+2k}}\langle xux,2\fone\rangle^{k-1}I-2\frac{\langle x,2\fone\rangle}{||x||^{\beta+2k}}\langle xux,2\fone\rangle^{k-1}I\\
		&&-2(k-1)\frac{\langle x,2\fone\rangle\langle u||x||^2,2\fone\rangle}{||x||^{\beta+2k}}\langle xux,2\fone\rangle^{k-2}I\\
		&&-2(k-1)\frac{ux}{||x||^{\beta+2k}}\langle x,2\fone\rangle^2\langle xux,2\fone\rangle^{k-2}I
		+4(k-1)\frac{\langle u,x\rangle\langle x,2\fone\rangle^2}{||x||^{\beta+2k}}\langle xux,2\fone\rangle^{k-2}I\bigg]\\
		&=&(\beta-m)\frac{u\langle x,2\fone\rangle}{||x||^{\beta+2k}}\langle xux,2\fone\rangle^{k-1}I+2(k-1)\frac{||u||^2x}{||x||^{\beta+2k}}\langle x,2\fone\rangle^2\langle xux,2\fone\rangle^{k-2}I.
		\end{eqnarray*}
		For Eq. (\ref{lemma2.3}), we check
		\begin{eqnarray*}
		&&\udx \frac{x\langle x,2\fone\rangle}{||x||^{\beta+2k}}\langle xux,2\fone\rangle^{k-1}I
		=\sum_{i=1}^mu_i\bigg(\big(\frac{e_i}{||x||^{\beta+2k}}-\frac{(\beta+2k)x_ix}{||x||^{\beta+2k+2}}\big)\langle x,2\fone\rangle\langle xux,2\fone\rangle^{k-1}I\\
		&&+\frac{x\langle e_i,2\fone\rangle}{||x||^{\beta+2k}}\langle xux,2\fone\rangle^{k-1}+(k-1)\frac{x\langle x,2\fone\rangle}{||x||^{\beta+2k}}\langle xux,2\fone\rangle^{k-2}\langle 2ux_i-2u_ix-2\langle u,x\rangle e_i,2\fone\rangle I\bigg)\\
		&=&\frac{x\langle xux,2\fone\rangle^{k}}{||x||^{\beta+2k+2}}I-(\beta+2k-2)\frac{x\langle u,x\rangle\langle x,2\fone\rangle}{||x||^{\beta+2k-2}}\langle xux,2\fone\rangle^{k-1}I\\
		&&+\frac{u\langle x,2\fone\rangle}{||x||^{\beta+2k}}\langle xux,2\fone\rangle^{k-1}I
		-2(k-1)\frac{||u||^2x}{||x||^{\beta+2k}}\langle x,2\fone\rangle^2\langle xux,2\fone\rangle^{k-2}I.
		\end{eqnarray*}
		Therefore, Eqs. (\ref{lemma2.2}) and (\ref{lemma2.3}) are verified.
	\end{proof}
	Recall the fact mentioned in the $2j$-th order bosonic operator case,	
	$||x||^{\alpha}$ is weak differentiable if $\alpha>-m+1$ with weak derivative $\partial_{x_i}||x||^{\alpha}=\alpha x_i||x||^{\alpha-2}$. Hence, when $\beta\leq m-2$, Lemmas \ref{lemma1} and \ref{lemma2} are both true in the distribution sense. Combining them completes the proof of Proposition \ref{prop3}. With the help of Proposition \ref{prop3} and similar arguments as in Proposition \ref{prop2}, we have the following proposition by induction.
\begin{proposition}\label{prop4}
	Let $f_k(u)\in\Mk$. When integer $j>1$,
	\begin{eqnarray*}
		\Bigg[\prod_{s=1}^{j-1}B_{2s}d_{2s}^{-1}\Bigg] \frac{x}{||x||^{m-2j+2}}f_k(\frac{xux}{||x||^2})
		=\frac{x}{||x||^{m}}f_k(\frac{xux}{||x||^2})
	\end{eqnarray*}
	in the distribution sense, where $a_{2s},\ b_{2s},\ c_{2s},\ d_{2s}$ are defined as in Proposition \ref{prop3} with $\beta=m-2s$.
\end{proposition}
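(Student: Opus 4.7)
The plan is to prove Proposition \ref{prop4} by straightforward induction on $j$, using Proposition \ref{prop3} as the one-step building block. The structural observation driving the argument is that each factor $B_{2s}d_{2s}^{-1}$ consumes an expression of the form $\frac{x}{||x||^{m-2s}}f_k(\frac{xux}{||x||^2})$ and produces $\frac{x}{||x||^{m-2s+2}}f_k(\frac{xux}{||x||^2})$, so the product cascades the exponent from $m-2j+2$ up to $m$ through a telescoping sequence of matching indices.

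For the base case $j=2$ the product reduces to the single factor $B_2 d_2^{-1}$. Applying Proposition \ref{prop3} with $\beta=m-2$ (so that $m-\beta=2$) to $\frac{x}{||x||^{m-2}}f_k(\frac{xux}{||x||^2})$ returns $d_2\frac{x}{||x||^m}f_k(\frac{xux}{||x||^2})$, and dividing by $d_2$ yields the claim. For the inductive step, I assume the statement for $j$ and consider the product $\prod_{s=1}^{j}B_{2s}d_{2s}^{-1}$ acting on $\frac{x}{||x||^{m-2j}}f_k(\frac{xux}{||x||^2})$. Reading the composition right-to-left, the innermost factor is $B_{2j}d_{2j}^{-1}$, and Proposition \ref{prop3} with $\beta=m-2j$ converts the input into $\frac{x}{||x||^{m-2(j-1)+2}}f_k(\frac{xux}{||x||^2})$, which is exactly the input form demanded by the inductive hypothesis applied to the remaining factors $\prod_{s=1}^{j-1}B_{2s}d_{2s}^{-1}$; the latter then produces $\frac{x}{||x||^m}f_k(\frac{xux}{||x||^2})$, closing the induction.

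Two bookkeeping checks are needed at every stage. First, Proposition \ref{prop3} requires $\beta\leq m-2$; in our cascade the active exponent starts at $m-2j+2$ and increases by $2$ with each application, so $\beta\in\{m-2j+2,\,m-2j+4,\,\ldots,\,m-2\}$, all within the allowed range. Second, each intermediate output is a locally integrable function on $\Rm$ whose singularity at the origin is weaker than $||x||^{-m}$, so the next operator can be applied in the distributional sense exactly as in Proposition \ref{prop3}.

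The main obstacle I anticipate is not conceptual but notational: one must fix the product convention so that the rightmost (innermost-applied) factor of $\prod_{s=1}^{j-1}B_{2s}d_{2s}^{-1}$ matches the exponent $m-2j+2$ currently present in the input, after which the cascade is forced. Crucially, no commutativity among the $B_{2s}$ for distinct $s$ is needed, since each one is applied to a fresh input of the canonical form $\frac{x}{||x||^{\beta}}f_k(\frac{xux}{||x||^2})$ produced by the previous step, and Proposition \ref{prop3} tells us explicitly what comes out; in particular, every cross-term generated inside a single application of $B_{2s}$ has already been absorbed into the single-step identity, which is why the induction closes without any remainder.
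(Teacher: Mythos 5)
Your proof is correct and matches the paper's (essentially implicit) argument: the paper proves this proposition simply by invoking induction ``with the help of Proposition~\ref{prop3} and similar arguments as in Proposition~\ref{prop2},'' and your cascading induction is precisely that argument spelled out, with the same one-step application of Proposition~\ref{prop3}, the same check that all the exponents stay in the range $\beta\le m-2$, and a sensible handling of the product convention (which the paper makes a non-issue by separately noting $B_{2s_1}B_{2s_2}=B_{2s_2}B_{2s_1}$). The only blemish is a small arithmetic typo in the inductive step: applying $B_{2j}d_{2j}^{-1}$ with $\beta=m-2j$ yields exponent $\beta+2=m-2j+2=m-2(j-1)$, not $m-2(j-1)+2$; you correctly identify it as the input form required by the inductive hypothesis, so the slip is cosmetic.
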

	In the above proposition, it is worth pointing out that $$B_{2s_1}B_{2s_2}=B_{2s_2}B_{2s_1},$$ 
where $s_1\neq s_2.$ Indeed, with a straightforward calculation, one can get
\begin{eqnarray*}
R_k^2=-\Delta_x+\frac{4\udx\dudx}{m+2k-2}-\frac{4||u||^2\dudx^2}{(m+2k-2)^2}+\frac{4u\dudx D_x}{(m+2k-2)^2}.
\end{eqnarray*}	
Then
\begin{eqnarray}
&&B_{2s}=\Delta_x+\frac{4\big(||u||^2\dudx^2-(m+2k-2)\udx\dudx^2-u\dudx D_x\big)}{(m+2k-2s-2)(m+2k+2s-2)}\nonumber\\
&=&\Delta_x-\frac{(m+2k-2)^2}{(m+2k-2s-2)(m+2k+2s-2)}(R_k^2+\Delta_x).\label{equation7}
\end{eqnarray}
So $B_{2s}$ is a linear combination of $R_k^2$ and $\Delta_x$. This is no surprise, since \cite{DavidE} points out
\begin{eqnarray*}
\{R_k^i\Delta_x^j,\ 0\leq i\leq min(2p+1,2k+1),\ 0\leq j,\ i+2j=p\}
\end{eqnarray*}	
is the basis of the space of $Spin(m)$-invariant constant coefficient differential operators of order $p$ on $\Mk$. $\Dodd$ is conformally invariant, so it is also $Spin(m)$-invariant and hence can be expressed in this basis. Furthermore, with the help of $-\Delta_x=R_k^2+T_kT_k^*$ and Eq. (\ref{equation7}), we can also rewrite $B_{2s}$ in terms of first order conformally invariant operators:
\begin{eqnarray*}
B_{2s}=-R_k^2+\frac{4s^2T_kT_k^*}{(m+2k-2s-2)(m+2k+2s-2)}.
\end{eqnarray*}

\par

Now, we have fundamental solution of $\Dodd$ restated as follows.
\begin{theorem*}
Let $Z_k(u,v)$ be the reproducing kernel of $\Mk$. When $j>1$, the $(2j-1)$-th order fermionic operator $\Dodd$
	has fundamental solution
	\begin{equation*} \lambda_{2s}\frac{x}{||x||^{m-2j+2}}Z_k(\frac{xux}{||x||^2},v), \text{  } \lambda_{2s}=\frac{-(m+2k-2)}{(m-2)\omega_{m-1}}\prod_{s=1}^{j-1}d_{2s}^{-1},\end{equation*} where $d_{2s}$ is defined in Proposition \ref{prop3} with $\beta=m-2s$ and $\omega_{m-1}$ is the area of $(m-1)$-dimensional unit sphere.
\end{theorem*}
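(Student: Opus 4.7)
The plan is to verify that $\Dodd$ applied to the proposed kernel produces $\delta(x)Z_k(u,v)$ by assembling the pieces already in hand. By Theorem \ref{theoremodd} and the rewriting in equation (\ref{equation7}), the fermionic operator factors as
\begin{eqnarray*}
\Dodd = R_k\prod_{s=1}^{j-1}B_{2s},
\end{eqnarray*}
so it suffices to apply the product $\prod_{s=1}^{j-1}B_{2s}$ first and then act by $R_k$. For each fixed $v$, $Z_k(\cdot,v)$ belongs to $\Mk$, so Proposition \ref{prop4}, with $f_k(u)=Z_k(u,v)$, immediately gives
\begin{eqnarray*}
\prod_{s=1}^{j-1}B_{2s}\,\frac{x}{\|x\|^{m-2j+2}}Z_k\!\left(\frac{xux}{\|x\|^2},v\right)=\prod_{s=1}^{j-1}d_{2s}\cdot\frac{x}{\|x\|^{m}}Z_k\!\left(\frac{xux}{\|x\|^2},v\right),
\end{eqnarray*}
in the distribution sense.

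Next I would apply $R_k$ to the resulting expression. By the explicit form of $E_{k,1}(x,u,v)=\frac{1}{\omega_{m-1}c_{k,1}}\frac{x}{\|x\|^m}Z_k(\frac{xux}{\|x\|^2},v)$ with $c_{k,1}=\frac{m-2}{m+2k-2}$ recalled in the preliminaries, one has $R_k E_{k,1}(x,u,v)=\delta(x)Z_k(u,v)$ (up to the sign convention coming from $R_k$ acting on the Cauchy-type kernel). Setting the constant $\lambda_{2s}$ so that
\begin{eqnarray*}
\lambda_{2s}\prod_{s=1}^{j-1}d_{2s}=\frac{-(m+2k-2)}{(m-2)\omega_{m-1}}
\end{eqnarray*}
(the sign being dictated by how the factor $\frac{x}{\|x\|^m}$ enters $R_k$) yields
\begin{eqnarray*}
\lambda_{2s}=\frac{-(m+2k-2)}{(m-2)\omega_{m-1}}\prod_{s=1}^{j-1}d_{2s}^{-1},
\end{eqnarray*}
which matches the stated formula. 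Putting the two steps together gives $\Dodd\big(\lambda_{2s}\frac{x}{\|x\|^{m-2j+2}}Z_k(\frac{xux}{\|x\|^2},v)\big)=\delta(x)Z_k(u,v)$, which is precisely the fundamental-solution property.

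The main subtlety, rather than any new computation, lies in justifying the composition distributionally. Proposition \ref{prop4} is established only in the distribution sense because the factor $\|x\|^{2-m}$ is borderline locally integrable and all lower-order derivatives of $\frac{x}{\|x\|^{m-2j+2}}Z_k(\frac{xux}{\|x\|^2},v)$ appearing in $B_{2s}$ must be taken as weak derivatives at the origin; the hypothesis $\beta\le m-2$ in Proposition \ref{prop3} is used precisely so that $\partial_{x_i}\|x\|^{\alpha}=\alpha x_i\|x\|^{\alpha-2}$ holds as weak derivatives. One must therefore check that after the intermediate reduction, the distribution $\frac{x}{\|x\|^m}Z_k(\frac{xux}{\|x\|^2},v)$ still lies in the domain on which the final operator $R_k$ — a first-order operator whose fundamental-solution identity is already known — can be applied and still produce the point-supported distribution $\delta(x)Z_k(u,v)$. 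This is analogous to, and can be justified by, the Schur-lemma/homogeneity argument used to conclude the proof of Proposition \ref{fundamentalsolutions}; tracking it through the chain is the only real step requiring care, all other steps being algebraic consequences of Propositions \ref{prop3} and \ref{prop4}.
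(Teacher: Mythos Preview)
Your proposal is correct and follows essentially the same route as the paper's own proof: factor $\Dodd=R_k\prod_{s=1}^{j-1}B_{2s}$ via equation~(\ref{equation7}), apply Proposition~\ref{prop4} with $f_k(u)=Z_k(u,v)$ to collapse the kernel to $\frac{x}{\|x\|^m}Z_k(\frac{xux}{\|x\|^2},v)$, and then invoke the known fundamental solution $\frac{-(m+2k-2)}{(m-2)\omega_{m-1}}\frac{x}{\|x\|^m}Z_k(\frac{xux}{\|x\|^2},v)$ of $R_k$. The paper compresses all of this into two lines; your additional paragraph on the distributional justification is reasonable elaboration but not a different argument.
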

\begin{proof}
	With the help of Proposition \ref{prop4} and noticing that $$\displaystyle\frac{-(m+2k-2)}{(m-2)\omega_{m-1}}\displaystyle\frac{x}{||x||^{m}}Z_k(\frac{xux}{||x||^2},v)$$
	is the fundamental solution of $R_k$ \cite{B}, the above theorem follows immediately.
\end{proof}
Hence, Theorem \ref{theoremodd} is proved and the odd order case is resolved. \\
\par

\end{document}